\newfont{\cyr}{wncyr10 scaled 1100}
\theoremstyle{definition}
\newtheorem{defi}{Definition}[section]
\newtheorem{ass}[defi]{Assumption}
\theoremstyle{remark}
\newtheorem{rem}[defi]{Remark}
\theoremstyle{plain}
\newtheorem{lemma}[defi]{Lemma}
\newtheorem{coro}[defi]{Corollary}
\newtheorem{teo}[defi]{Theorem}
\newtheorem{prop}[defi]{Proposition}
\newcommand{\Spec}{\mathrm{Spec}}
\newcommand{\Pic}{\mathrm{Pic}}
\newcommand{\End}{\mathrm{End}}
\newcommand{\Aut}{\mathrm{Aut}}
\newcommand{\Frob}{\mathrm{Frob}}
\newcommand{\res}{\mathrm{res}}
\newcommand{\Ker}{\mathrm{Ker}}
\newcommand{\Hom}{\mathrm{Hom}}
\newcommand{\Div}{\mathrm{Div}}
\newcommand{\Gal}{\mathrm{Gal}}
\newcommand{\GL}{\mathrm{GL}}
\newcommand{\PGL}{\mathrm{PGL}}
\newcommand{\Sel}{\mathrm{Sel}}
\newcommand{\Q}{\mathbb Q}
\newcommand{\N}{\mathbb N}
\newcommand{\Z}{\mathbb Z}
\newcommand{\R}{\mathbb R}
\newcommand{\C}{\mathbb C}
\newcommand{\F}{\mathbb F}
\newcommand{\T}{\mathbb T}
\newcommand{\W}{\mathcal W}
\newcommand{\Ta}{\mathrm{Ta}}
\newcommand{\fr}{\mathfrak}
\newcommand{\cl }{\mathcal}
\newcommand{\M}{\mathrm{M}}
\newcommand{\new}{{\rm new}}
\newcommand{\old}{{\rm old}}
\newcommand{\longmono}{\mbox{$\lhook\joinrel\longrightarrow$}}
\newcommand{\longepi}{\mbox{$\relbar\joinrel\twoheadrightarrow$}}
\newcommand{\dirlim}{\mathop{\varinjlim}\limits}
\newcommand{\Sha}{\mbox{\cyr{X}}}
\begin{document}

\title[On the vanishing of Selmer groups]{On the vanishing of Selmer groups for elliptic curves over ring class fields}

\author{Matteo Longo and Stefano Vigni}
\address{Dipartimento di Matematica, Universit\`a di Milano, Via C. Saldini 50, 20133 Milano, Italy}
\email{matteo.longo1@unimi.it}
\email{stevigni@mat.unimi.it}
\subjclass[2000]{11G05, 11G40}
\keywords{elliptic curves, Selmer groups, Birch and Swinnerton-Dyer conjecture}

\begin{abstract}
Let $E_{/\Q}$ be an elliptic curve of conductor $N$ without complex multiplication and let $K$ be an imaginary quadratic field of discriminant $D$ prime to $N$. Assume that the number of primes dividing $N$ and inert in $K$ is odd, and let $H_c$ be the ring class field of $K$ of conductor $c$ prime to $ND$ with Galois group $G_c$ over $K$. Fix a complex character $\chi$ of $G_c$. Our main result is that if $L_K(E,\chi,1)\neq0$ then $\Sel_p(E/H_c)\otimes_\chi\W=0$ for all but finitely many primes $p$, where $\Sel_p(E/H_c)$ is the $p$-Selmer group of $E$ over $H_c$ and $\W$ is a suitable finite extension of $\Z_p$ containing the values of $\chi$. Our work extends results of Bertolini and Darmon to almost all non-ordinary primes $p$ and also offers alternative proofs of a $\chi$-twisted version of the Birch and Swinnerton-Dyer conjecture for $E$ over $H_c$ (Bertolini and Darmon) and of the vanishing of $\Sel_p(E/K)$ for almost all $p$ (Kolyvagin) in the case of analytic rank zero.  
\end{abstract}

\maketitle


\section{Introduction}

Let $E_{/\Q}$ be an elliptic curve of conductor $N$ without complex multiplication and denote by $f(q)=\sum_{n=1}^\infty a_nq^n$ the normalized newform of weight $2$ on $\Gamma_0(N)$ associated with $E$ by the Shimura--Taniyama correspondence.

Let $K$ be an imaginary quadratic field of discriminant $D$ prime to $N$. The extension $K/\Q$ determines a factorization
\[ N=N^+N^- \]
where a prime number $q$ divides $N^+$ (respectively, $N^-$) if and only if it is split (respectively, inert) in $K$. We make the following

\begin{ass} \label{ass1}
$N^-$ is square-free and the number of primes dividing it is \emph{odd}.
\end{ass}
Let $c$ be a positive integer prime to $ND$ and denote $\cl O_c$ the order of $K$ of conductor $c$: if $\cl O_K$ is the ring of integers of $K$ then $\cl O_c=\Z+c\cl O_K$. Let $H_c=K\bigl(j(\cl O_c)\bigr)$ be the ring class field of $K$ of conductor $c$; here $j$ is the classical $j$-function viewed as a function on lattices. The Galois group $G_c:=\Gal(H_c/K)$ of $H_c$ over $K$ is canonically isomorphic to the Picard group $\Pic(\cl O_c)=\widehat{\cl O}_c^\times\backslash\widehat K^\times/K^\times$ of $\cl O_c$ via class field theory. (For any ring $A$, the symbol $\widehat A$ denotes $A\otimes_\Z\widehat\Z$ where $\widehat\Z$ is the profinite completion of $\Z$.)

Write $\widehat G_c$ for the group of complex-valued characters of $G_c$, fix $\chi\in\widehat G_c$ and denote $L_K(f,\chi,s)$ the twist by $\chi$ of the $L$-function $L_K(f,s)=L_K(E,s)$ of $f$ over $K$. Since $c$ is prime to $ND$, it follows from Assumption \ref{ass1} that the sign of the functional equation of $L_K(f,\chi,s)$ is $+1$.

Now let $\Z[\chi]$ be the cyclotomic subring of $\C$ generated over $\Z$ by the values of $\chi$. For any prime number $p$ choose a prime ideal $\fr p$ of $\Z[\chi]$ containing $p$, and denote $\W$ the completion of $\Z[\chi]$ at $\fr p$. If $M$ is a $\Z[G_c]$-module, write $M\otimes_\chi\W$ for the tensor product of $M$ and $\W$ over $\Z[G_c]$, where the $\Z[G_c]$-module structure of $\W$ is induced by $\chi$.

Let $\Sel_p(E/H_c)$ and $\Sha_p(E/H_c)$ be the $p$-Selmer and the $p$-Shafarevich--Tate group of $E$ over $H_c$, respectively. The main result of our paper is the following

\begin{teo} \label{thm-intro-2}
If $L_K(f,\chi,1)\neq 0$ then
\[ \Sel_p(E/H_c)\otimes_\chi\W=0,\qquad \Sha_p(E/H_c)\otimes_\chi\W=0 \]
for all but finitely many primes $p$ and a suitable choice of $\fr p$.
\end{teo}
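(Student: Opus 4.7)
The plan is to adapt the Bertolini--Darmon Euler-system argument, based on Gross points on definite Shimura sets and level-raising congruences, to the present setting while weakening the ordinariness hypothesis at $p$. Assumption \ref{ass1} forces the quaternion algebra $B/\Q$ ramified exactly at the primes dividing $N^-$ and at $\infty$ to be \emph{definite}, so the analytic input is not a Heegner point but a theta-like element $\theta_\chi$ built from the $\chi$-component of a weighted sum of Gross points on the finite Shimura set attached to $B$ and an Eichler order of level $N^+$. Gross's special value formula identifies $|\theta_\chi|^2$ with $L_K(f,\chi,1)$ up to explicit non-zero constants, so the hypothesis $L_K(f,\chi,1)\neq 0$ is equivalent to $\theta_\chi\neq 0$ in the appropriate $\W$-module $\cl M\otimes_\chi\W$ of $\chi$-equivariant functions on the Shimura set.

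First I would discard the finitely many primes $p$ for which something can go wrong: $\bar\rho_{E,p}$ fails to be surjective (automatic for $p\gg 0$ by Serre), $p$ divides $6\cdot c\cdot D\cdot N\cdot h_K$ or any relevant Tamagawa or congruence number, $p$ ramifies in $\Z[\chi]$, or the reduction of $\theta_\chi$ modulo some prime $\fr p\mid p$ of $\W$ vanishes. For any remaining $p$ one fixes $\fr p$ so that $\bar\theta_\chi\in\cl M\otimes_\chi(\W/\fr p\W)$ is non-zero. The heart of the construction is then to produce, for each \emph{Kolyvagin prime} $\ell$ --- inert in $K$, prime to $pcND$, and satisfying $\ell+1\equiv\pm a_\ell\pmod p$ --- a cohomology class
\[
\kappa(\ell,\chi)\in H^1\bigl(H_c,E[p]\bigr)\otimes_\chi\W
\]
by transferring $\theta_\chi$ across a Jochnowitz-type level-raising congruence onto the $p$-torsion of the Jacobian of the indefinite Shimura curve of level $(N^+,N^-\ell)$ and then taking the Kummer image of a CM divisor coming from an optimal embedding of $\cl O_c$. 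The existence of infinitely many admissible $\ell$ follows from Chebotarev combined with Ribet's level-raising theorem.

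Next, I would run a Kolyvagin descent. By construction each $\kappa(\ell,\chi)$ is unramified away from $\ell$, and its localization at a prime of $H_c$ above $\ell$ is, up to units, the reduction of $\theta_\chi$ modulo $\fr p$, which is non-zero by the choice of $p$. Using surjectivity of $\bar\rho_{E,p}$ and Chebotarev, any putative non-zero class $s\in\Sel_p(E/H_c)\otimes_\chi\W$ can be paired non-trivially against some $\kappa(\ell,\chi)$ via local Tate duality; global reciprocity then yields a contradiction and forces $\Sel_p(E/H_c)\otimes_\chi\W=0$. The vanishing of the Shafarevich--Tate component follows since $\Sha_p(E/H_c)\otimes_\chi\W$ is a quotient of $\Sel_p(E/H_c)\otimes_\chi\W$.

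The expected main obstacle, and the place where the argument must depart from Bertolini--Darmon, is the verification of the local Selmer condition at primes of $H_c$ above $p$ when $E$ is non-ordinary at $p$. In the ordinary case the canonical subgroup of $E[p]$ splits off the local representation at $p$, making it transparent that the level-raised classes land in the $p$-Selmer subspace. In the supersingular (or more generally non-ordinary) case no such filtration exists, and I would instead verify the local condition intrinsically from the Kummer-map definition of $\Sel_p$, by realizing $\kappa(\ell,\chi)$ as the Kummer class of a global CM point and transporting the local verification at $p$ through the level-raising congruence. Making this compatibility robust for almost all non-ordinary $p$, and doing so uniformly in $\fr p$, is the technical heart of the argument and the source of the new input required beyond the classical ordinary-prime framework.
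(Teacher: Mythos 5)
Your outline reproduces the architecture of the paper's proof faithfully: Gross/Waldspurger special value formula to translate the nonvanishing of $L_K(f,\chi,1)$ into nonvanishing of the algebraic theta element modulo $\fr p$; exclusion of the finitely many bad primes via Serre's open image theorem and the other conditions; Kolyvagin/admissible primes $\ell$; Ribet level raising onto the indefinite Shimura curve $X^{(\ell)}$ of discriminant $N^-\ell$; the \v Cerednik--Drinfeld explicit reciprocity law equating the singular residue of the Euler class $\kappa(\ell)$ at $\ell$ with the reduction of $\cl L(f,\chi)$; and the global duality/Chebotarev annihilation argument. You also correctly isolate the single genuinely non-classical point: verifying that the level-raised classes satisfy the local Selmer condition at primes above $p$ when $E$ is not ordinary, where one cannot invoke the unipotent/ordinary filtration of $E[p]$ as a $G_{\Q_p}$-module.

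However, you leave that key point unresolved and your sketch of how to settle it --- ``realizing $\kappa(\ell,\chi)$ as the Kummer class of a global CM point and transporting the local verification at $p$ through the level-raising congruence'' --- glosses over the actual difficulty. The class $\kappa(\ell)$ is the image, under the level-raising projection $\pi\colon J^{(\ell)}[p]\twoheadrightarrow E[p]$, of a Kummer class for $J^{(\ell)}$; it is automatic that this Kummer class lies in the finite part of $H^1(F_\fr p,J^{(\ell)}[p])$, but this does \emph{not} immediately place its image under $\pi_*$ in the finite part of $H^1(F_\fr p,E[p])$, since $\pi$ is only a Galois-module map between generic fibers. The paper's new input is precisely the bridge here: Proposition \ref{prop-flat} identifies $\mathrm{Im}(\delta_\fr p)$ with the flat cohomology group $H^1_{fl}(\cl O_{F_\fr p},\cl A[p])$ of the Néron model's $p$-torsion group scheme (using Lang's lemma to kill $H^1_{fl}(\cl O_{F_\fr p},\cl A)$), and Proposition \ref{prop-kummer} then invokes Raynaud's theorem on prolongations of finite flat $p$-group schemes (valid because the absolute ramification index satisfies $e<p-1$, which holds since $p\nmid cD$) to lift $\pi$ uniquely to a morphism $\tilde\pi_\fr p\colon\cl J^{(\ell)}_\fr p[p]\to\cl E_\fr p[p]$ of integral models, whence the image of $\kappa(\ell)$ under $\pi_*$ lands in the flat part and hence in $\mathrm{Im}(\delta_\fr p)$ by functoriality. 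Without this flat-cohomology step your argument does not close, so the gap is genuine, though your diagnosis of where the difficulty lies is exactly right.
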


By purely algebraic considerations, we also show that Theorem \ref{thm-intro-2} yields the corresponding statement with $p$
replaced by $p^n$ for all $n\geq1$. The primes $p$ for which Theorem \ref{thm-intro-2} does not possibly hold are those not satisfying Assumption \ref{ass}. In particular, the set of such primes contains the primes of bad reduction for $E$ and those dividing the algebraic part $\cl L(f,\chi)$ of the special value $L_K(f,\chi,1)$, which is defined in Section \ref{section-GZ-formula} and is not zero because $L_K(f,\chi,1)$ is assumed to be non-zero. Furthermore, for primes $p$ for which Theorem \ref{thm-intro-2} holds the ideal $\fr p$ is chosen at the beginning of Section \ref{algebraic-sec} in such a way that $\cl L(f,\chi)$ is non-zero in $\Z[\chi]/\fr p$. 

Analogous results were previously obtained by Bertolini and Darmon
\begin{itemize}
\item for the finitely many primes $p$ of multiplicative reduction for $E$ which are inert in $K$ (in \cite[Theorem B]{BD-annals-146});
\item for infinitely many primes $p$ of \emph{ordinary} reduction for $E$ and $\chi$ of $p$-power conductor (in \cite[Corollary 4]{BD}).
\end{itemize}
It is important to remark that our Theorem \ref{thm-intro-2} does not \emph{a priori} exclude the case where $p$ is a prime of good \emph{supersingular} reduction for $E$, thus covering infinitely many $p$ not considered in \cite[Theorem B]{BD-annals-146} or \cite[Corollary 4]{BD}. The simple, yet crucial, observation which allows us to treat these cases as well is the following: if $F$ is a finite extension of $\Q_p$ and $A$ is an abelian variety defined over $F$ with good reduction then the image of the local Kummer map
\[ \delta:A(F)/pA(F)\;\longmono\;H^1(F,A[p]) \]
can be controlled by means of suitable flat cohomology groups (see \S \ref{flat-subsec}). In an Iwasawa-theoretic context, a similar approach was also adopted by Knospe in \cite{Kno}. The reader may wish to consult the paper \cite{DI} by Darmon and Iovita for related results on Iwasawa's Main Conjecture for elliptic curves in the supersingular case.

Now we would like to describe two interesting consequences of Theorem \ref{thm-intro-2}. First of all, the group $G_c$ acts naturally on the Mordell--Weil group $E(H_c)$, and $E(H_c)\otimes_\Z\C$ can be decomposed into a direct sum of eigenspaces under the induced action. Explicitly,
\[ E(H_c)\otimes\C=\bigoplus_{\chi\in\widehat G_c}E(H_c)^\chi \]
where $\widehat G_c$ is the group of complex-valued characters of $G_c$ and
\[ E(H_c)^\chi:=\bigl\{x\in E(H_c)\otimes\C\mid\text{$\sigma(x)=\chi(\sigma)x$ for all $\sigma\in G_c$}\bigr\}. \]
As explained in Sections \ref{algebraic-sec} and \ref{selmer-sec}, as a corollary of Theorem \ref{thm-intro-2} we get the following result on the vanishing of $E(H_c)^\chi$.
\begin{teo}[Bertolini--Darmon] \label{thm-intro}
If $L_K(f,\chi,1)\neq 0$ then $E(H_c)^\chi=0$.
\end{teo}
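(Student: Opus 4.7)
The plan is to deduce this statement from Theorem \ref{thm-intro-2} by combining the Kummer descent exact sequence with a comparison between $p$-adic and complex coefficients. The key observation is that the $G_c$-equivariant injection
\[ E(H_c)/pE(H_c)\;\longmono\;\Sel_p(E/H_c), \]
supplied by the descent sequence
\[ 0\longrightarrow E(H_c)/pE(H_c)\longrightarrow\Sel_p(E/H_c)\longrightarrow\Sha_p(E/H_c)\longrightarrow 0, \]
transfers the hypothesis of Theorem \ref{thm-intro-2} into a $\chi$-twisted vanishing on the Mordell--Weil side, from which the complex eigenspace $E(H_c)^\chi$ can be controlled.

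To begin, I would choose a prime $p$ satisfying the conclusion of Theorem \ref{thm-intro-2} (for a suitable ideal $\fr p$) and such that, in addition, $p\nmid|G_c|$; since both exclusion conditions remove only finitely many primes, such a $p$ exists. Under the coprimality assumption, the group ring $\W[G_c]$ is semisimple and the idempotent
\[ e_\chi:=\frac{1}{|G_c|}\sum_{\sigma\in G_c}\chi(\sigma^{-1})\sigma\in\W[G_c] \]
is well defined. An elementary verification yields, for any $\Z[G_c]$-module $M$, a canonical identification $M\otimes_\chi\W=e_\chi(M\otimes_\Z\W)$ and shows that the functor $M\mapsto M\otimes_\chi\W$ is exact, since $\W$ is a direct summand of $\W[G_c]$ via $e_\chi$.

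Applying this exact functor to the $G_c$-equivariant injection above and invoking Theorem \ref{thm-intro-2} gives
\[ \bigl(E(H_c)/pE(H_c)\bigr)\otimes_\chi\W=0, \]
that is, $(E(H_c)\otimes_\chi\W)/p=0$. Since $E(H_c)$ is finitely generated by the Mordell--Weil theorem and $\W$ is a complete local ring with maximal ideal containing $p$, Nakayama's lemma forces $E(H_c)\otimes_\chi\W=0$, and hence $e_\chi(E(H_c)\otimes_\Z\W)=0$.

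The final step is the transfer from $\W$ to $\C$. Let $F:=\W\otimes_\Z\Q$ be the fraction field of $\W$: extending scalars gives $e_\chi(E(H_c)\otimes_\Z F)=0$. The flat embedding $\Q(\chi)\hookrightarrow F$ induced by the chosen $\fr p$ makes $E(H_c)\otimes_\Z\Q(\chi)$ inject into $E(H_c)\otimes_\Z F$, and $e_\chi$ commutes with this base change, whence $e_\chi(E(H_c)\otimes_\Z\Q(\chi))=0$. Tensoring finally with $\C$ over $\Q(\chi)$ via the natural embedding $\Q(\chi)\hookrightarrow\C$ yields $E(H_c)^\chi=e_\chi(E(H_c)\otimes_\Z\C)=0$. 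The only genuinely delicate point in the whole argument is this comparison of coefficients; everything else is a routine combination of Theorem \ref{thm-intro-2}, Nakayama's lemma, and the standard linear algebra of semisimple group algebras.
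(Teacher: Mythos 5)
Your proposal is correct and follows the same overall skeleton as the paper's argument: reduce $E(H_c)^\chi$ to the statement $E(H_c)\otimes_\chi\W=0$, pass to $(E(H_c)/pE(H_c))\otimes_\chi\W$ via Nakayama, and then inject this into $\Sel_p(E/H_c)\otimes_\chi\W$ using the flatness of $\W$ over $\Z[G_c]$. There are two points where you proceed differently from the paper, both in the auxiliary algebra rather than in the logical structure. First, to get flatness (exactness of $-\otimes_\chi\W$) the paper invokes a theorem of Benson and Goodearl (Lemma \ref{alg-2}), while you derive it directly from the explicit idempotent $e_\chi$: since $p\nmid|G_c|$ and $\Z[\chi]\subset\W$, the element $e_\chi$ is a genuine idempotent in $\W[G_c]$, and $\W_\chi\cong e_\chi\W[G_c]$ is a direct summand of the free $\Z[G_c]$-module $\W[G_c]$, hence flat. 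This is more self-contained. (One nitpick: $\W[G_c]$ is \emph{not} a semisimple ring, since $\W$ is a complete DVR with $p\neq0$ in its Jacobson radical; but this mislabelling is harmless, as all you actually use is that $e_\chi$ is an idempotent.) Second, to pass from $\W$-coefficients to $\C$-coefficients the paper fixes a (highly non-canonical) field isomorphism $\C_p\simeq\C$ that is the identity on $\Z[\chi]$, and uses it in \eqref{tensor-can-eq} together with the Maschke-type Proposition \ref{alg-1}. You instead route through the number field $\Q(\chi)$, which embeds canonically into both $\mathrm{Frac}(\W)$ and $\C$, and carry $e_\chi$ along these flat base changes. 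This avoids the choice of an abstract isomorphism of algebraically closed fields and is arguably cleaner, though it buys no extra generality. Also note that $p\nmid|G_c|=h(c)$ is already part of Assumption \ref{ass}, so your added hypothesis is redundant rather than an additional restriction. In all other respects your argument matches the paper's Proposition \ref{alg-4}.
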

This is the $\chi$-twisted conjecture of Birch and Swinnerton-Dyer for $E$ over $H_c$ in the case of analytic rank zero and was previously established by Bertolini and Darmon in \cite[Theorem B]{BD-annals-146}. We remark that if $p$ is a prime of ordinary reduction for $E$ satisfying arithmetic conditions analogous to those in Assumption \ref{ass} and $\chi$ is an anticyclotomic character of $p$-power conductor then the statement of Theorem \ref{thm-intro} can also be found in \cite[Corollary 4]{BD}.

The second by-product of Theorem \ref{thm-intro-2} we want to mention is the following. By specializing Theorem \ref{thm-intro-2} to the trivial character of $G_c$, one can obtain a vanishing result for the $p$-Selmer groups of $E$ \emph{over} $K$. More precisely, we offer an alternative proof of
\begin{teo}[Kolyvagin] \label{sel-K-intro-teo}
If $L_K(E,1)\not=0$ then the Mordell--Weil group $E(K)$ is finite and $\Sel_p(E/K)=\Sha_p(E/K)=0$ for all but finitely many primes $p$.
\end{teo}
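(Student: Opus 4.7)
The plan is to specialize Theorem \ref{thm-intro-2} to the case $c=1$ and $\chi$ the trivial character of $G_1=\Gal(H_1/K)$, with $H_1$ the Hilbert class field of $K$. Since $L_K(f,\mathbf{1},s)=L_K(E,s)$, the hypothesis $L_K(E,1)\neq 0$ is exactly what Theorem \ref{thm-intro-2} requires. For the trivial character, $\Z[\chi]=\Z$ and $\W=\Z_p$, and $\Sel_p(E/H_1)\otimes_{\mathbf{1}}\Z_p$ is naturally the $G_1$-coinvariant quotient $\bigl(\Sel_p(E/H_1)\bigr)_{G_1}$ (tensoring with $\Z_p$ is trivial on a $p$-primary group). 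Hence Theorem \ref{thm-intro-2} delivers $\bigl(\Sel_p(E/H_1)\bigr)_{G_1}=0$ for all but finitely many $p$.

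Next I would transfer this vanishing from $H_1$ down to $K$. Excluding the finitely many primes dividing $|G_1|=|\Pic(\cl O_K)|$, the order $|G_1|$ is invertible on the $p$-primary module $\Sel_p(E/H_1)$, so the norm map identifies $G_1$-invariants with $G_1$-coinvariants: $\Sel_p(E/H_1)^{G_1}\cong\bigl(\Sel_p(E/H_1)\bigr)_{G_1}=0$. The same numerical condition makes $H^i(G_1,E(H_1)[p])$ vanish for $i\geq 1$, so inflation--restriction yields an isomorphism $H^1(K,E[p])\xrightarrow{\sim}H^1(H_1,E[p])^{G_1}$. Because $H_1/K$ is everywhere unramified, a place-by-place check shows that the local Selmer conditions are preserved under restriction, yielding an injection $\Sel_p(E/K)\hookrightarrow\Sel_p(E/H_1)^{G_1}=0$, and therefore $\Sel_p(E/K)=0$ for almost all $p$.

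The Kummer exact sequence
\[ 0\longrightarrow E(K)/pE(K)\longrightarrow \Sel_p(E/K)\longrightarrow \Sha(E/K)[p]\longrightarrow 0 \]
then forces $E(K)/pE(K)=0$ and $\Sha(E/K)[p]=0$ for almost all $p$. By the Mordell--Weil theorem $E(K)\cong\Z^r\oplus T$ with $T$ finite; choosing any excluded $p$ with $p\nmid|T|$ forces $r=0$, so $E(K)$ is finite. To upgrade $\Sha(E/K)[p]=0$ to the vanishing of the entire $p$-primary component $\Sha_p(E/K)$, I would appeal to the strengthening announced in the introduction with $p^n$ in place of $p$: this gives $\Sha(E/K)[p^n]=0$ for every $n\geq 1$, whence $\Sha_p(E/K)=0$.

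The most delicate step is the place-by-place compatibility of local Selmer conditions under the restriction map $H^1(K_v,E[p])\to\prod_{w\mid v}H^1((H_1)_w,E[p])$. At primes of good reduction this should be straightforward since $H_1/K$ is unramified and $p\nmid|G_1|$; the finitely many primes of bad reduction, together with those dividing $|G_1|$ or $|T|$, are simply absorbed into the exceptional set inherited from Theorem \ref{thm-intro-2}.
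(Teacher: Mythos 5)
Your proposal is correct and follows essentially the same route as the paper's own proof (Theorem \ref{sel-K-vanishing-teo} together with Lemmas \ref{sel-p-inj-lem} and \ref{sel-tensor-lem}): specialize Theorem \ref{thm-intro-2} to $c=1$ and the trivial character, pass from coinvariants to invariants of $\Sel_p(E/H_1)$ (you use the norm map, the paper uses flatness of $\Z_p$ over $\Z[G_1]$, which amounts to the same thing since $p\nmid h(1)$), and descend to $K$ by inflation--restriction. The only cosmetic differences are that the paper kills the inflation--restriction kernel via $E(H_1)[p]=0$ rather than $p\nmid|G_1|$, and that in the paper's notation $\Sha_p(E/K)$ denotes the $p$-torsion $\Sha(E/K)[p]$, so your final ``upgrade'' to the $p$-primary part is not required (and in any case is automatic, since an abelian group with trivial $p$-torsion has trivial $p^n$-torsion for all $n$).
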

We attribute this result to Kolyvagin because it is a formal consequence of his proof of the conjectures of Birch and Swinnerton-Dyer and of Shafarevich and Tate for $E_{/K}$ (see Theorem \ref{kol-teo} for details), which were obtained by using his theory of Euler systems of Heegner points in rank one. However, our proof of Theorem \ref{sel-K-intro-teo} is new, since it uses neither known cases of the conjecture of Shafarevich and Tate nor auxiliary results for elliptic curves in rank one (in other words, it takes place ``entirely in rank zero'').\\ 

The methods used in the proof of Theorem \ref{thm-intro-2} are inspired by those of \cite{BD}. However, no techniques in Iwasawa theory are used in the course of our arguments, while a crucial role is played instead by a detailed study of the linear algebra of Galois cohomology groups of $E$ viewed as modules over $\Z_p[G_c]$ and $\F_p[G_c]$ (Sections \ref{algebraic-sec} and \ref{selmer-sec}). 

As hinted at in the lines above, the main new ingredient in our strategy is the use of flat cohomology to describe the image of the local Kummer maps above $p$ (Proposition \ref{prop-flat}). This approach can be effectively combined with classical results of Raynaud on $p$-torsion group schemes in order to control the local behaviour at primes of good reduction for $E$ of certain cohomology classes coming from points on the jacobians of suitable auxiliary Shimura curves (Proposition \ref{prop-kummer}). In particular, we do not need to require any ordinariness condition at $p$, contrary to what done, e.g., in \cite{BD}. However, we warn the reader that all this works fine under an assumption of ``low'' ramification in $p$ which is certainly satisfied in our case once we ask that $p\nmid c$ but fails to hold in other significant arithmetic contexts (for instance, when one deals with $\Z_p$-extensions of number fields). 

We expect that our approach to vanishing results for Selmer groups can be extended and fruitfully applied also to the context of real quadratic fields and Stark--Heegner points, as introduced by Darmon in \cite{da}: we plan to turn to this circle of ideas in a future project.

To conclude this introduction, we point out that the $\chi$-twisted, rank one situation was dealt with by Bertolini and Darmon in \cite{BD-RCF}. More precisely, building on the techniques of Kolyvagin (see, e.g., \cite{G2}, \cite{ko}, \cite{ru}), Bertolini and Darmon showed that the $\chi$-eigenspace $E(H_c)^\chi$ of the Mordell--Weil group $E(H_c)$ is one-dimensional over $\C$ if the projection onto $E(H_c)^\chi$ of a certain Heegner point is non-zero. Recently, this result has been largely extended by Nekov\'a\v{r} in \cite{Ne}, where the author covers the more general case of abelian varieties which are simple quotients of jacobians of Shimura curves associated to indefinite quaternion algebras over totally real number fields. We also remark that the main result of \cite{Ne} is one of the ingredients in the proof by Nekov\'a\v{r} of the parity conjecture for Selmer groups of Hilbert modular forms (see \cite[Ch. 12]{Ne-selmer}). 

\vskip 2mm

\noindent\emph{Notation and conventions.} Throughout our work we fix an algebraic closure $\bar\Q$ of $\Q$ and view all number fields as subfields of $\bar\Q$. If $F$ is a number field we write $\cl O_F$ and $G_F$ for the ring of integers and the absolute Galois group $\Gal(\bar\Q/F)$ of $F$, respectively. Moreover, for all primes $\ell$ we choose an algebraic closure $\bar\Q_\ell$ of $\Q_\ell$ and denote $\C_\ell$ its completion.

If $\ell$ is a prime then $\F_\ell$ and $\F_{\ell^2}$ are the finite fields with $\ell$ and $\ell^2$ elements, respectively. We often write $\F_p$ in place of $\Z/p\Z$ when we want to emphasize the field structure of $\Z/p\Z$.

For any ring $R$ and any pair of maps $f:M\rightarrow N$, $g:P\rightarrow Q$ of $R$-modules we write $f\otimes g:M\otimes_RP\rightarrow N\otimes_RQ$ for the $R$-linear map obtained by extending additively the rule $m\otimes p\mapsto f(m)\otimes g(p)$. Finally, for any map $f:M\rightarrow N$ of $R$-modules and any $R$-algebra $S$ the map $f\otimes 1:M\rightarrow N\otimes_RS$ is defined by $m\mapsto f(m)\otimes 1$.

\vskip 2mm

\noindent\emph{Acknowledgements.} We would like to thank Massimo Bertolini and Adrian Iovita for useful discussions and comments on some of the topics of this paper. We are also grateful to Marco Seveso for a careful reading of an earlier version of the article and to the anonymous referee for valuable remarks and suggestions.  

\section{Consequences of the Gross--Zhang formula} \label{section-GZ-formula}

Let $B$ be the definite quaternion algebra over $\Q$ of discriminant $N^-$, and let $R\subset B$ be an Eichler order of level $N^+$. Fix an optimal embedding $\psi:K\hookrightarrow B$ of $\cl O_c$ into $R$, that is, an injective $\Q$-algebra homomorphism of $K$ into $B$ such that $\psi(\cl O_c)=\psi(K)\cap R$. Extend $\psi$ to a homomorphism
\[ \widehat\psi:\widehat{\cl O}_c^\times\backslash\widehat K^\times/K^\times\longrightarrow\widehat R^\times\backslash\widehat B^\times/B^\times \]
in the obvious way. By Jacquet--Langlands theory, the modular form $f$ can be associated to a function
\[ \phi=\phi(f):\widehat R^\times\backslash\widehat B^\times/B^\times\longrightarrow\Z \]
with the same eigenvalues of $f$ under the action of Hecke operators $T_q$ for primes $q\nmid N$, where the action of Hecke operators on $\widehat B^\times$ is defined by double coset decomposition. Define the \emph{algebraic part} of the special value $L_K(f,\chi,1)$ of $L_K(f,\chi,s)$ to be
\[ \cl L(f,\chi):=\sum_{\sigma\in G_c}\chi^{-1}(\sigma)\bigl(\phi\circ\widehat\psi\,\bigr)(\sigma)\in\Z[\chi].\]
Of course, $\cl L(f,\chi)$ depends on the choice of $\psi$ but, since this embedding will remain fixed throughout our work, the notation will not explicitly reflect this dependence.
\begin{teo} \label{Gross-formula}
The special value $L_K(f,\chi,1)$ is non-zero if and only if $\cl L(f,\chi)$ is non-zero.
\end{teo}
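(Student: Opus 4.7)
The plan is to deduce the statement from the Gross--Zhang special value formula (in the generalization due to Zhang, and in the ring class character setting also treated by Daghigh and Popa), which under hypotheses matching Assumption \ref{ass1} expresses the central critical value $L_K(f,\chi,1)$ as the product of a strictly positive explicit constant and the absolute square of the very quaternionic period that defines $\mathcal{L}(f,\chi)$.

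First, I would unpack the definition and recognize $\mathcal{L}(f,\chi)$ as a toric/CM period of the Jacquet--Langlands transfer $\phi$ of $f$. Indeed, class field theory identifies $G_c$ with $\Pic(\mathcal{O}_c) = \widehat{\mathcal{O}}_c^\times\backslash\widehat{K}^\times/K^\times$, and the optimal embedding $\psi$ sends this identification to a $G_c$-orbit of CM points (``Gross points'') in $\widehat{R}^\times\backslash\widehat{B}^\times/B^\times$; the sum $\mathcal{L}(f,\chi)=\sum_{\sigma\in G_c}\chi^{-1}(\sigma)(\phi\circ\widehat\psi)(\sigma)$ is then precisely the $\chi$-twisted period of $\phi$ along this toric orbit.

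Second, I would invoke the Gross--Zhang formula in the appropriate form. Assumption \ref{ass1} guarantees two things: the global root number of $L_K(f,\chi,s)$ is $+1$, and the local components of the automorphic representation of $\GL_2$ attached to $f$ admit a Jacquet--Langlands transfer to the definite quaternion algebra $B$ of discriminant $N^-$; moreover $c$ is coprime to $ND$, so the Eichler order $R$ of level $N^+$ and the optimal embedding $\psi$ are in the range of the formula. Under these hypotheses one has an identity of the shape
\[
L_K(f,\chi,1) \;=\; C(f,\chi,c,D)\cdot \mathcal{L}(f,\chi)\cdot\overline{\mathcal{L}(f,\chi)},
\]
where $C(f,\chi,c,D)$ is a non-zero real number built from the Petersson norm of $f$ (or equivalently $\langle\phi,\phi\rangle$), from $\sqrt{|D|}\,c$, and from elementary archimedean factors. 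Since the $\chi$-conjugate of $\mathcal{L}(f,\chi)$ equals $\mathcal{L}(f,\chi^{-1})=\overline{\mathcal{L}(f,\chi)}$ and $C(f,\chi,c,D)\neq 0$, vanishing of either side of the identity forces vanishing of the other, which gives the desired equivalence.

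The main obstacle is bookkeeping rather than mathematics: one must reconcile normalizations (Petersson inner products, level structures, the exact shape of the quaternionic measure, the choice of local test vectors) between the version of the formula stated in the literature and the combinatorial definition of $\mathcal{L}(f,\chi)$ used here; this is what justifies the non-vanishing of the explicit constant $C(f,\chi,c,D)$ in our particular setup. Once that matching is carried out the theorem follows immediately, and no additional input beyond the Gross--Zhang formula is required.
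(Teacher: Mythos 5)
Your proposal is correct and follows essentially the same route as the paper, whose proof simply cites the Waldspurger/Gross--Prasad nonvanishing criterion (via Vatsal) together with the explicit Gross--Zhang special value formula $L_K(f,\chi,1)\doteq|\cl L(f,\chi)|^2$. You have merely spelled out the content of those references in more detail.
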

\begin{proof} The result in this form follows from \cite{W} and \cite{GP}: see \cite[Theorem 6.4]{V} for details. Explicit formulas relating $L_K(f,\chi,1)$ and $\cl L(f,\chi)$ can be found in \cite{G} in a special case and in \cite{Z} in the greatest generality. \end{proof}

\section{Local cohomology and Selmer groups}

In this section we introduce Selmer groups. Since we will consider torsion modules of cardinality divisible by the residue
characteristic of (some of) the local fields, in order to study local conditions we need to use both Galois and flat cohomology.

\subsection{Classical Selmer groups} \label{classical-selmer-subsec}

Let $F$ be a number field. For any prime number $q$ denote $F_q$ the $q$-adic completion $F\otimes_\Q\Q_q$ of $F$, so that $F_q$ is isomorphic to the product $\prod_{\fr q\mid q}F_\fr q$ of the completions $F_\fr q$ of $F$ at the prime ideals $\fr q$ of $\cl O_F$ above $q$. Moreover, write $G_{F_\fr q}$ for a decomposition group of $G_F$ at $\fr q$ (this amounts to choosing an algebraic closure $\bar F_{\fr q}$ of $F_{\fr q}$ and an embedding $\bar\Q\hookrightarrow\bar F_{\fr q}$) and $I_{F_\fr q}$ for the inertia subgroup of $G_{F_\fr q}$, and set $G_{F_q}:=\prod_{\fr q|q}G_{F_\fr q}$ and $I_{F_q}:=\prod_{\fr q|q}I_{F_\fr q}$.

Let $A_{/F}$ be an abelian variety defined over a number field $F$. If $p$ is a prime number write $A[p^n]$ for the $p^n$-torsion subgroup of $A(\bar\Q)$, so $A[p^n]\simeq(\Z/p^n\Z)^{2d}$ where $d$ is the dimension of $A$. Let now $p$ and $q$ be (possibly equal) primes and let $n$ be a positive integer. As customary, set
\[ H^1(F,A[p^n]):=H^1(G_F,A[p^n]),\qquad H^1(F_\fr q,A[p^n]):=H^1(G_{F_\fr q},A[p^n]) \]
and
\[ H^1(F_q,A[p^n]):=\prod_{\fr q|q}H^1(F_\fr q,A[p^n]),\qquad H^1(I_{F_q},A[p^n]):=\prod_{\fr q|q}H^1(I_{F_\fr q},A[p^n]). \]
(Here $H^1(G,M)$ denotes the first (continuous) cohomology group of the profinite group $G$ with values in the $G$-module $M$.) Let
\[ \res_\fr q:H^1(F,A[p^n])\longrightarrow H^1(F_\fr q,A[p^n]) \]
be the restriction map at $\fr q$ and define $\res_q:=\prod_{\fr q\mid q}\res_\fr q$. Likewise, let
\[ \delta_\fr q:A(F_\fr q)\longrightarrow H^1(F_\fr q,A[p^n]) \]
denote the Kummer map and define $\delta_q:=\prod_{\fr q\mid q}\delta_{\fr q}$. As usual, the $p^n$-Selmer group of $A$ over $F$ is defined as
\[ \begin{split}
   \Sel_{p^n}(A/F)&:=\ker\Big(H^1(F,A[p^n])\longrightarrow\displaystyle{\prod_{\fr q}} H^1\bigl(F_{\fr q},A(\bar F_\fr q)\bigr)[p^n]\Big)\\
   &\;=\bigl\{s\in H^1(F,A[p^n])\mid\text{$\res_q(s)\in{\rm Im}(\delta_q)$ for all primes $q$}\bigr\}.
   \end{split} \]
Finally, say that a prime $q$ is of \emph{good reduction} for $A$ if for all prime ideals $\fr q$ of $\cl O_F$ above $q$ the base-changed abelian variety $A_\fr q:=A\times_FF_\fr q$ has good reduction.

\subsection{Finite and singular cohomology groups}

As in \cite[\S 2.2]{BD}, we introduce the following finite/singular structures on local cohomology groups. Let $p$ and $q$ be \emph{distinct} primes and suppose that $A$ has good reduction at $q$. Define the \emph{singular part} of $H^1(F_q,A[p])$ as
\[ H^1_{\rm sing}(F_q,A[p]):=H^1(I_{F_q},A[p])^{G_{F_q}/I_{F_q}}, \]
and define the \emph{finite part} $H^1_{\rm fin}(F_q,A[p])$ via the inflation-restriction exact sequence
\[ 0\longrightarrow H^1_{\rm fin}(F_q,A[p])\longrightarrow H^1(F_q,A[p])\longrightarrow H^1_{\rm sing}(F_q,A[p]).\]
The next proposition recalls a well-known cohomological result.

\begin{prop} \label{prop-fin-sing}
Let $p$, $q$ be distinct primes and suppose that $A$ has good reduction at $q$. Then ${\rm Im}(\delta_q)=H^1_{\rm fin}(F_q,A[p])$.
\end{prop}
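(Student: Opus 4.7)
\emph{Proof plan.} Since $F_q = \prod_{\fr q\mid q}F_\fr q$ and the Kummer, restriction, and local cohomology groups at $q$ all decompose as products over the primes $\fr q\mid q$ of $\cl O_F$, it suffices to fix such a $\fr q$ and prove the local equality $\Image(\delta_\fr q) = H^1_{\rom{fin}}(F_\fr q, A[p])$. By the N\'eron--Ogg--Shafarevich criterion, good reduction of $A$ at $\fr q$ together with the hypothesis $p\neq q$ forces $I_{F_\fr q}$ to act trivially on $A[p]$; hence inflation--restriction identifies $H^1_{\rom{fin}}(F_\fr q, A[p])$ with $H^1(F_\fr q^{\rom{unr}}/F_\fr q, A[p])$.

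The central input is that multiplication by $p$ is surjective on $A(F_\fr q^{\rom{unr}})$. This combines two facts: the kernel of reduction, a formal group over the integers of $F_\fr q^{\rom{unr}}$, is pro-$q$ and hence $p$-divisible because $p\neq q$; and multiplication by $p$ is surjective on the reduced abelian variety $\tilde A(\bar k_\fr q)$ since $\bar k_\fr q$ is algebraically closed. Granted this, the inclusion $\Image(\delta_\fr q)\subseteq H^1_{\rom{fin}}(F_\fr q, A[p])$ is immediate: for $P\in A(F_\fr q)$ choose $Q\in A(\bar F_\fr q)$ with $pQ = P$ and $Q'\in A(F_\fr q^{\rom{unr}})$ with $pQ' = P$; then $Q - Q'\in A[p]$ trivializes the restriction to $I_{F_\fr q}$ of the Kummer cocycle $\sigma\mapsto \sigma(Q) - Q$, making that cocycle unramified.

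For the reverse inclusion I would count. The filtration by the kernel of reduction, together with $p$-divisibility of the formal-group part, gives $A(F_\fr q)/pA(F_\fr q) \cong \tilde A(k_\fr q)/p\tilde A(k_\fr q)$, and this finite quotient has the same order as $\tilde A[p](k_\fr q)$, which in turn equals $|A[p]^{\Frob_\fr q}|$ because reduction is a Galois-equivariant isomorphism on prime-to-$q$ torsion. On the cohomological side, $\Gal(F_\fr q^{\rom{unr}}/F_\fr q)$ is procyclic, topologically generated by $\Frob_\fr q$, and the Herbrand quotient of a finite module under $\widehat\Z$ is $1$, so $|H^1(F_\fr q^{\rom{unr}}/F_\fr q, A[p])| = |A[p]^{\Frob_\fr q}|$ as well. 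Since $\delta_\fr q$ is injective on $A(F_\fr q)/pA(F_\fr q)$, the coincidence of orders upgrades the inclusion above to an equality.

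The one genuinely delicate point is the surjectivity of $p$ on $A(F_\fr q^{\rom{unr}})$, since it packages both the formal-group analysis and the algebraic-closedness argument for the reduced fibre, and it is where the hypothesis $p\neq q$ is truly essential. When $p = q$ this surjectivity breaks down, the formal group being no longer $p$-divisible, and one must replace Galois cohomology with flat cohomology --- which is precisely the theme addressed in \S \ref{flat-subsec}.
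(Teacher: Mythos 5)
Your proof is correct, and it arrives at the same conclusion by a genuinely different route for the reverse inclusion. The paper's proof is shorter and more of a black-box appeal: it writes down the exact sequence
\[ A(F_{\fr q})\overset{\delta_{\fr q}}{\longrightarrow}H^1\bigl(G_{F_{\fr q}}/I_{F_{\fr q}},A[p]\bigr)\longrightarrow H^1\bigl(G_{F_{\fr q}}/I_{F_{\fr q}},A(\bar\Q_q)^{I_{F_{\fr q}}}\bigr)[p] \]
coming from the Kummer sequence over $F_\fr q^{\rm unr}$ (so the forward inclusion and the identification of ${\rm Im}(\delta_\fr q)$ with a kernel are built into the sequence), and then invokes the vanishing $H^1\bigl(G_{F_{\fr q}}/I_{F_{\fr q}},A(\bar\Q_q)^{I_{F_{\fr q}}}\bigr)=0$ from Milne's \emph{Arithmetic Duality Theorems} (Ch.\ I, Prop.\ 3.8), which is itself a consequence of Lang's theorem and the divisibility of the formal group. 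You instead establish the forward inclusion by an explicit cocycle computation and then prove the reverse inclusion by a cardinality count, identifying $|A(F_\fr q)/pA(F_\fr q)|$ and $|H^1(\widehat\Z,A[p])|$, each with $|A[p]^{\Frob_\fr q}|$, the latter via the triviality of the Herbrand quotient of a finite $\widehat\Z$-module. The underlying ingredients are the same in both proofs --- N\'eron--Ogg--Shafarevich, $p$-divisibility of $A(F_\fr q^{\rm unr})$, Lang's theorem --- but the paper packages them into a single cited vanishing statement, whereas your version unpacks them into a self-contained counting argument of the sort familiar from Gross's exposition of Kolyvagin's work. Your concluding remark on where the hypothesis $p\neq q$ enters, and why one must pass to flat cohomology when $p=q$, correctly anticipates the role of \S\ref{flat-subsec}.
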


\begin{proof} Let $\fr q$ be a prime of $F$ above $q$. Since $A$ has good reduction at $\fr q$ and $p\not=q$, there is an exact sequence
\[ A(F_{\fr q})\overset{\delta_{\fr q}}{\longrightarrow}H^1\bigl(G_{F_{\fr q}}/I_{F_{\fr q}},A[p]\bigr)\longrightarrow H^1\bigl(G_{F_{\fr q}}/I_{F_{\fr q}},A(\bar\Q_q)^{I_{F_{\fr q}}}\bigr)[p]. \]
But $H^1\bigl(G_{F_{\fr q}}/I_{F_{\fr q}},A(\bar\Q_q)^{I_{F_{\fr q}}}\bigr)=0$ by \cite[Ch. I, Proposition 3.8]{Mi2}, and our claim follows. \end{proof}

\subsection{Flat cohomology groups} \label{flat-subsec}

Fix a prime ideal $\fr p$ of $\cl O_F$ above $p$ and suppose that $A$ has good reduction at $\fr p$. Let $\cl A$ be the Néron model of $A\times_FF_\fr p$ over the ring of integers $\cl O_{F_\fr p}$ of $F_\fr p$ and denote by $\cl A[p]$ the $p$-torsion subgroup scheme of $\cl A$. View the group schemes $\cl A$ and $\cl A[p]$ as sheaves on the flat site of $\Spec(\cl O_{F_\fr p})$, and write $H^1_{fl}(\cl O_{F_\fr p},\cl A)$ and $H^1_{fl}(\cl O_{F_\fr p},\cl A[p])$ (respectively, $H^2_{\cdot}(\cl O_{F_\fr p},\cl A)$ and $H^2_{\cdot}(\cl O_{F_\fr p},\cl A[p])$) for the first flat cohomology group (respectively, the second flat cohomology group supported on the closed point of $\Spec(\cl O_{F_\fr p})$) of $\cl A$ and $\cl A[p]$ (see \cite[Ch. III]{Mi1} for the theory of sites and cohomologies on the flat site). These groups fit into the following commutative diagram with exact rows, where the injectivity of $i_\fr p$ is a consequence of \cite[Ch. III, Lemma 1.1 (a)]{Mi2}:
\begin{equation} \label{diagram-flat}
\xymatrix{0\ar[r]& H^1_{fl}(\cl O_{F_\fr p},\cl A[p])\ar[r]^-{i_\fr p}\ar[d] & H^1(F_\fr p,A[p])\ar[r]\ar[d] & H^2_{\cdot}(\cl O_{F_\fr p},\cl A[p])\ar[d]\\
& H^1_{fl}(\cl O_{F_\fr p},\cl A)\ar[r] & H^1\bigl(F_\fr p,A(\bar F_\fr p)\bigr)\ar[r] & H^2_{\cdot}(\cl O_{F_\fr p},\cl A).}
\end{equation}
Now recall the local Kummer map $\delta_\fr p:A(F_{\fr p})\rightarrow H^1(F_\fr p,A[p])$ at $\fr p$ and the exact sequence
\[ A(F_{\fr p})\overset{\delta_\fr p}{\longrightarrow}H^1(F_\fr p,A[p])\longrightarrow H^1\bigl(F_\fr p,A(\bar F_\fr p)\bigr). \]
The next result will play a crucial role in the proof of Proposition \ref{prop-kummer}.
\begin{prop} \label{prop-flat}
The map $i_\fr p$ induces an isomorphism ${\rm Im}(\delta_\fr p)\simeq H^1_{fl}(\cl O_{F_\fr p},\cl A[p])$.
\end{prop}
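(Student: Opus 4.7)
\emph{Proof plan.} The strategy is to construct an fppf ``integral Kummer sequence'' on $\Spec(\cl O_{F_\fr p})$ and to compare it, via $i_\fr p$, with the classical Galois Kummer sequence on the generic fibre. First I would consider the short exact sequence of fppf sheaves on $\Spec(\cl O_{F_\fr p})$
\[ 0\longrightarrow\cl A[p]\longrightarrow\cl A\overset{p}{\longrightarrow}\cl A\longrightarrow 0, \]
whose exactness is ensured by the fact that multiplication by $p$ on the abelian scheme $\cl A$ is finite and faithfully flat. Passing to flat cohomology gives the ``integral Kummer sequence''
\[ 0\longrightarrow\cl A(\cl O_{F_\fr p})/p\cl A(\cl O_{F_\fr p})\longrightarrow H^1_{fl}(\cl O_{F_\fr p},\cl A[p])\longrightarrow H^1_{fl}(\cl O_{F_\fr p},\cl A)[p]\longrightarrow 0, \]
and by naturality of the long exact sequence, restriction to $\Spec(F_\fr p)$ maps this short exact sequence onto the usual Galois-cohomological Kummer sequence for $A/F_\fr p$, with middle vertical arrow $i_\fr p$.

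The key input is then the vanishing $H^1_{fl}(\cl O_{F_\fr p},\cl A)=0$. Since $\cl A$ is smooth, its flat and \'etale cohomologies coincide; and since $\cl O_{F_\fr p}$ is henselian, \'etale cohomology over $\Spec(\cl O_{F_\fr p})$ is computed on the closed point, yielding $H^1_{fl}(\cl O_{F_\fr p},\cl A)\simeq H^1(k,\cl A_k)$, where $k$ is the residue field of $\cl O_{F_\fr p}$. As $A$ has good reduction at $\fr p$, the special fibre $\cl A_k$ is a connected smooth commutative algebraic group over the finite field $k$, so Lang's theorem gives $H^1(k,\cl A_k)=0$.

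Combining this vanishing with the N\'eron mapping property $\cl A(\cl O_{F_\fr p})=A(F_\fr p)$, the upper row of the comparison diagram collapses to an isomorphism $A(F_\fr p)/pA(F_\fr p)\simeq H^1_{fl}(\cl O_{F_\fr p},\cl A[p])$, and a direct diagram chase then identifies the image of $i_\fr p$ with the image of $\delta_\fr p$ inside $H^1(F_\fr p,A[p])$; since $i_\fr p$ is already known to be injective, this proves the proposition.

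The main obstacle I anticipate is precisely the vanishing $H^1_{fl}(\cl O_{F_\fr p},\cl A)=0$: its proof requires Grothendieck's smooth-descent identification of flat and \'etale cohomology for the non-finite sheaf $\cl A$, together with henselian descent to the residue field, rather than the finite-flat-group-scheme arguments used to establish the injectivity of $i_\fr p$ in diagram \eqref{diagram-flat}. Once this is granted, the rest of the argument is formal, amounting to the naturality of the Kummer sequences and the N\'eron mapping property.
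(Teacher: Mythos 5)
Your proposal is correct, but it follows a genuinely different route from the paper's. The paper works entirely inside diagram \eqref{diagram-flat}: the inclusion $i_\fr p\bigl(H^1_{fl}(\cl O_{F_\fr p},\cl A[p])\bigr)\subset{\rm Im}(\delta_\fr p)$ comes from the vanishing $H^1_{fl}(\cl O_{F_\fr p},\cl A)=0$ (Lang's lemma), and the reverse inclusion is a diagram chase using the injectivity of $H^2_{\cdot}(\cl O_{F_\fr p},\cl A[p])\rightarrow H^2_{\cdot}(\cl O_{F_\fr p},\cl A)$, which in turn rests on the second vanishing $H^1_{\cdot}(\cl O_{F_\fr p},\cl A)=0$ quoted from Mazur. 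You instead introduce the integral fppf Kummer sequence attached to $0\rightarrow\cl A[p]\rightarrow\cl A\xrightarrow{\,p\,}\cl A\rightarrow0$ (legitimate because good reduction makes $\cl A$ an abelian scheme, so $[p]$ is finite faithfully flat and hence an fppf epimorphism), combine Lang's vanishing with the N\'eron mapping property $\cl A(\cl O_{F_\fr p})=A(F_\fr p)$ to get a surjection $A(F_\fr p)/pA(F_\fr p)\twoheadrightarrow H^1_{fl}(\cl O_{F_\fr p},\cl A[p])$, and then use naturality of the connecting maps under restriction to the generic fibre to identify ${\rm Im}(i_\fr p)$ with ${\rm Im}(\delta_\fr p)$ in one stroke. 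What your route buys is that it dispenses with cohomology supported on the closed point altogether (only the single vanishing $H^1_{fl}(\cl O_{F_\fr p},\cl A)=0$ is needed), and in fact it recovers the injectivity of $i_\fr p$ for free, since the composite $A(F_\fr p)/pA(F_\fr p)\xrightarrow{\sim}H^1_{fl}(\cl O_{F_\fr p},\cl A[p])\xrightarrow{i_\fr p}H^1(F_\fr p,A[p])$ is the injective map $\delta_\fr p$; the price is having to justify the fppf exactness of multiplication by $p$ on $\cl A$ and the smooth/henselian descent for the non-finite sheaf $\cl A$, which you correctly flag as the main technical inputs.
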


\begin{proof} Since $H^1_{fl}(\cl O_{F_\fr p},\cl A)=0$ by Lang's lemma (see \cite[Théorème 11.7]{Gro} and \cite[Lemma 5.1 (vi)]{Maz}), the inclusion
\[ i_\fr p\bigl(H^1_{fl}(\cl O_{F_\fr p},\cl A[p])\bigr)\subset{\rm Im}(\delta_\fr p) \]
follows from \eqref{diagram-flat}. Since $H^1_{\cdot}(\cl O_{F_\fr p},\cl A)=0$ by \cite[Lemma 5.1 (vi)]{Maz}, the right vertical map in \eqref{diagram-flat} is injective. Hence if $x$ belongs to the kernel of the middle vertical arrow then its image in $H^2_{\cdot}(\cl O_{F_\fr p},\cl A[p])$ is zero, and the opposite inclusion follows. \end{proof}

\section{Admissible primes relative to $f$ and $p$} \label{section-Galois-representations}

For any prime number $q$, fix an isomorphism $E[q]\simeq(\Z/q\Z)^2$ by choosing a basis of $E[q]$ over $\Z/q\Z$ and let
\[ \rho_{E,q}:G_\Q\longrightarrow\GL_2(\Z/q\Z) \]
be the representation of the absolute Galois group of $\Q$ acting on $E[q]$.

\subsection{Choice of $p$}

Throughout our work we fix a prime number $p$ fulfilling the next

\begin{ass} \label{ass}
Suppose $L_K(f,\chi,1)\neq 0$. Then
\begin{enumerate}
\item $p\geq5$ and $p$ does not divide $cNDh(c)$ where $h(c):=[H_c:K]$ is the cardinality of the group $\text{Pic}(\cl O_c)$;
\item the Galois representation $\rho_{E,p}$ is surjective;
\item the image of $\cl L(f,\chi)$ in the quotient $\Z[\chi]/p\Z[\chi]$ is not zero;
\item $p$ does not divide the minimal degree of a modular parametrization $X_0(N)\rightarrow E$;
\item if $\fr q$ is a prime of $H_c$ dividing $N$ and $H_{c,\fr q}$ is the completion of $H_c$ at $\fr q$ then $p$ does not divide the order of the torsion subgroup of $E(H_{c,\fr q})$.
\end{enumerate}
\end{ass}
By Theorem \ref{Gross-formula}, the non-vanishing of $L_K(f,\chi,s)$ at $s=1$ is necessary for part $3$ to hold, and this is the reason why it was assumed before enumerating the required properties of the prime $p$. On the other hand, in order to emphasize its role in our arithmetic context, the condition $L_K(f,\chi,1)\not=0$ will always explicitly appear in the statement of each of our results in Section \ref{selmer-sec}.

\begin{rem}
A well-known theorem of Lutz (\cite{Lutz}) says that there is an isomorphism
\[ E(H_{c,\fr q})\simeq\Z_q^{[H_{c,\fr q}:\Q_q]}\times T \]
with $T$ a finite group, hence the torsion of $E(H_{c,\fr q})$ is indeed finite. Under the condition $L_K(f,\chi,1)\neq 0$, Theorem \ref{Gross-formula} implies that the assumption on $\cl L(f,\chi)$ excludes only a finite number of primes $p$. But then, since $E$ does not have complex multiplication, the ``open image'' theorem of Serre (\cite{S}) ensures that Assumption \ref{ass} is verified for all but finitely many primes $p$.
\end{rem}
\begin{rem}
Condition $5$ in Assumption \ref{ass} is introduced in order to ``trivialize'' the image of the local Kummer map at primes of bad reduction for $E$. However, with little extra effort one could impose suitable conditions at these primes too, thus relaxing Assumption \ref{ass}. We preferred avoiding this in order not to burden the exposition with unnecessary (at least for our purposes) technicalities, but local conditions of this kind appear, e.g., in \cite{GPa}.
\end{rem}

\subsection{Admissible primes} \label{admissible-subsec}

Let $p$ be our chosen prime number and recall the normalized newform $f(q)=\sum_{i=1}^\infty a_iq^i$ of weight $2$ on $\Gamma_0(N)$ associated with $E$. Following \cite[\S 2]{BD}, we say that a prime number $\ell$ is \emph{admissible relative to $f$ and $p$} (or simply \emph{admissible}) if it satisfies the following conditions:
\begin{enumerate}
\item $\ell$ does not divide $Npc$;
\item $\ell$ is inert in $K$;
\item $p$ does not divide $\ell^2-1$;
\item $p$ divides $(\ell+1)^2-a_\ell^2$.
\end{enumerate}
For every admissible prime $\ell$ choose once and for all a prime $\lambda_0$ of $H_c$ above $\ell$ (we will never deal with more than one admissible prime at the same time, so ignoring the dependence of $\lambda_0$ on $\ell$ should cause no confusion). Since it is inert in $K$ and it does not divide $c$, the admissible prime $\ell$ splits completely in $H_c$, hence the primes of $H_c$ above $\ell$ correspond bijectively to the elements of $G_c$. The choice of $\lambda_0$ allows us to fix an explicit bijection between these two sets via the rule
\begin{equation} \label{bijection-eq}
\sigma\in G_c\longmapsto\sigma(\lambda_0).
\end{equation}
The inverse to this bijection will be denoted
\begin{equation} \label{bijection-inverse-eq}
\lambda\longmapsto\sigma_\lambda\in G_c,
\end{equation}
so that $\sigma_\lambda(\lambda_0)=\lambda$. Finally, an element $\sigma\in G_c$ acts on the group rings $\Z[G_c]$ and $\Z/p\Z[G_c]$ in the natural way by multiplication on group-like elements (that is, $\gamma\mapsto\sigma\gamma$ for all $\gamma\in G_c$).
\begin{lemma} \label{local-iso-lemma}
Let $\ell$ be an admissible  prime relative to $f$ and $p$. The local cohomology groups $H^1_{\rm fin}(H_{c,\ell},E[p])$ and $H^1_{\rm sing}(H_{c,\ell},E[p])$ are both isomorphic to $\Z/p\Z[G_c]$ as $\Z[G_c]$-modules.
\end{lemma}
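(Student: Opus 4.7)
My strategy is to reduce the claim to a computation of finite and singular parts at a single prime of $H_c$ above $\ell$. Since $\ell$ is inert in $K$ by admissibility condition (2) and prime to $c$ by (1), it splits completely in $H_c/K$, and every completion $H_{c,\lambda}$ for $\lambda\mid\ell$ is isomorphic to the unramified quadratic extension $K_\ell$ of $\Q_\ell$. The bijection \eqref{bijection-eq} between $G_c$ and the primes of $H_c$ above $\ell$ then yields, for $?\in\{\mathrm{fin},\mathrm{sing}\}$, a decomposition
$$H^1_?(H_{c,\ell},E[p]) \;=\; \bigoplus_{\sigma\in G_c} H^1_?(H_{c,\sigma(\lambda_0)},E[p]),$$
on which $G_c$ acts by permutation of the summands through left multiplication on the index. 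Hence, provided each local factor $H^1_?(H_{c,\lambda_0},E[p])$ is a single copy of $\F_p$, the whole module is identified with the regular representation $\Z/p\Z[G_c]$, which is exactly what is claimed.

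The local computation I would perform at $\lambda_0$ goes as follows. Since $\ell\nmid N$, $E$ has good reduction at $\ell$, and since $\ell\neq p$, the module $E[p]$ is unramified at $\lambda_0$. Let $\Phi:=\Frob_\ell^2$ denote the Frobenius of the extension $H_{c,\lambda_0}^{\mathrm{ur}}/H_{c,\lambda_0}$, whose residue field has size $\ell^2$. By Eichler--Shimura the characteristic polynomial of $\Frob_\ell$ acting on $E[p]$ is $x^2-a_\ell x+\ell\pmod p$. Admissibility condition (4) forces $\pm 1$ to be a root of this polynomial modulo $p$; since the product of the two roots is $\ell$, the eigenvalues of $\Frob_\ell$ are $\{1,\ell\}$ or $\{-1,-\ell\}$, so those of $\Phi$ are $\{1,\ell^2\}$. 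Condition (3), which says $p\nmid\ell^2-1$, ensures these two eigenvalues are distinct in $\F_p$, so $\Phi$ is diagonalizable on $E[p]$ and decomposes it as a sum $\F_p\oplus\F_p(\ell^2)$ of one-dimensional eigenspaces.

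From this decomposition both local groups fall out. The finite part is the cokernel $H^1_\mathrm{fin}(H_{c,\lambda_0},E[p])=E[p]/(\Phi-1)E[p]$; here $\Phi-1$ vanishes on the trivial summand and is invertible on the $\ell^2$-summand, giving a quotient of dimension $1$ over $\F_p$. The singular part is $H^1(I_{\lambda_0},E[p])^{\Phi}$, and unramifiedness of $E[p]$ reduces this to $\Hom_\mathrm{cts}(I_{\lambda_0},E[p])^{\Phi}$. The homomorphism factors through the maximal pro-$p$ quotient $\F_p(1)$ of tame inertia, giving $E[p](-1)$ as a $\Phi$-module; its eigenvalues $\{\ell^{-2},1\}$ are again distinct, and the $\Phi$-invariants form a one-dimensional $\F_p$-space. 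Both local groups are therefore $\F_p$, and patching over $\lambda\mid\ell$ produces the regular representation of $G_c$, as required. The main subtlety I anticipate is not the diagonalization itself but keeping track of the Tate twist in the singular computation, together with the mild bookkeeping needed to verify that the geometric permutation action of $G_c$ on the primes above $\ell$ is transported by \eqref{bijection-eq} to the left regular action on $\Z/p\Z[G_c]$.
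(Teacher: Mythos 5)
Your proof is correct and follows essentially the same route as the paper: reduce to a single prime $\lambda_0$ of $H_c$ above $\ell$ via complete splitting, compute the local finite and singular groups there, and identify the resulting direct sum with the regular representation $\Z/p\Z[G_c]$ via the action of $G_c$ on the primes above $\ell$. The only difference is that you spell out the single-prime computation explicitly (diagonalizing Frobenius on $E[p]$ and on $E[p](-1)$ using admissibility conditions (3) and (4)), whereas the paper simply cites \cite[Lemma 2.6]{BD} for the fact that $H^1_{\rm fin}(K_\ell,E[p])$ and $H^1_{\rm sing}(K_\ell,E[p])$ are each isomorphic to $\Z/p\Z$.
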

\begin{proof} By \cite[Lemma 2.6]{BD}, the groups $H^1_{\rm fin}(K_{\ell},E[p])$ and $H^1_{\rm sing}(K_{\ell},E[p])$ are both isomorphic to $\Z/p\Z$. But $\ell$ splits completely in $H_c$, hence $H^1_{\rm fin}(H_{c,\ell},E[p])$ and $H^1_{\rm sing}(H_{c,\ell},E[p])$ are both isomorphic to $\Z/p\Z[G_c]$ as $\F_p$-vector spaces. Finally, the bijection described in \eqref{bijection-eq} establishes isomorphisms which are obviously $G_c$-equivariant, and we are done. \end{proof}

For $\star\in\{\text{fin, sing}\}$ we fix once and for all isomorphisms
\begin{equation} \label{fin-sing-iso-eq}
H^1_\star(K_\ell,E[p])\simeq\Z/p\Z
\end{equation}
which will often be viewed as identifications according to convenience.

The following proposition is a variant of \cite[Theorem 3.2]{BD} (in the proof we will make use of the algebraic results described in Appendix \ref{appendix}).

\begin{prop} \label{existence-admissible-primes}
Let $s$ be a non-zero element of $H^1(H_c,E[p])$. There are infinitely many admissible primes $\ell$ such that $\res_\ell(s)\neq0$.
\end{prop}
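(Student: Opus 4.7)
The plan is to prove the proposition via a Chebotarev density argument on a suitable finite Galois extension of $\Q$. Set $F:=H_c(E[p])$. Since $G_F$ acts trivially on $E[p]$, the restriction $\phi:=s|_{G_F}\colon G_F\to E[p]$ is a genuine homomorphism, and by inflation--restriction the kernel of $H^1(H_c,E[p])\to H^1(F,E[p])$ is $H^1(\Gal(F/H_c),E[p])$. Using Assumption~\ref{ass}(1)--(2), the algebraic results collected in the appendix identify $\Gal(F/H_c)$ with $\GL_2(\F_p)$, so the standard vanishing $H^1(\GL_2(\F_p),\F_p^2)=0$ for $p\geq 5$ forces $\phi\neq 0$.

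Let $F':=F^{\ker\phi}$. The inclusion $\Gal(F'/F)\hookrightarrow E[p]$ induced by $\phi$ is $\Gal(F/H_c)$-equivariant, and the irreducibility of $E[p]$ as a $\GL_2(\F_p)$-module gives $\Gal(F'/F)\simeq E[p]$. I would next construct a non-empty conjugacy class $C\subset\Gal(F'/\Q)$ whose Frobenius elements yield admissible primes $\ell$ with $\res_\ell(s)\neq 0$. Fix $\bar\lambda\in\F_p^\times$ with $\bar\lambda\neq\pm 1$ and look for $\sigma\in\Gal(F'/\Q)$ such that $\sigma|_K$ is non-trivial (ensuring $\ell$ inert in $K$ and, since $\sigma|_{H_c}$ then has order $2$, that $\ell$ splits completely in $H_c/K$) and such that $\sigma$ acts on $E[p]$ with eigenvalues $\{1,\bar\lambda\}$; the relation $\det(\sigma|_{E[p]})=\chi_{\rm cyc}(\sigma)\equiv\ell\pmod p$ then packages admissibility conditions~(3) and~(4).

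Existence of such $\sigma$ uses surjectivity of $\rho_{E,p}$: write $\sigma=c\rho$ with $c\in G_\Q$ a complex conjugation (acting on $E[p]$ with eigenvalues $\{1,-1\}$) and $\rho\in\Gal(F/H_c)\simeq\GL_2(\F_p)$; a direct matrix calculation shows one can choose $\rho$ so that $c\rho$ has the prescribed eigenvalues. To additionally enforce $\res_\ell(s)\neq 0$, note that for $\ell$ unramified in $F'$ the restriction lives in $H^1_{\rm fin}(H_{c,\ell},E[p])\simeq E[p]/(\sigma^2-1)E[p]$ and is represented by $s(\sigma^2)$, where $\sigma^2\in G_{H_c}$ because $\sigma|_{H_c}$ has order $2$. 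A short cocycle computation shows that replacing $\sigma$ by $\sigma\tau$ with $\tau\in\Gal(F'/F)\simeq E[p]$ changes $s(\sigma^2)$ by $\sigma(1+\sigma)\phi(\tau)$; since the eigenvalues $\{2,1+\bar\lambda\}$ of $1+\sigma$ are non-zero modulo $p$, the endomorphism $\sigma(1+\sigma)$ is invertible on $E[p]$, so the modification sweeps out all of $E[p]$ and one can always arrange $s(\sigma^2)\notin(\sigma^2-1)E[p]$.

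Applying the Chebotarev density theorem to the non-empty conjugacy class $C$ then produces infinitely many unramified primes $\ell$ with $\Frob_\ell\in C$; each such $\ell$ satisfies all four admissibility conditions and $\res_\ell(s)\neq 0$. The main obstacle is the combinatorial step of showing that $C$ is non-empty while simultaneously enforcing the non-vanishing of $s(\sigma^2)$ modulo $(\sigma^2-1)E[p]$: this hinges on the full strength of the surjectivity of $\rho_{E,p}$ and on the irreducibility of $E[p]$ as a $\GL_2(\F_p)$-module, which together are what the appendix packages into the statement that $\Gal(F'/\Q)$ is large enough to accommodate all these constraints at once.
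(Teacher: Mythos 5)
Your overall strategy is the paper's: pass to the field cut out by $s$, use \v{C}ebotarev to find Frobenius elements whose eigenvalues on $E[p]$ encode the admissibility conditions, and arrange the $E[p]$-component so that the localization of $s$ survives. But there is a genuine gap at the foundational step: the field $F'=F^{\ker\phi}$ is Galois over $H_c$ (this is Proposition \ref{app-prop-1}), \emph{not} over $\Q$. The class $s$ is only defined over $H_c$, and for $g\in G_\Q\setminus G_{H_c}$ conjugation by $g$ carries $\ker(\phi)$ to $\ker(\phi^{g})$, where $\phi^g$ is the restriction of the conjugate class $s^{g}$; these kernels are in general distinct subgroups of $G_F$, so $G_{F'}$ is not normal in $G_\Q$. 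Consequently your ``conjugacy class $C\subset\Gal(F'/\Q)$'' and the element $\Frob_\ell\in\Gal(F'/\Q)$ are not defined, and the decomposition $\sigma=c\rho$ with $\rho\in\Gal(F/H_c)$ does not produce an automorphism of $F'$. This is precisely why the paper (i) first reduces, without loss of generality, to $s$ lying in an eigenspace for complex conjugation, and (ii) replaces $M_s=F'$ by the composite $\tilde M_s$ of all the fields $M_{s^\sigma}$, $\sigma\in G_c$, which Propositions \ref{app-prop-3} and \ref{app-prop-4} show is Galois over $\Q$ with group a semidirect product of $\Gal(M/\Q)$ by $E[p]^{\#S}$. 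The \v{C}ebotarev condition must then be imposed on \emph{all} the components $(v_\rho)_{\rho\in S}$ at once, not just on the single copy of $E[p]$ you work with.

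The same issue infects your final ``sweeping'' computation. The identity $s\bigl((\sigma\tau)^2\bigr)=s(\sigma^2)+\sigma(1+\sigma)\phi(\tau)$ uses $\phi(\sigma^{-1}\tau\sigma)=\sigma^{-1}\phi(\tau)$, i.e.\ equivariance of $\phi$ under conjugation by $\sigma$; since $\sigma\notin G_{H_c}$, what one actually gets is $\phi(\sigma^{-1}\tau\sigma)=\sigma^{-1}\bigl(\phi^{\sigma}(\tau)\bigr)$ with $\phi^\sigma$ a possibly different homomorphism, so the correct perturbation is $\sigma\,\phi^{\sigma}(\tau)+\sigma^{2}\phi(\tau)$ and its surjectivity onto $E[p]$ is no longer automatic. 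Once $s$ is put in a $\tau$-eigenspace and one works inside $\Gal(\tilde M_s/\Q)$, this can be repaired (indeed your idea of varying the $E[p]$-component is a legitimate alternative to the paper's device, which instead prescribes $v\neq0$ in the $\delta$-eigenspace of $T$ and exploits that the residue degree $r$ is even and prime to $p$ to get $\bar s(rv)\neq0$), but as written the argument rests on a Galois group that does not exist. A minor additional point: the identification $\Gal(F/H_c)\simeq\GL_2(\F_p)$ is not supplied by the appendix; it requires the linear disjointness $H_c\cap\Q(E[p])=\Q$, which the paper proves by a ramification and Minkowski argument.
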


\begin{proof} Let $\Q(E[p])$ be the extension of $\Q$ fixed by the kernel of the representation ${\rho}_{E,p}$. Let $M$ be the composite of the extensions $\Q(E[p])$ and $H_c$, which are linearly disjoint over $\Q$; in fact, the discriminant of $H_c$ is prime to $pN$ and $\Q(E[p])$ is ramified only at primes dividing $pN$, hence $H_c\cap\Q(E[p])$ is unramified over $\Q$: we conclude that $H_c\cap\Q(E[p])=\Q$ by Minkowski's theorem. Since $\Gal(H_c/\Q)$ is the semidirect product of $G_c$ and $\Gal(K/\Q)$, with the non-trivial element $\tau$ of $\Gal(K/\Q)$ acting on the abelian normal subgroup $G_c$ by $\sigma\mapsto\tau\sigma\tau^{-1}=\sigma^{-1}$, it follows that
\[ \Gal(M/\Q)=\Gal(H_c/\Q)\times \Gal(\Q(E[p])/\Q)\subset(G_c\rtimes\{1,\tau\})\times\Aut(E[p]). \]
The elements in $\Gal(M/\Q)$ can then be identified with triples $(\sigma,\tau^j,T)$ where $\sigma\in G_c$, $j\in \left\{0,1\right\}$ and $T\in\Aut(E[p])$. For any $s\in H^1(H_c,E[p])$ denote
\[ \bar{s}\in H^1(M,E[p])^{\Gal(M/H_c)}=\Hom_{\Gal(M/H_c)}(\Gal(\bar\Q/M),E[p]) \]
the restriction of $s$. By the argument in the proof of \cite[Proposition 4.1]{BD-RCF}, the above restriction map is injective, so $\bar{s}\neq 0$. Let $M_s$ be the extension of $M$ cut out by $\ker(\bar{s})$. By Proposition \ref{app-prop-1}, $M_s/H_c$ is Galois. Since $E[p]$ is an irreducible $\Gal(\bar\Q/H_c)$-module because $\rho_{E,p}$ is surjective by condition $2$ in Assumption \ref{ass}, it follows that $\Gal(M_s/M)\simeq E[p]$.

Fix now $s\in H^1(H_c,E[p])$ such that $s\neq 0$. Without loss of generality, assume that $s$ belongs to an eigenspace for the complex conjugation $\tau$, so $\tau(s)=\delta s$ for $\delta\in\{1,-1\}$. For every $\sigma\in G_c$ write $s\mapsto s^\sigma$ for the natural action of $\sigma$ on $s$ and let $\tilde M_s$ denote the composite of the extensions $M_{s^ \sigma}$ for all $\sigma\in G_c$. By Proposition \ref{app-prop-3}, $\tilde M_s/\Q$ is Galois and, by Proposition \ref{app-prop-4}, $\Gal(\tilde M_s/\Q)$ identifies with the semidirect product of $\Gal(M/\Q)$ and a finite number of copies of $E[p]$ indexed by a suitable subset $S$ of elements in $G_c$ (here we use the fact that $s$ belongs to a specific eigenspace for $\tau$). Denote elements in $\Gal(\tilde M_s/\Q)$ by $\bigl((v_\rho)_{\rho\in S},\sigma,\tau^j,T\bigr)$ with $(v_\rho)_{\rho\in S}\in E[p]^{\#S}$ and $(\sigma,\tau^j,T)\in \Gal(M/\Q)$. Let now $\bigl((v_\rho)_{\rho\in S},1,\tau,T\bigr)\in\Gal(\tilde M_s/\Q)$ be such that
\begin{enumerate}
\item $T$ has eigenvalues $\delta$ and $\mu\in(\Z/p\Z)^\times$ such that $\mu\neq\pm1$ (such a $T$ exists because $\rho_{E,p}$ is surjective and $p\geq 5$);
\item for all $\rho\in S$, $v_\rho=v\neq 0$ and $v$ belongs to the $\delta$-eigenspace for $T$.
\end{enumerate}
For any number field $F$, any Galois extension $\Gal(F'/F)$ and any prime ideal $\fr q$ of $F$ which is unramified in $F'$, let ${\rm Frob}_{F'/F}(\fr q)$ denote a Frobenius element of $\Gal(F'/F)$ at $\fr q$. Let $\ell$ be a prime number such that
\begin{enumerate}
\item $\ell$ does not divide $Npc$;
\item $\ell$ is unramified in $\tilde M_s/\Q$;
\item $\Frob_{\tilde M_s/\Q}(\ell)=\bigl((v_\rho)_{\rho\in S},1,\tau,T\bigr)$.
\end{enumerate}
By the \v Cebotarev density theorem, there are infinitely many such primes.

The prime $\ell$ is admissible. Indeed, first note that $\Frob_{K/\Q}(\ell)=\tau$ implies that $\ell$ is inert in $K$. Moreover, the characteristic polynomial of $\Frob_{M/K}(\ell)$ acting on $E[p]$ is the reduction modulo ${p}$ of $X^2-a_\ell
X+\ell$, so $a_\ell\equiv\delta+\mu\pmod{p}$ and $\ell\equiv\delta\mu\pmod{p}$, whence $a_\ell\equiv\delta(\ell+1)\pmod{p}$; we conclude that $a_\ell\equiv\pm(\ell+1)\pmod{p}$ since $\delta\in\{1,-1\}$. Finally, $\ell\not\equiv\pm1\pmod{p}$ since $\mu\neq\pm1$.

To show that $\res_\ell(s)\neq 0$, let $\fr l$ be a prime of $M$ dividing $\ell$ and define $r$ to be the degree of the corresponding residue field extension. By Proposition \ref{app-prop-4} and equation \eqref{app-eq-2},
\[ {\rm Frob}_{\tilde M_s/M}(\fr l)=\bigl((v_\rho)_{\rho\in S},1,\tau,T\bigr)^r=\bigl((rv_\rho)_{\rho\in S},1,1,1\bigr). \]
The integer $r$ is even and prime to $p$ because $p$ does not divide the cardinality $h(c)$ of $G_c$ by condition $1$ in Assumption \ref{ass} and the order of $T$ is prime to $p$. Hence
\[ \bar{s}\bigl(\Frob_{\tilde M_s/M}(\fr l)\bigr)=\bar{s}(rv)=r\bar{s}(v)\neq 0 \]
and $\res_{\fr l}(\bar{s})\neq 0$. This shows that $\res_{\fr l}(s)\neq 0$, hence $\res_\ell(s)\neq 0$. \end{proof}

\section{Shimura curves and Hecke algebras}

\subsection{The Shimura curve $X^{(\ell)}$}\label{section-X-ell}

Let $\ell$ be an admissible prime relative to $f$ and $p$, let $\cl B$ be the indefinite quaternion algebra over $\Q$ of discriminant $N^-\ell$ and fix an Eichler order $\cl R\subset\cl B$ of level $N^+$. Let $\cl H_\infty$ be the complex upper half plane and denote $\cl R^\times_1$ the group of elements of norm $1$ in $\cl R$. Fix an embedding $i_\infty:\cl B\rightarrow\M_2(\R)$ and write $\Gamma_\infty$ for the image of $\cl R_1^\times$ in $\PGL_2(\R)$ obtained by composing $i_\infty$ with the canonical projection $\GL_2(\R)\rightarrow\PGL_2(\R)$. Let $\PGL_2(\R)$ act on $\cl H_\infty$ by M\"obius (i.e., fractional linear) transformations. The analytic quotient $\cl H_\infty/\Gamma_\infty$ has a natural structure of compact Riemann surface which, by \cite[Ch. 9]{Sh}, admits a model $X^{(\ell)}=X^{(\ell)}_\Q$ defined over $\Q$. The curve $X^{(\ell)}$ will be referred to as the \emph{rational Shimura curve} associated to $\cl B$ and $\cl R$.

Let $\Q_{\ell^2}$ (respectively, $\Z_{\ell^2}$) be the (unique, up to isomorphism) quadratic unramified extension of $\Q_\ell$ (respectively, $\Z_\ell$) and let $\mathbb F_{\ell^2}$ be the field with $\ell^2$ elements. Denote by $J^{(\ell)}$ the Jacobian variety of $X^{(\ell)}$ and by $J^{(\ell)}_{\Z_{\ell^2}}$ its Néron model over $\Z_{\ell^2}$. Let $J^{(\ell)}_{\F_{\ell^2}}$ be the special fiber of $J^{(\ell)}_{\Z_{\ell^2}}$ and denote by $J^{(\ell),0}_{\F_{\ell^2}}$ the connected component of the origin in $J^{(\ell)}_{\F_{\ell^2}}$. Finally, write
\[ \Phi_\ell:=J^{(\ell)}_{\F_{\ell^2}}\big/J^{(\ell),0}_{\F_{\ell^2}} \]
for the group of (geometric) connected components of $J^{(\ell)}_{\F_{\ell^2}}$.

\subsection{Hecke algebras and liftings of modular forms}

For any integer $M$, let $\T(M)$ be the Hecke algebra generated over $\Z$ by the Hecke operators $T_q$ for primes $q\nmid M$ and $U_q$ for primes $q|M$ acting on the space of cusp forms of level $2$ on $\Gamma_0(M)$. Denote $\T^\new$ (respectively, $\T_\ell^\new$) the quotient of $\T(N)$ (respectively, $\T(N\ell)$) acting faithfully on the cusp forms on $\Gamma_0(N)$ which are new at $N^-$ (respectively, $N^-\ell$). In the following, the Hecke operators in $\T(N)$ (respectively, $\T(N\ell)$) will be denoted $T_q$ (respectively, $t_q$) for primes $q\nmid N$ (respectively, $q\nmid N\ell$) and $U_q$ (respectively, $u_q$) for primes $q|N$ (respectively, $q|N\ell$). To ease the notation, use the same symbols for the images of these operators via the projections $\T(N)\twoheadrightarrow\T^\new$ and $\T(N\ell)\twoheadrightarrow\T_\ell^\new$. The newform $f$ gives rise to surjective homomorphisms
\[ f:\T(N)\longrightarrow\Z,\qquad f:\T^\new\longrightarrow\Z \]
which, by a notational abuse, will be denoted by the same symbol, the second one being the factorization of the first one through the projection onto the new quotient. These maps are defined by extending $\Z$-linearly the rule $f(T_q):=a_q$, $f(U_q):=a_q$; by composing $f$ with the projection onto $\Z/p\Z$ we further obtain surjections
\[ \bar f:\T(N)\longrightarrow\Z/p\Z,\qquad \bar f:\T^\new\longrightarrow\Z/p\Z. \]
Under the above assumptions on $p$ and $\ell$, by \cite[Theorem 5.15]{BD} there exists a surjective homomorphism
\begin{equation} \label{f-ell}
f_\ell:\T_\ell^\new\longrightarrow\Z/p\Z 
\end{equation} 
such that
\begin{enumerate}
\item $f_\ell(t_q)=\bar f(T_q)$ for all primes $q\nmid N\ell$;
\item $f_\ell(u_q)=\bar f(U_q)$ for all primes $q|N$;
\item $f_\ell(u_\ell)=\epsilon\pmod{p}$ where $p$ divides $\ell+1-\epsilon f(T_\ell)$.
\end{enumerate}
Let $\fr m_{f_\ell}\subset\T_\ell^\new $ be the kernel of $f_\ell$. The proofs of \cite[Theorems 5.15 and 5.17]{BD} do not use the condition that $p$ is an ordinary prime for $E$ (which was assumed at the beginning of \cite{BD}), hence by \cite[Theorem 5.15]{BD} there is a group isomorphism
\begin{equation} \label{Phi-ell}
\Phi_\ell/\fr m_{f_\ell}\simeq\Z/p\Z,
\end{equation}
and by \cite[Theorem 5.17]{BD} there is an isomorphism
\begin{equation} \label{rep-iso}
\Ta_p(J^{(\ell)})/\fr m_{f_\ell}\simeq E[p]
\end{equation}
of $G_\Q$-modules.

\begin{rem}
The modular form $f_\ell$ introduced in \eqref{f-ell} and the isomorphism \eqref{Phi-ell} are obtained by Ribet's ``level raising'' argument (\cite[Theorem 7.3]{R}) combined with the fact that, by condition $4$ in Assumption \ref{ass}, the prime $p$ does not divide the degree of a minimal parametrization $X_0(N)\rightarrow E$. The argument is based on the isomorphism which will be briefly touched upon in \eqref{omega} (see \S \ref{hecke-operators-subsec} for more information), and the details can be found in \cite[Theorem 5.15]{BD}. 
\end{rem}

Let $F$ be a number field and let
\[ \kappa:J^{(\ell)}(F)\rightarrow H^1\bigl(F,{\rm Ta}_p(J^{(\ell)})\bigr) \]
be the Kummer map relative to $J^{(\ell)}$. Composing $\kappa$ with the canonical projection ${\rm Ta}_p(J^{(\ell)})\rightarrow {\rm Ta}_p(J^{(\ell)})/\fr m_{f_\ell}$ and the isomorphism \eqref{rep-iso} yields a map
\begin{equation} \label{kappa}
\bar\kappa:J^{(\ell)}(F)\longrightarrow H^1(F,E[p]).
\end{equation}
If $q$ is a prime let $\res_q:H^1(F,E[p])\rightarrow H^1(F_q,E[p])$ be the restriction map and let $\delta_q:E(F_q)\rightarrow H^1(F_q,E[p])$ (respectively, $\kappa_q:J^{(\ell)}(F_q)\rightarrow H^1(F_q,J^{(\ell)}[p])$) be the local Kummer map relative to $E$ (respectively, $J^{(\ell)}$). Finally, for any prime $\fr p|p$ denote $\cl E_\fr p$ (respectively, $\cl J^{(\ell)}_\fr p$) the Néron model of $E$ (respectively, $J^{(\ell)}$) over $\cl O_{F_\fr p}$. Moreover, let $v_\fr p$ be the (normalized) valuation of $F_\fr p$ and let $e:=v_\fr p(p)$ be the absolute ramification index of $F_\fr p$ (in particular, $e=1$ if $p$ is unramified in $F$).

The proof of the next proposition is where the result in flat cohomology shown in \S \ref{flat-subsec} comes into play.

\begin{prop} \label{prop-kummer}
Assume that $e<p-1$. If $P\in J^{(\ell)}(F)$ then
\[ \res_q\bigl(\bar\kappa(P)\bigr)\in{\rm Im}(\delta_q) \]
for all primes $q\nmid N\ell$.
\end{prop}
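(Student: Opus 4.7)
The plan is to split the argument into two cases according to whether $q=p$ or $q\neq p$, reducing the statement in both cases to a compatibility between the Kummer maps on $J^{(\ell)}$ and $E$ via the $G_\Q$-equivariant surjection $\pi\colon J^{(\ell)}[p]\twoheadrightarrow J^{(\ell)}[p]/\fr m_{f_\ell}\simeq E[p]$ induced by \eqref{rep-iso} (recall $p\in\fr m_{f_\ell}$, so this quotient makes sense on $p$-torsion). Since the Kummer map $\kappa$ is compatible modulo $p$ with the usual Kummer map $\kappa_p\colon J^{(\ell)}(F)\to H^1(F,J^{(\ell)}[p])$, I would first check that $\bar\kappa(P)=\pi_\ast(\kappa_p(P))$ and that $\res_q$ commutes with $\pi_\ast$; the task therefore reduces to proving $\pi_\ast\bigl(\res_q(\kappa_p(P))\bigr)\in\mathrm{Im}(\delta_q)$.

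For a prime $q\neq p$ with $q\nmid N\ell$, both $E$ and $J^{(\ell)}$ have good reduction at every prime of $F$ above $q$. Proposition \ref{prop-fin-sing}, applied to $J^{(\ell)}$, places $\res_q(\kappa_p(P))$ in $H^1_{\rm fin}(F_q,J^{(\ell)}[p])$, i.e.\ makes it unramified. Because $\pi$ is Galois-equivariant, $\pi_\ast$ preserves unramifiedness, so $\res_q(\bar\kappa(P))\in H^1_{\rm fin}(F_q,E[p])$, which equals $\mathrm{Im}(\delta_q)$ by a second application of Proposition \ref{prop-fin-sing}, now to $E$.

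For $q=p$ I would work one prime $\fr p\mid p$ of $F$ at a time. Since $p\nmid N$ by Assumption \ref{ass} and $p\neq\ell$ by admissibility, both N\'eron models $\cl E_\fr p$ and $\cl J^{(\ell)}_\fr p$ are abelian schemes over $\cl O_{F_\fr p}$, so $\cl E_\fr p[p]$ and $\cl J^{(\ell)}_\fr p[p]$ are finite flat commutative group schemes killed by $p$. Proposition \ref{prop-flat} applied to $J^{(\ell)}$ places $\res_\fr p(\kappa_p(P))$ in $i_\fr p H^1_{fl}(\cl O_{F_\fr p},\cl J^{(\ell)}_\fr p[p])$. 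The hypothesis $e<p-1$ now enters decisively: by Raynaud's theorem, over a DVR of mixed characteristic with absolute ramification index strictly less than $p-1$ the generic-fiber functor on finite flat commutative group schemes of $p$-power order is fully faithful, so the $G_{F_\fr p}$-equivariant map $\pi$ extends uniquely to a morphism of $\cl O_{F_\fr p}$-group schemes $\cl J^{(\ell)}_\fr p[p]\to\cl E_\fr p[p]$. Its induced map on flat cohomology fits with $\pi_\ast$ on Galois cohomology through the inclusions $i_\fr p$ of diagram \eqref{diagram-flat}, so $\res_\fr p(\bar\kappa(P))\in i_\fr p H^1_{fl}(\cl O_{F_\fr p},\cl E_\fr p[p])=\mathrm{Im}(\delta_\fr p)$, again by Proposition \ref{prop-flat}. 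Taking the product over $\fr p\mid p$ concludes this case.

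The substantive obstacle is the case $q=p$: the finite/singular dichotomy of Proposition \ref{prop-fin-sing} breaks down at the residue characteristic, so one cannot detect $\mathrm{Im}(\delta_p)$ by an inertia condition. One must instead describe it through the flat-cohomological device of Proposition \ref{prop-flat} and then transport flat classes along $\pi_\ast$. It is precisely here that Raynaud's classification of $p$-torsion finite flat group schemes in low ramification is indispensable: without the hypothesis $e<p-1$ the Galois-equivariant surjection $\pi$ need not integrate to a morphism between $\cl J^{(\ell)}_\fr p[p]$ and $\cl E_\fr p[p]$, and the commutativity that carries $i_\fr p$-images to $i_\fr p$-images would collapse.
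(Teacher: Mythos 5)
Your proposal is correct and follows essentially the same route as the paper: the case $q\neq p$ via Proposition \ref{prop-fin-sing} and Galois-equivariance of $\pi$, and the case $q=p$ via Proposition \ref{prop-flat} together with Raynaud's theorem (used under $e<p-1$ to lift $\pi$ to a morphism $\cl J^{(\ell)}_\fr p[p]\to\cl E_\fr p[p]$ of finite flat group schemes over $\cl O_{F_\fr p}$) and the resulting commutative square between flat and Galois cohomology. The only difference is expository: you spell out the unramifiedness transfer in the $q\neq p$ case, which the paper leaves implicit.
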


\begin{proof} If $q\not=p$ the claim follows from Proposition \ref{prop-fin-sing}. Suppose that $q=p$, note that $E$ and $J^{(\ell)}$ have good reduction at $p$ since $p\nmid N$ by condition $1$ in Assumption \ref{ass}, and write
\[ i_\fr p:H^1_{fl}(\cl O_{F_\fr p},\cl E_\fr p[p])\;\longmono\;H^1(F_\fr p,E[p]) \]
and
\[ j_\fr p:H^1_{fl}\bigl(\cl O_{F_\fr p},\cl J^{(\ell)}_\fr p[p]\bigr)\;\longmono\;H^1\bigl(F_\fr p,J^{(\ell)}[p]\bigr) \]
for the two injections in \eqref{diagram-flat} corresponding to $A=E$ and $A=J^{(\ell)}$, respectively. By Proposition
\ref{prop-flat} applied first to $A=J^{(\ell)}$ and then to $A=E$, one gets
\begin{equation} \label{i-j}
{\rm Im}(\kappa_p)=\prod_{\fr p\mid p}j_\fr p\Big(H^1_{fl}\bigl(\cl O_{F_\fr p},\cl J^{(\ell)}_\fr p[p]\bigr)\Big),\qquad {\rm Im}(\delta_p)=\prod_{\fr p\mid p}i_\fr p\Big(H^1_{fl}\bigl(\cl O_{F_\fr p},\cl E_\fr p[p]\bigr)\Big).
\end{equation}
Since $p\in\fr m_{f_\ell}$ and ${\rm Ta}_p(J^{(\ell)})\big/p{\rm Ta}_p(J^{(\ell)})=J^{(\ell)}[p]$, the isomorphism in \eqref{rep-iso} gives a commutative triangle
\begin{equation} \label{triangle-tate-eq}
\xymatrix{{\rm Ta}_p\bigl(J^{(\ell)}\bigr)\ar@{->>}[d]\ar@{->>}[r]&E[p]\\
          J^{(\ell)}[p]\ar^-{\pi}@{->>}[ur]&}
\end{equation}
with surjective maps. For all primes $\fr p|p$ the generic fibers of $\cl J^{(\ell)}_\fr p[p]$ and $\cl E_\fr p[p]$ are $J^{(\ell)}_{/F_\fr p}[p]$ and $E_{/F_\fr p}[p]$ respectively, and $e<p-1$, so by \cite[Corollary 3.3.6]{Ray} the morphism $\pi=\pi_\fr p$ of \eqref{triangle-tate-eq} (viewed over $F_\fr p$) lifts uniquely to a morphism $\tilde\pi_\fr p:\cl J_\fr p^{(\ell)}[p]\rightarrow \cl E_\fr p[p]$. Hence for all primes $\fr p|p$ there is a commutative square
\begin{equation} \label{square-flat-eq}
\xymatrix{H^1_{fl}\bigl(\cl O_{F_\fr p},\cl J^{(\ell)}_\fr p[p]\bigr)\ar[r]^-{j_\fr p}\ar[d]^-{\tilde\pi_\fr p} & H^1\bigl(F_\fr p,J^{(\ell)}[p]\bigr)\ar[d]^-{\pi_\fr p}\\
             H^1_{fl}\bigl(\cl O_{F_\fr p},\cl E_\fr p[p]\bigr)\ar[r]^-{i_\fr p} & H^1\bigl(F_\fr p,E[p]\bigr)}
\end{equation}
in which the vertical maps are obtained functorially from $\tilde\pi_\fr p$ and $\pi_\fr p$, respectively. Finally, set $\pi_p:=\prod_{\fr p|p}\pi_\fr p$ and define $\bar\kappa_p:=\pi_p\circ\kappa_p$. Then if $P\in J^{(\ell)}(F)$ one has the formula
\[ \res_p\bigl(\bar\kappa(P)\bigr)=\bar\kappa_p(P). \]
On the other hand, the inclusion
\[ {\rm Im}(\bar\kappa_p)\subset\prod_{\fr p\mid p}i_\fr p\Big(H^1_{fl}\bigl(\cl O_{F_\fr p},\cl E_\fr p[p]\bigr)\Big)
={\rm Im}(\delta_p) \]
follows from \eqref{i-j} and \eqref{square-flat-eq}, and the proposition is proved. \end{proof}

\subsection{Heegner points on Shimura curves}

The Shimura curve $X^{(\ell)}$ introduced in Section \ref{section-Galois-representations} has a moduli interpretation which can be described as follows. As above, let $\cl B/\Q$ be the quaternion algebra of discriminant $N^-\ell$ and $\cl R\subset\cl B$ the Eichler order of level $N^+$ defining $X^{(\ell)}$. Fix a maximal order $\cl R_{\rm max}$ containing $\cl R$. Let $\cl F$ denote the functor from the category of $\Z[1/N\ell]$-schemes to the category of sets which associates to a $\Z[1/N\ell]$-scheme $S$ the set of isomorphism classes of triples $(A,\iota,C)$ where
\begin{enumerate}
\item $A$ is an abelian $S$-scheme of relative dimension $2$;
\item $\iota:\cl R_{\rm max}\rightarrow\End(A)$ is an inclusion defining an action on $A$ of $\cl R_{\rm max}$;
\item $C$ is a subgroup scheme of $A$ which is locally isomorphic to $\Z/N^+\Z$ and is stable and locally cyclic for the action of $\cl R_{\rm max}$.
\end{enumerate}
The functor $\cl F$ is coarsely representable by a smooth projective scheme with smooth fibers $X^{(\ell)}_{\Z[1/N\ell]}$ defined over $\Z[1/N\ell]$, whose generic fiber is $X^{(\ell)}$. See \cite[Ch. III]{BC} for details.

Let $P_c\in X^{(\ell)}$ correspond to a triple $(A_c,\iota_c,C_c)$ such that $\End(A_c)\simeq\cl O_c$. Such a point is called a \emph{Heegner point of conductor $c$}. By the theory of complex multiplication, $P_c\in X^{(\ell)}(H_c)$.

\section{The \v{C}erednik--Drinfeld theorem} \label{section-CD-theorem}

Let $\ell$ be an admissible prime relative to $f$ and $p$. Let $X_{\Z_\ell}^{(\ell)}$ be a nodal model of $X^{(\ell)}$ over
$\Z_\ell$, i.e. a proper and flat $\Z_\ell$-scheme such that
\begin{enumerate}
\item the generic fiber of $X_{\Z_\ell}^{(\ell)}$ is the base change $X^{(\ell)}\times_\Q\Q_\ell$;
\item the irreducible components of the special fiber $X_{\F_\ell}^{(\ell)}$ of $X_{\Z_\ell}^{(\ell)}$ are smooth;
\item the only singularities of $X_{\F_\ell}^{(\ell)}$ are ordinary double points.
\end{enumerate}
Before describing the result of \v{C}erednik and Drinfeld, we introduce a certain graph attached to $X_{\F_\ell}^{(\ell)}$.

\subsection{Dual graph of $X_{\F_\ell}^{(\ell)}$ and reduction map} \label{dual-graph-subsec}

The dual graph $\cl G_\ell$ of $X_{\F_\ell}^{(\ell)}$ is defined by requiring that its vertices $\cl V(\cl G_\ell)$ correspond to the irreducible components of $X_{\F_\ell}^{(\ell)}$, its edges $\cl E(\cl G_\ell)$ correspond to the singular points of $X_{\F_\ell}^{(\ell)}$ and two vertices are joined by an edge if and only if the corresponding components meet. Define a reduction map
\[ r_\ell:\Div\bigl(X^{(\ell)}(K_\ell)\bigr)\longrightarrow\Z\bigl[\cl V(\cl G_\ell)\cup\cl E(\cl G_\ell)\bigr] \]
as follows. Let $P\in X^{(\ell)}(K_\ell)$. Then $P$ can be extended to a point $\tilde P\in X_{\Z_\ell}^{(\ell)}(\cl O_{K_\ell})$, where $\cl O_{K_\ell}$ is the ring of integers of $K_\ell$. This corresponds to extending a triple $(A,\iota,C)$ over $K_\ell$ to a similar triple $(\tilde A,\tilde\iota,\tilde C)$ over $\cl O_{K_\ell}$. Denote by $\bar P$ the reduction of $\tilde P$ to the special fiber, which corresponds to the reduction $(\bar A,\bar\iota,\bar C)$ of $(\tilde A,\tilde\iota,\tilde C)$ modulo $\ell$. Then define $r_\ell(P)$ by requiring  that $r_\ell(P)$ is equal to a vertex $v$ (respectively, to an edge $e$) if $\bar P$ is non-singular and belongs to the component corresponding to $v$ (respectively, is the singular point corresponding to $e$).

Denote by $\Z[\cl V(\cl G_\ell)]^0$ the subgroup of divisors of degree $0$ in $\Z[\cl V(\cl G_\ell)]$. As explained in \cite[Corollary 5.12]{BD}, there is a natural map of groups
\[ \omega_\ell:\Z[\cl V(\cl G_\ell)]^0\longrightarrow\Phi_\ell. \]
Let $D\in\Div^0\bigl(X^{(\ell)}(K_\ell)\bigr)$ be a divisor of degree zero such that every point $P$ in the support of $D$ is defined over $\Q_{\ell^2}$ and has non-singular reduction, so that $r_\ell(P)\in\cl V(\cl G_\ell)$. Denote by $[D]$ the class of $D$ in $J^{(\ell)}(\Q_{\ell^2})$ and by
\begin{equation} \label{partial-eq}
\partial_\ell:J^{(\ell)}(\Q_{\ell^2})\longrightarrow\Phi_\ell
\end{equation}
the specialization map. By \cite[Proposition 5.14]{BD}, there is an equality
\begin{equation} \label{edix}
\partial_\ell([D])=\omega_\ell\bigl(r_\ell(D)\bigr).
\end{equation}

\subsection{The theorem of \v{C}erednik and Drinfeld}

Let $\cl T_\ell$ be the Bruhat-Tits tree of $\PGL_2(\Q_\ell)$, and denote $\cl V(\cl T_\ell)$ (respectively, $\cl E(\cl T_\ell)$) the set of vertices (respectively, edges) of $\cl T_\ell$.

Let $\widehat{\cl H}_\ell$ be the formal scheme over $\Z_\ell$ which is obtained by blowing up the projective line $\mathbb
P^1_{/\Z_\ell}$ along the rational points in its special fiber $\mathbb P_{/\F_\ell}^1$ successively (see \cite[Ch. I]{BC} for equivalent definitions of $\widehat{\cl H}_\ell$). The generic fiber $\cl H_\ell$ of $\widehat{\cl H}_\ell$ is a rigid analytic space over $\Z_\ell$ whose $\mathbb C_\ell$-points are given by
\[ \cl H_\ell(\C_\ell)=\mathbb P^1(\C_\ell)-\mathbb P^1(\Q_\ell)=\mathbb C_\ell-\Q_\ell. \]
By the theory of Schottky groups (see \cite{GvP} or \cite{BC} for an exposition), the connected components of the special fiber $\overline{\cl H}_{\ell}$ of $\widehat{\cl H}_\ell$ are smooth and meet transversely at ordinary double points. Hence we can form the dual graph of $\overline{\cl H}_{\ell}$, whose vertices are in bijection with the connected components of $\overline{\cl H}_\ell$, whose edges are in bijection with the singular points of $\overline{\cl H}_\ell$ and where two vertices are joined by an edge if and only if the corresponding components meet. This dual graph is identified with $\cl T_\ell$.

Let $\Gamma_{\ell}$ denote the group of norm $1$ elements in $R[1/\ell]$. Fix an isomorphism $i_\ell:B_\ell:=B\otimes_\Q\Q_\ell\rightarrow\M_2(\Q_\ell)$ such that $i_\ell(R\otimes\Z_\ell)=\M_2(\Z_\ell)$. The group $\Gamma_\ell$ acts discontinuously on $\widehat{\cl H}_\ell$ and $\cl H_\ell$ by fractional linear transformations via $i_\ell$. The quotients $\widehat{\cl H}_\ell/\Gamma_{\ell}$ and $\cl H_\ell/\Gamma_\ell$ are, respectively, a formal $\Z_\ell$-scheme and an $\ell$-adic rigid analytic space.

The \v{C}erednik-Drinfeld theorem (see \cite[III, Théorème 5.2]{BC}) says that the formal completion $\widehat X^{(\ell)}_{\Z_\ell}$ of $X^{(\ell)}_{\Z_\ell}$ along its special fiber is isomorphic over $\Z_{\ell^2}$ to $\widehat{\cl H}_\ell/\Gamma_{\ell}$ as a formal scheme. Hence the analytic space $X^{(\ell),\rm an}_{\Q_\ell}$ over $\Q_\ell$ associated to $X^{(\ell)}$ is isomorphic over $\Z_{\ell^2}$ to $\cl H_\ell/\Gamma_\ell$ as a rigid analytic space and its special fiber $X_{\F_\ell}^{(\ell)}$ is isomorphic over $\F_{\ell^2}$ to the special fiber of $\widehat{\cl H}_\ell/\Gamma_{\ell}$ as schemes. In particular, the dual graph of $X_{\F_\ell}^{(\ell)}$ is equal to the quotient of the dual graph of the special fiber of $\widehat{\cl H}_\ell$, that is, $\cl G_\ell\simeq \cl T_\ell/\Gamma_\ell$. It follows from this (see \cite[Lemma 5.6]{BD} for details) that there are identifications
\begin{equation} \label{vertexes-edges}
\cl V(\cl G_\ell)=\widehat R^\times\backslash\widehat B^\times/B^\times\times\{0,1\},\qquad \cl E(\cl G_\ell)=\widehat R_\ell^\times\backslash\widehat B^\times/B^\times
\end{equation}
where $R_\ell$ is the Eichler order of level $N^+\ell$ in $R$ such that $i_\ell(R_\ell)$ is the group of matrices in $\M_2(\Z_\ell)$ which are upper triangular modulo $\ell$. In particular, $\cl V(\cl G_\ell)$ identifies with the disjoint union of two copies of $\widehat R^\times\backslash\widehat B^\times/B^\times$.

\section{An explicit reciprocity law}

\subsection{Reduction of Heegner points}

Let the Heegner point $P_c$ correspond to the triple $(A_c,\iota_c,C_c)$ and write $\End(P_c)$ for the endomorphism ring of the abelian surface $A_c$ with its level structure, which is isomorphic to $\cl O_c$. Let $\ell$ be an admissible prime; the prime $\lambda_0$ of $H_c$ above $\ell$ chosen in \S \ref{admissible-subsec} determines an embedding 
\[ \iota_{\lambda_0}:H_c\;\longmono\;H_{c,\lambda_0}=K_\ell. \] 
The point $P_c$ can then be viewed as a point in $X^{(\ell)}(K_\ell)$ and it is possible to consider its image $\bar P_c$ in the special fiber as in Section \ref{section-CD-theorem}, which corresponds to a triple $(\bar A_c,\bar\iota_c,\bar C_c)$. Write $\End(\bar P_c)$ for the endomorphism ring of $\bar P_c$. By \cite[Lemma 4.13]{BD-inv-131}, $\End(\bar P_c)[1/\ell]$ is isomorphic to $R[1/\ell]$ where $R$ is the order introduced in Section \ref{section-GZ-formula}. Hence the canonical map $\End(A_c)\rightarrow \End(\bar A_c)$ obtained by reduction of endomorphisms can be extended to a map
\[ \varphi:\cl O_c[1/\ell]\simeq\End(A_c)\otimes_\Z\Z[1/\ell]\longrightarrow \End(\bar P_c)\otimes_\Z\Z[1/\ell]\simeq R[1/\ell]. \]
After tensoring with $\Q$ over $\Z[1/\ell]$, this yields an embedding
\[ \varphi:K\;\longmono\;B, \]
denoted by the same symbol. Let $\{R=R_1,\dots,R_h\}$ be a complete set of representatives for the isomorphism classes of Eichler orders of level $N^+$ in $B$. By \cite[Proposition 7.3]{GZ}, the map $\varphi:K\hookrightarrow B$ thus obtained is an optimal embedding of $\cl O_c$ into $R_i$ for some $i\in\{1,\dots,h\}$. Since, by the strong approximation theorem, this set of representatives can be chosen so that $R_i[1/p]=R[1/p]$ for all $i=1,\dots,h$, we can assume without loss of generality that $R_i=R$.

The group $B_\ell^\times$ acts on $\cl H_\ell$ by fractional linear transformations via $\iota_\ell$. Hence there is an action of $K_\ell^\times$ on $\cl H_\ell$ induced by extending $\varphi$ to an embedding $\varphi_\ell:K_\ell\hookrightarrow B_\ell$. By \cite[Section 4, III]{BD-inv-131}, the point $P_c\in X^{(\ell)}(K_p)$ is identified via the \v{C}erednik-Drinfeld Theorem with one of the two fixed points of the action of $K_\ell$ on $\cl H_\ell$ thus obtained.

\begin{lemma} \label{lemma-reduction}
The point $P_c\in X^{(\ell)}(H_c)$ reduces to a non-singular point in the special fiber.
\end{lemma}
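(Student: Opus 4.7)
The plan is to exploit the \v{C}erednik--Drinfeld uniformization together with the fact that $\ell$ is inert in $K$ in order to place $P_c$ on the fixed locus of an anisotropic torus and then observe that such fixed points reduce to a vertex of the Bruhat--Tits tree.

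First I would use the identification recalled at the end of Section \ref{section-CD-theorem}: after extending scalars to $\Q_{\ell^2}$, the rigid analytic space associated to $X^{(\ell)}$ is $\cl H_\ell/\Gamma_\ell$, and the dual graph of the special fiber is the quotient of $\cl T_\ell$ by $\Gamma_\ell$. Under this identification, a point $P\in X^{(\ell)}(K_\ell)$ has singular reduction if and only if one (equivalently, any) of its preimages in $\cl H_\ell(\C_\ell)=\C_\ell-\Q_\ell$ reduces to the interior of an edge of $\cl T_\ell$, while it has non-singular reduction if and only if such a preimage reduces to a vertex. As recalled just before the statement, $P_c$ corresponds via \v{C}erednik--Drinfeld to one of the two fixed points of the action of $K_\ell^\times$ on $\cl H_\ell$ obtained from the embedding $\varphi_\ell:K_\ell\hookrightarrow B_\ell$.

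Next I would invoke the admissibility of $\ell$: since $\ell$ is inert in $K$, the field $K_\ell$ is the unramified quadratic extension $\Q_{\ell^2}$ of $\Q_\ell$. Hence $\varphi_\ell(K_\ell^\times)$ is an anisotropic-modulo-centre (non-split, unramified) torus inside $B_\ell^\times\simeq\GL_2(\Q_\ell)$. A standard fact about the action of such a torus on the Bruhat--Tits tree $\cl T_\ell$ of $\PGL_2(\Q_\ell)$ is that it fixes a \emph{unique} vertex, namely the homothety class of the $\Z_\ell$-lattice $\cl O_{K_\ell}$ inside $K_\ell\simeq\Q_\ell^2$: indeed $\cl O_{K_\ell}$ is the unique (up to homothety) $\Z_\ell$-lattice stable under multiplication by $\cl O_{K_\ell}$, because in the unramified case $\cl O_{K_\ell}^\times$ acts transitively on the set of $\Z_\ell$-lines of $\cl O_{K_\ell}/\ell$. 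This would be contrasted with the ramified case (where a unique edge but no vertex is fixed) and the split case (where a whole apartment is fixed), neither of which occurs here.

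Finally, I would conclude by combining the two previous observations. The reduction map $\cl H_\ell\to\cl T_\ell$ is equivariant for the action of $B_\ell^\times$, so any fixed point of $K_\ell^\times$ on $\cl H_\ell$ must reduce to a fixed point of $K_\ell^\times$ on $\cl T_\ell$; by the previous paragraph, this fixed point is the vertex attached to $\cl O_{K_\ell}$. In particular, no preimage of $P_c$ in $\cl H_\ell$ reduces to the interior of an edge, so $r_\ell(P_c)\in\cl V(\cl G_\ell)$ and $\bar P_c$ is non-singular. I expect the only delicate point to be the justification that an unramified anisotropic torus in $\PGL_2(\Q_\ell)$ fixes exactly one vertex of $\cl T_\ell$, which is classical but worth stating carefully since it is precisely where the admissibility hypothesis (and the inertness of $\ell$ in $K$) enters the argument.
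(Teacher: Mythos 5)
Your proposal is correct and essentially reproduces the paper's own argument: both proofs identify $P_c$ with a fixed point of the torus $\varphi_\ell(K_\ell^\times)$ on $\cl H_\ell$ via \v{C}erednik--Drinfeld, then use equivariance of the reduction map to reduce to showing that this (unramified, non-split, since $\ell$ is inert in $K$) torus fixes a unique vertex and no edge of $\cl T_\ell$. The only cosmetic difference is in the dictionary used for the tree: you phrase the uniqueness of the fixed vertex in terms of homothety classes of $\Z_\ell$-lattices stable under $\cl O_{K_\ell}$, whereas the paper phrases the same fact in terms of $\cl O_c\otimes\Z_\ell$ being contained in exactly one maximal order of $B_\ell$ and admitting no optimal embedding into a non-maximal Eichler order; these are equivalent under the standard identification of vertices with maximal orders of $\M_2(\Q_\ell)$ and edges with Eichler orders of level $\ell$.
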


\begin{proof} The Heegner point $P_c$ corresponds to a fixed point for the action of $\varphi_\ell(K_\ell^\times)$ on $\cl H_\ell$. On the other hand, $\varphi_\ell(\cl O_c\otimes\Z_\ell)$ is contained in exactly one maximal order of $B_\ell$ because $\cl O_c\otimes\Z_\ell$ is maximal and $\ell$ is inert in $K$, hence there are no optimal embeddings of $\cl O_c\otimes\Z_\ell$ into Eichler orders which are not maximal. Thus the action by conjugation of $\varphi_\ell(K_\ell^\times)$ on $\cl V(\cl T_\ell)\cup \cl E(\cl T_\ell)$ fixes exactly one vertex, namely the one corresponding to the order $R\otimes\Z_\ell$. By the $\GL_2(\Q_\ell)$-equivariance of $r_\ell$, we conclude that the reduction of $P_c$ must coincide with that vertex. \end{proof}

\subsection{$p$-isolated forms} \label{section-p-isolated}

Let $\cl S_2(B,R;\Z_p)$ denote the $\Z_p$-module of functions
\[ \widehat R^\times\backslash\widehat B^\times/B^\times\longrightarrow\Z_p. \] 
Then the function $\phi=\phi(f)$ associated to $f$ by the Jacquet--Langlands correspondence belongs to $\cl S_2(B,R;\Z_p)$. Now denote by $\fr m_f$ the kernel of $\bar f:\T(N)\rightarrow\Z/p\Z$. Say that $f$ is \emph{$p$-isolated} if the completion of $\cl S_2(B,R;\Z_p)$ at $\fr m_f$ is a free $\Z_p$-module of rank one. As a consequence, if $f$ is $p$-isolated then there are no non-trivial congruences modulo $p$ between $\phi$ and other forms in $\cl S_2(B,R;\Z_p)$.

By condition 4 in Assumption \ref{ass}, $p$ does not divide the minimal degree of a modular parametrization $X_0(N)\rightarrow E$, and this implies, as remarked in the proof of \cite[Lemma 2.2]{BD}, that $f$ is $p$-isolated (see also \cite[Theorem 2.2]{ARS} for details).

Another consequence of \ref{ass} is that the dimension of the $\F_p$-vector space $\Z[\widehat R^\times\backslash\widehat
B^\times/B^\times]^0/\fr m_f$ is at most one. This can be proved as follows. Let $r$ be a prime number dividing $N^-$ (such an $r$ exists because $N^-$ is the product of an odd number of primes). Denote by $\cl B^\prime$ the indefinite quaternion algebra over $\Q$ of discriminant $N^-/r$ and let $\cl R^\prime\subset\cl B^\prime$ be an Eichler order of level $N^+r$. As in \S \ref{section-X-ell}, denote by $\cl R^{\prime\times}_1$ the group of elements of norm $1$ in $\cl R^\prime$. Fix an embedding $i_\infty^\prime:\cl B^\prime\rightarrow\M_2(\R)$ and write $\Gamma^\prime_\infty$ for the image of $\cl R_1^{\prime\times}$ in $\PGL_2(\R)$ via the composition of $i_\infty^\prime$ with the canonical projection $\GL_2(\R)\rightarrow\PGL_2(\R)$. As in \S \ref{section-X-ell}, the analytic quotient $\cl H_\infty/\Gamma_\infty^\prime$ has a natural structure of compact Riemann surface which, by \cite[Ch. 9]{Sh}, admits a model $X^{(r)}=X^{(r)}_\Q$ defined over $\Q$. The character group of the jacobian of $X^{(r)}$ at $r$ can be identified with $\Z[\widehat R^\times\backslash\widehat B^\times/B^\times]^0$ as explained, e.g., in \cite[\S 5.3 and \S 5.4]{BD}. Then an extension of \cite[Theorem 6.4]{R} to the context of Shimura curves shows that the dimension of $\Z[\widehat R^\times\backslash\widehat B^\times/B^\times]^0/\fr m_f$ over $\F_p$ is at most one. For more details, the reader is referred to \cite[Theorem 5.15]{BD}.

\subsection{Hecke operators and modular forms} \label{hecke-operators-subsec}

The action of the Hecke operators on the function $\phi$ associated to the modular form $f$ by the Jacquet--Langlands  correspondence can be represented as follows. Let $\tilde\phi$ be the $\Z$-extension of $\phi$ to a map $\Z[\widehat R^\times\backslash\widehat B^\times/B^\times]\rightarrow \Z$ and $i$ the natural injection $\widehat R^\times\backslash\widehat B^\times/B^\times\hookrightarrow \Z[\widehat R^\times\backslash\widehat B^\times/B^\times]$. Any Hecke operator $T\in\T(N)$ acts on $\Z[\widehat R^\times\backslash\widehat B^\times/B^\times]$ by correspondences via the usual double coset decomposition. Then
\[ T(\phi):=\tilde\phi\circ T\circ i:\widehat R^\times\backslash\widehat B^\times/B^\times\longrightarrow\Z. \]
For any prime $q\nmid N$ define $\eta_q:=T_q-(q+1)\in\T(N)$. Since $\deg\bigl(\eta_q(D)\bigr)=0$ for every $D\in\Z[\widehat R^\times\backslash\widehat B^\times/B^\times]$, we can define
\begin{equation} \label{j-map-eq}
j:=\eta_q\circ i:\widehat R^\times\backslash\widehat B^\times/B^\times\longrightarrow\Z\bigl[\widehat R^\times\backslash\widehat B^\times/B^\times\bigr]^0, 
\end{equation}
where $\Z[\widehat R^\times\backslash\widehat B^\times/B]^0$ is the subgroup of divisors of degree $0$ in $\Z[\widehat R^\times\backslash\widehat B^\times/B]$. As before, let $\ell$ be an admissible prime. From here on fix a prime $q\not=\ell$ such that
\[ a_q\not\equiv q+1\pmod{p}, \]
which can be done by the \v{C}ebotarev density theorem and the surjectivity of $\rho_{E,p}$. Finally, write
\[ \bar\phi:\widehat R^\times\backslash\widehat B^\times/B^\times\longrightarrow\Z/p\Z \]
for the reduction modulo $p$ of $\phi$; then $\eta_q\bigl(\bar\phi\bigr)$ is a non-zero multiple of $\bar\phi$ and $\eta_q\bigl(\bar\phi\bigr)=\tilde\phi\circ j\pmod{p}$, where $\tilde\phi$ denotes, by an abuse of notations, the restriction of $\tilde\phi$ to $\Z[\widehat R^\times\backslash\widehat B^\times/B^\times]^0$.

Recall the ideal $\fr m_f\subset\T(N)$ defined in \S \ref{section-p-isolated}. Since $\Z[\widehat R^\times\backslash\widehat B^\times/B^\times]^0$ is invariant under $\T(N)$, we can consider its quotient by $\fr m_f$. This vector space has dimension one over $\F_p$, as stated in the next
\begin{prop} \label{one-dim-prop}
The $\F_p$-vector space $\Z\bigl[\widehat R^\times\backslash\widehat B^\times/B^\times\bigr]^0\big/\fr m_f$ has dimension one.
\end{prop}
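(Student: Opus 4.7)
The plan is to split the argument into an upper bound and a lower bound. For the upper bound, I would invoke the discussion already sketched at the end of \S\ref{section-p-isolated}: one identifies $\Z[\widehat R^\times\backslash\widehat B^\times/B^\times]^0$ with the character group (at the prime $r$) of the jacobian of a suitable Shimura curve $X^{(r)}$ attached to the indefinite quaternion algebra of discriminant $N^-/r$ and Eichler level $N^+r$, where $r$ is any prime dividing $N^-$ (such $r$ exists because $N^-$ has an odd number of prime factors). Then the Shimura-curve generalization of \cite[Theorem 6.4]{R} combined with the fact that $f$ is $p$-isolated (a consequence of condition 4 in Assumption \ref{ass}) forces
\[ \dim_{\F_p}\bigl(\Z[\widehat R^\times\backslash\widehat B^\times/B^\times]^0\big/\fr m_f\bigr)\leq 1. \]

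For the lower bound, the natural plan is to exhibit an explicit non-zero element. By Jacquet--Langlands, the function $\phi=\phi(f)$ is a Hecke eigenform with $T(\phi)=f(T)\phi$ for every $T\in\T(N)$; extending $\Z$-linearly and reducing modulo $p$, $\bar\phi$ becomes an $\F_p$-valued eigenform with eigenvalues $\bar f(T)$. Consequently, the linear functional induced by $\bar\phi$ on $\Z[\widehat R^\times\backslash\widehat B^\times/B^\times]^0\otimes\F_p$ vanishes on the submodule $\fr m_f\cdot\Z[\widehat R^\times\backslash\widehat B^\times/B^\times]^0$.

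Now I would use the auxiliary prime $q$ fixed in \S\ref{hecke-operators-subsec}, for which $a_q\not\equiv q+1\pmod p$, i.e.\ $\bar f(\eta_q)\neq 0$, so that $\eta_q\notin\fr m_f$. Recall from \eqref{j-map-eq} the map $j=\eta_q\circ i:\widehat R^\times\backslash\widehat B^\times/B^\times\rightarrow\Z[\widehat R^\times\backslash\widehat B^\times/B^\times]^0$, and the formula $\tilde\phi\circ j\equiv\eta_q(\bar\phi)\pmod p$, which is a \emph{non-zero} multiple of $\bar\phi$. Pick any element $x\in\widehat R^\times\backslash\widehat B^\times/B^\times$ with $\bar\phi(x)\neq 0$; then $\tilde\phi\bigl(j(x)\bigr)\not\equiv 0\pmod p$, so $j(x)$ cannot lie in $\fr m_f\cdot\Z[\widehat R^\times\backslash\widehat B^\times/B^\times]^0$ and therefore produces a non-zero class in the quotient. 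This gives dimension $\geq 1$, and combined with the upper bound yields the claim.

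The main obstacle is really the upper bound, which rests on the Shimura-curve version of Ribet's multiplicity-one theorem; the $p$-isolation of $f$ (hence condition 4 in Assumption \ref{ass}) is essential to rule out extra $\fr m_f$-torsion. The lower bound, by contrast, is essentially a formal consequence of the Jacquet--Langlands eigenform property of $\phi$ together with the judicious choice of $q$: the role of $\eta_q$ is precisely to project away from the Eisenstein/degree direction without killing the $\fr m_f$-component, which is what allows the degree-zero divisors $j(x)$ to detect $\bar\phi$.
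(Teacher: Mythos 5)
Your proposal is correct and follows essentially the same route as the paper's own proof: the upper bound is taken from \S\ref{section-p-isolated} (character group of $J(X^{(r)})$ plus the Shimura-curve version of Ribet's multiplicity-one theorem, using $p$-isolation from condition 4 of Assumption \ref{ass}), and the lower bound comes from the non-vanishing of $\eta_q(\bar\phi)=\tilde\phi\circ j\pmod p$ producing a non-trivial class in the quotient. The only difference is that you make explicit the (correct, but implicit in the paper) step that $\tilde\phi$ kills $\fr m_f\cdot\Z[\widehat R^\times\backslash\widehat B^\times/B^\times]^0$ modulo $p$ because $\phi$ is a Hecke eigenform and $\fr m_f=\ker\bar f$.
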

\begin{proof} Since $\eta_q\bigl(\bar\phi\bigr)\neq 0$, the composition
\begin{equation} \label{*}
\widehat R^\times\backslash\widehat B^\times/B^\times\stackrel{j}{\longrightarrow}\Z\bigl[\widehat R^\times\backslash\widehat B^\times/B^\times\bigr]^0\;\longepi\;\Z\bigl[\widehat R^\times\backslash\widehat B^\times/B^\times\bigr]^0\big/\fr m_f
\end{equation}
is not zero. This shows in particular that the last quotient is not zero. On the other hand, its $\F_p$-dimension is at most one by the results explained in \S  \ref{section-p-isolated}, and the proposition follows. \end{proof}

Let $v_0$ be the vertex of $\cl T_\ell$ corresponding to the maximal order $\M_2(\Z_\ell)$. A vertex of $\cl T_\ell$ is said to be \emph{even} (respectively, \emph{odd}) if its distance from $v_0$ is even (respectively, odd). Since the elements of $\Gamma_\ell$ have determinant $1$, there is a well-defined notion of even and odd vertices in the graph $\cl G_\ell$ of \S \ref{dual-graph-subsec}. Define maps
\begin{equation} \label{orientation-G-ell-eq}
s,t:\cl E(\cl G_\ell)\longrightarrow\cl V(\cl G_\ell) 
\end{equation}
by requiring that $s(e)$ is the even vertex and $t(e)$ is the odd vertex of the edge $e$. By \eqref{vertexes-edges}, every $v\in\cl V(\cl G_\ell)$ can be regarded as a pair 
\begin{equation} \label{v-b-j-eq}
v=(b_v,i) 
\end{equation}
with $b_v\in\widehat R^\times\backslash\widehat B^\times/B^\times$ and $i\in\{0,1\}$. Moreover, as explained in the proof of \cite[Lemma 5.6]{BD}, we can assume that $i=0$ if and only if $v$ is even. Then define 
\[ \delta_*:\Z[\cl E(\cl G_\ell)]^0\longrightarrow\Z[\cl V(\cl G_\ell)]^0 \]
to be the restriction to $\Z[\cl E(\cl G_\ell)]^0$ of the $\Z$-linear map sending an edge $e$ to $t(e)-s(e)$. Observe that, with the above identifications, the map $\delta_*$ can be written as 
\[ \delta_*(e)=(b_{t(e)},1)-(b_{s(e)},0). \] 
The submodule ${\rm Im}(\delta_*)$ can be identified with the product of two copies of $\Z[\widehat R^\times\backslash\widehat B^\times/B^\times]^0$. Let us briefly review why this is true. The orientation on the edges of $\cl G_\ell$ chosen in \eqref{orientation-G-ell-eq} induces two maps
\[ \alpha_*, \beta_*:\Z[\cl E(\cl G_\ell)]\longrightarrow\Z[\widehat R^\times\backslash\widehat B^\times/B^\times]\times\{0,1\} \]
defined by extending $\Z$-linearly the rules $\alpha_\ast(e):=(b_{t(e)},1)$ and $\beta_\ast(e):=(-b_{s(e)},0)$. By restriction, $\alpha_\ast$ and $\beta_*$ give maps
\[ \alpha_*^0:\Z[\cl E(\cl G_\ell)]^0\longrightarrow\Z[\widehat R^\times\backslash\widehat B^\times/B^\times]^0\times\{1\},\qquad \beta_*^0:\Z[\cl E(\cl G_\ell)]^0\longrightarrow\Z[\widehat R^\times\backslash\widehat B^\times/B^\times]^0\times\{0\}. \]
Finally, set
\[ d_*:=(\alpha_*^0,\beta_*^0):\Z[\cl E(\cl G_\ell)]^0\longrightarrow\bigl(\Z[\widehat R^\times\backslash\widehat B^\times/B^\times]^0\times\{1\}\bigr)\times\bigl(\Z[\widehat R^\times\backslash\widehat B^\times/B^\times]^0\times\{0\}\bigr). \]
To ease the notation, denote the codomain of $d_*$ simply by $\bigl(\Z[\widehat R^\times\backslash\widehat B^\times/B^\times]^0\bigr)^2$. As remarked in \cite[Proposition 5.7]{BD}, it can be checked that $d_*$ is surjective and that the diagram
\[ \xymatrix@C=30pt{\Z[\cl E(\cl G_\ell)]^0\ar@{=}[d]\ar[r]^-{\delta_*}&\Z[\cl V(\cl G_\ell)]^0\ar@{^{(}->}[r]&\Z[\widehat R^\times\backslash\widehat B^\times/B^\times]^2\\
   \Z[\cl E(\cl G_\ell)]^0\ar@{->>}[rr]^-{d_\ast}&&\bigl(\Z[\widehat R^\times\backslash\widehat B^\times/B^\times]^0\bigr)^2\ar@{^{(}->}[u]} \]
is commutative, and this shows that there is an identification
\begin{equation} \label{Im-delta*-eq}
{\rm Im}(\delta_*)=\bigl(\Z[\widehat R^\times\backslash\widehat B^\times/B^\times]^0\times\{1\}\bigr)\times\bigl(\Z[\widehat R^\times\backslash\widehat B^\times/B^\times]^0\times\{0\}\bigr)\simeq\bigl(\Z[\widehat R^\times\backslash\widehat B^\times/B^\times]^0\bigr)^2. 
\end{equation}
Let $\T_\ell^\old$ be the quotient of $\T(N\ell)$ acting on forms which are old at $N^-$. It follows from the description of $\cl E(\cl G_\ell)$ in \eqref{vertexes-edges} that $\T_\ell^\old$ acts on $\Z[\cl E(\cl G_\ell)]$ by correspondences. By \cite[Proposition 5.8]{BD}, the quotient ${\rm Im}(\delta_*)$ of $\cl E(\cl G_\ell)$ is stable under the action of $\T_\ell^\old$. Denote $U'_\ell$ the $\ell$-operator in $\T_\ell^\old$, so $U'_\ell$ acts on ${\rm Im}(\delta_*)$. Then, by \cite[Section 7]{R} (see also \cite[Theorems 3.12, 4.3 and 5.2 (c)]{R} and \cite[Theorem 5.15]{BD}), the restriction of $\omega_\ell$ to ${\rm Im}(\delta_*)$ gives rise to an isomorphism of groups
\begin{equation} \label{omega}
\bar\omega_\ell:{\rm Im}(\delta_*)\big/\big\langle\fr m_f,(U'_\ell)^2-1\big\rangle\overset{\simeq}{\longrightarrow}\Phi_\ell/\fr m_{f_\ell}.
\end{equation}
Here recall that $f_\ell$ is the modular form introduced in \eqref{f-ell}, which is new at $\ell$ and congruent to $f$ modulo $p$, while $\fr m_{f_\ell}$ is its associated maximal ideal of residual characteristic $p$. By \cite[Theorem 3.19]{R} (see also
\cite[Theorem 5.15]{BD}), the action of $U'_\ell$ on ${\rm Im}(\delta_*)$ is given by $(x,y)\mapsto(T_\ell x-y,\ell x)$.
Therefore, since
\[ a_\ell\equiv\epsilon(\ell+1)\pmod{p} \]
with $\epsilon\in\{\pm1\}$, it follows that $U'_\ell+\epsilon$ is invertible on ${\rm Im}(\delta_*)$ while the image of $U'_\ell-\epsilon$ is the subset
\[ \Big\{(\epsilon x,x)\;\big|\;x\in\Z\bigl[\widehat R^\times\backslash\widehat B^\times/B^\times\bigr]^0\Big\}\subset{\rm Im}(\delta_*). \]
Combining this with Proposition \ref{one-dim-prop} and the isomorphism in \eqref{omega}, and recalling that ${\rm Im}(\delta_*)$ identifies with the product of two copies of $\Z[\widehat R^\times\backslash\widehat B^\times/B^\times]^0$, shows that
\begin{equation} \label{Phi-ell-2}
\Phi_\ell/\fr m_{f_\ell}\simeq\Z/p\Z.
\end{equation}
With $j$ as in \eqref{j-map-eq}, note that ${\rm Im}(j)$ can naturally be viewed as a submodule of ${\rm Im}(\delta_*)$ in two ways, either via $x\mapsto (x,0)$ or via $x\mapsto (0,x)$. Choose one of the two maps above, call it $\tilde\iota$ and denote by $\iota$ the map obtained by composing $\tilde\iota$ with the canonical projection, that is
\begin{equation} \label{***}
\iota:{\rm Im}(j)\overset{\tilde\iota}{\longrightarrow}{\rm Im}(\delta_*)\;\longepi\;{\rm Im}(\delta_*)\big/\big\langle\fr m_f,(U'_\ell)^2-1\big\rangle.
\end{equation}
Then we obtain a map
\begin{equation}\label{**}
\bar\omega_\ell\circ\iota\circ j:\widehat R^\times\backslash\widehat B^\times/B^\times\longrightarrow\Phi_\ell/\fr m_{f_\ell}\simeq\Z/p\Z.
\end{equation}
The above description of $\bigl((U'_\ell)^2-1\bigr){\rm Im}(\delta_*)$ and the fact that ${\rm Im}(j)/\fr m_f$ is not
trivial (because the map \eqref{*} is not) show that the map \eqref{**} is non-zero. Hence, since $f$ is $p$-isolated, we
conclude that $\bar\omega_\ell\circ\iota\circ j$ is equal to $\bar\phi$ up to multiplication by a constant in $(\Z/p\Z)^\times$.

\subsection{The explicit reciprocity law} \label{reciprocity-subsec}

By Lemma \ref{lemma-reduction}, $v_c:=r_\ell(P_c)$ is in $\cl V(\cl G_\ell)$, hence (with notation as in \eqref{v-b-j-eq}) it can be identified with a pair $(b_{v_c},i)$ in $\widehat R^\times\backslash\widehat B^\times/B^\times\times\{0,1\}$. By \cite[Proposition 5.8]{BD}, the action of $\eta_q$ on ${\rm Im}(\delta_*)$ is diagonal with respect to the decomposition described in \eqref{Im-delta*-eq}, so we can write
\[ \eta_q(v_c)=\bigl(\eta_q(b_{v_c}),i\bigr)\in\Z[\widehat R^\times\backslash\widehat B^\times/B^\times]^0\times\{i\} \]
with the same $i$ as before. Choose $\tilde\iota$ in \eqref{***} such that $\eta_q(v_c)\in{\rm Im}(\tilde\iota)$. Let
\[ \bar\partial_\ell:J^{(\ell)}(K_\ell)\longrightarrow\Phi_\ell/\fr m_{f_\ell}\simeq\Z/p\Z \]
be the composition of the map $\partial_\ell$ introduced in \eqref{partial-eq} with the canonical projection modulo $\fr m_{f_\ell}$ and the isomorphism \eqref{Phi-ell-2}. The operator $\eta_q$ defines a Hecke and Galois-equivariant map
\[ \eta_q:\Div(X^{(\ell)})\longrightarrow\Div^0(X^{(\ell)}). \]
For any $D\in\Div^0(X^{(\ell)})$ write $[D]$ for the class of $D$ in $J^{(\ell)}$, and set
\[ \xi_c:=[\eta_q(P_c)]\in J^{(\ell)}(H_c). \]
Define
\[ \alpha_c:=\sum_{\sigma\in G_c}\xi_c^\sigma\otimes\sigma^{-1}\in J^{(\ell)}(H_c)\otimes_\Z\Z[G_c]. \]
Recall the embedding $i_{\lambda_0}:H_c\hookrightarrow H_{c,\lambda_0}$ associated to the prime $\lambda_0$. We remark in
passing that, since $\ell$ splits completely in $H_c$, one has $i_\lambda=\sigma_\lambda\circ i_{\lambda_0}$ for all primes
$\lambda|\ell$ (here the Galois elements $\sigma_\lambda$ are as in \eqref{bijection-inverse-eq}). By an abuse of notation, denote in the same way the global-to-local homomorphism
\[ i_{\lambda_0}:J^{(\ell)}(H_c)\;\longmono\;J^{(\ell)}(H_{c,\lambda_0}) \]
induced on the points of the jacobian, and keep in mind that for all primes $\lambda|\ell$ there are canonical identifications $H_{c,\lambda}=K_\ell=\Q_{\ell^2}$. Viewing the maps $i_\lambda$ as $K_\ell$-valued via these identifications (which we will sometimes do without explicit warning) one then has $i_\lambda=i_{\lambda_0}$ for all primes $\lambda|\ell$.

The above discussion combined with \eqref{edix} yields the equality
\begin{equation} \label{edix-II}
\bigl(\bar\partial_\ell\circ i_{\lambda_0}\bigr)(\xi_c)=(\bar\omega_\ell\circ\iota\circ j)(v_c)\in\Z/p\Z.
\end{equation}
As a piece of notation, for any ring $A$ and any pair of elements $a,b\in A$ let $a\doteq b$ mean that there exists $c\in A^\times$ such that $a=bc$. Moreover, let $[\star]$ denote the class of the element $\star$ in a quotient group.

\begin{teo} \label{rec-law}
The relation
\[ \bigl((\bar\partial_\ell\circ i_{\lambda_0})\otimes\chi\bigr)(\alpha_c)\doteq\bigl[\cl L(f,\chi)\bigr] \]
holds in $\Z[\chi]\big/p\Z[\chi]$.
\end{teo}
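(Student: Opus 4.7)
The plan is to unfold $\alpha_c$ against the map $(\bar\partial_\ell\circ i_{\lambda_0})\otimes\chi$, apply the explicit formula \eqref{edix-II} to each Galois conjugate of $\xi_c$, and recognize the resulting expression as $\cl L(f,\chi)$ modulo $p$ times a unit. By definition of $\alpha_c$ and of the tensor product map,
\[
\bigl((\bar\partial_\ell\circ i_{\lambda_0})\otimes\chi\bigr)(\alpha_c)=\sum_{\sigma\in G_c}\chi^{-1}(\sigma)\cdot(\bar\partial_\ell\circ i_{\lambda_0})(\xi_c^\sigma)
\]
in $\Z[\chi]/p\Z[\chi]$. Since $\sigma\in G_c$ fixes $K$, the embeddings of $H_c$ into $K_\ell$ are related by $i_{\lambda_0}\circ\sigma=i_{\sigma^{-1}(\lambda_0)}$, so each summand equals $(\bar\partial_\ell\circ i_{\sigma^{-1}(\lambda_0)})(\xi_c)$. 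Running the argument of \eqref{edix-II} with $\sigma^{-1}(\lambda_0)$ in place of $\lambda_0$ then rewrites this as $(\bar\omega_\ell\circ\iota\circ j)\bigl(v_c^{(\sigma)}\bigr)$, where $v_c^{(\sigma)}\in\cl V(\cl G_\ell)$ is the reduction of $P_c$ via the $\sigma^{-1}(\lambda_0)$-adic embedding.

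The crux of the proof is to identify $v_c^{(\sigma)}$ in the moduli-theoretic description \eqref{vertexes-edges} of $\cl V(\cl G_\ell)$. Combining Lemma \ref{lemma-reduction} (which pins down $v_c^{(\sigma)}$ as the unique vertex fixed by the corresponding $\sigma^{-1}(\lambda_0)$-adic embedding $\varphi_\ell$ of $K_\ell^\times$) with the description of the $G_c$-action on Heegner points through the optimal embedding $\psi$ coming from the reduction of CM endomorphisms, I would show that if $v_c=(b_{v_c},i_0)$ then
\[
v_c^{(\sigma)}=\bigl(\widehat\psi(\sigma)\,b_{v_c},\,i_0\bigr).
\]
The $\{0,1\}$-component is unchanged because the action of $G_c$ on $\cl T_\ell/\Gamma_\ell$ is realized by norm-one elements and hence preserves the bipartition of $\cl V(\cl G_\ell)$.

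Finally, invoking the relation $\bar\omega_\ell\circ\iota\circ j\doteq\bar\phi$ established at the end of \S \ref{hecke-operators-subsec}, and absorbing the fixed element $b_{v_c}$ into the choice of optimal embedding $\psi$ implicit in the definition of $\cl L(f,\chi)$, the sum reads
\[
\bigl((\bar\partial_\ell\circ i_{\lambda_0})\otimes\chi\bigr)(\alpha_c)\doteq\sum_{\sigma\in G_c}\chi^{-1}(\sigma)\cdot\bar\phi\bigl(\widehat\psi(\sigma)\bigr)=\bigl[\cl L(f,\chi)\bigr]
\]
in $\Z[\chi]/p\Z[\chi]$, which is the required relation. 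The main obstacle I expect is the precise identification $v_c^{(\sigma)}=(\widehat\psi(\sigma)\,b_{v_c},i_0)$: it requires transporting the Galois action on $J^{(\ell)}(H_c)$ through the \v{C}erednik--Drinfeld uniformization and matching it with left multiplication by $\widehat\psi(\sigma)$ on $\widehat R^\times\backslash\widehat B^\times/B^\times$, while checking that the parity component is preserved. Once this reduction-theoretic step is in place, the rest of the argument is a formal manipulation of the sum.
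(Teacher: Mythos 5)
Your proposal takes essentially the same route as the paper's proof, which is very terse: the paper simply observes that the result "follows from \eqref{edix-II} and the definition of $\cl L(f,\chi)$, after noticing that $r_\ell$ is Galois-equivariant," together with the identification $\bar\omega_\ell\circ\iota\circ j\doteq\bar\phi$. What you have done is to unpack the precise content of "$r_\ell$ is Galois-equivariant" as the identity $v_c^{(\sigma)}=(\widehat\psi(\sigma)\,b_{v_c},i_0)$, which is indeed the crux. One small imprecision: the parity component is preserved not because the relevant elements are of norm one (they are not — they lie in $\widehat K^\times$, not in the norm-one subgroup), but because $\ell$ is inert in $K$ and $\ell\nmid c$, so the $\ell$-adic component of any idele representing $\sigma\in\Pic(\cl O_c)$ can be taken trivial; the action of $\widehat\psi(\sigma)$ therefore does not move vertices across the bipartition of $\cl T_\ell$. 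Finally, the unit $b_{v_c}$ is harmless here because the left translate $\widehat\psi(\cdot)\,b_{v_c}$ shifts the $\chi^{-1}$-weighted sum by a fixed character value, which is exactly the sort of constant absorbed by the $\doteq$ relation (and, in effect, by the normalization of $\psi$ via the choice of representative $R$ made in the reduction-of-Heegner-points discussion).
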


\begin{proof} Since the maps $\bar\omega_\ell\circ\iota\circ j$ and $\bar\phi $ are equal up to multiplication by an element in $(\Z/p\Z)^\times$, the result follows from \eqref{edix-II} and the definition of $\cl L(f,\chi)$, after noticing that $r_\ell$ is Galois-equivariant. \end{proof}

\section{Algebraic preliminaries} \label{algebraic-sec}

\subsection{Towards the vanishing of ring class eigenspaces}

Let $\chi\in\widehat G_c$ be our complex-valued character of $G_c$. The prime $p$ is unramified in $\Z[\chi]$ since it does not divide $h(c)$ by condition $1$ in Assumption \ref{ass}. Choose a prime ideal $\fr p$ of $\Z[\chi]$ above $p$ such that
\begin{equation} \label{alg-nonzero-eq}
\text{the image of $\cl L(f,\chi)$ in $\Z[\chi]/\fr p$ is not zero.}
\end{equation}
This can be done thanks to condition $3$ in Assumption \ref{ass}. Denote $\W$ the $\fr p$-adic completion of $\Z[\chi]$ and observe that the non-ramification of $p$ implies that $p\W$ is the maximal ideal of $\W$; in particular,
\[ \Z[\chi]/\fr p=\W/p\W. \]
For any $\Z[G_c]$-module $M$ write $M\otimes_\chi\C$ (respectively, $M\otimes_\chi\W$) for the tensor product of the $\Z[G_c]$-modules $M$ and $\C$ (respectively, $M$ and $\W$), where the structure of $\Z[G_c]$-module on $\C$ (respectively, $\W$) is induced by $\chi$.

As in the introduction, if $M$ is a $\Z[G_c]$-module define
\[ M^\chi:=\bigl\{x\in M\otimes_\Z\mathcal\C\mid\text{$\sigma(x)=\chi(\sigma)x$ for all $\sigma\in G_c$}\bigr\}. \]
The next elementary algebraic result (of which we sketch a proof for lack of an explicit reference) will be repeatedly used in the sequel.

\begin{prop} \label{alg-1}
If $M$ is a $\Z[G_c]$-module which is finitely generated as an abelian group then there is a canonical identification
\[ M^\chi=M\otimes_\chi\C \]
of $\C[G_c]$-modules.
\end{prop}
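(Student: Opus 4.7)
My plan is to leverage the fact that $G_c$ is a finite abelian group whose order $h(c)$ is invertible in $\C$, so that $\C[G_c]$ is semisimple and splits as a product of one-dimensional characters via the idempotents
\[
e_\psi := \frac{1}{|G_c|}\sum_{\sigma\in G_c}\psi(\sigma^{-1})\,\sigma, \qquad \psi\in\widehat G_c.
\]
Since $M$ is finitely generated over $\Z$, the $\C[G_c]$-module $V:=M\otimes_\Z\C$ decomposes canonically as $V=\bigoplus_{\psi\in\widehat G_c}e_\psi V$, and by construction $e_\chi V=M^\chi$.

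The plan is to exhibit two mutually inverse $\C[G_c]$-linear maps between $M^\chi$ and $M\otimes_\chi\C$. First, I would define
\[
\Phi\colon M^\chi\;\longrightarrow\;M\otimes_\chi\C
\]
as the restriction to $M^\chi\subset V$ of the canonical surjection $V=M\otimes_\Z\C\twoheadrightarrow M\otimes_{\Z[G_c]}\C=M\otimes_\chi\C$ (coming from the composition $\Z\hookrightarrow\Z[G_c]\xrightarrow{\chi}\C$). Second, I would define
\[
\Psi\colon M\otimes_\chi\C\;\longrightarrow\;M^\chi,\qquad m\otimes z\longmapsto z\cdot e_\chi(m\otimes 1),
\]
using the fact that $\sigma\cdot e_\chi=\chi(\sigma)\,e_\chi$ forces $e_\chi(V)\subset M^\chi$. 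Well-definedness of $\Psi$ on the $\Z[G_c]$-tensor product reduces to the identity $e_\chi(\sigma m\otimes 1)=\chi(\sigma)\cdot e_\chi(m\otimes 1)$, which is the same relation read in the other variable.

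To check the compositions, $\Psi\circ\Phi=\mathrm{id}_{M^\chi}$ is immediate from $e_\chi x=x$ for $x\in M^\chi$. For $\Phi\circ\Psi=\mathrm{id}$, I would use the relation $\sigma m\otimes 1=m\otimes\chi(\sigma)$ valid in $M\otimes_\chi\C$ to compute
\[
\Phi\bigl(e_\chi(m\otimes 1)\bigr)=\frac{1}{|G_c|}\sum_{\sigma\in G_c}\chi(\sigma^{-1})\,(\sigma m\otimes 1)=\frac{1}{|G_c|}\sum_{\sigma\in G_c}(m\otimes 1)=m\otimes 1
\]
in $M\otimes_\chi\C$, whence $\Phi\Psi(m\otimes z)=m\otimes z$. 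Finally, the $\C[G_c]$-module structures match because on both sides $\sigma$ acts by multiplication by $\chi(\sigma)$ — by definition on $M^\chi$, and on $M\otimes_\chi\C$ by the computation $\sigma(m\otimes z)=\sigma m\otimes z=m\otimes\chi(\sigma)z=\chi(\sigma)(m\otimes z)$.

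There is no real obstacle here: the statement is a formal consequence of the semisimplicity of $\C[G_c]$ and of the fact that $\chi$ is one-dimensional. The only thing to watch is that the two descriptions of the $G_c$-action coincide, and that the idempotent relations are applied in the correct tensor variable when verifying $\Phi\circ\Psi=\mathrm{id}$.
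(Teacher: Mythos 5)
Your proof is correct and follows essentially the same route as the paper: the paper restricts the canonical surjection $M\otimes_\Z\C\twoheadrightarrow M\otimes_\chi\C$ to the $\chi$-eigenspace of the Maschke decomposition and asserts that a map in the other direction, obtained from the universal property, is inverse to it. You simply make that inverse explicit via the idempotent $e_\chi$ and verify the two compositions, which is a complete (and slightly more detailed) version of the argument sketched in the paper.
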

\begin{proof} By the universal property of tensor products, there is a natural $G_c$-equivariant surjection
\[ F:M\otimes_\Z\C\longrightarrow M\otimes_\chi\C \]
of finite-dimensional $\C$-vector spaces. Moreover, Maschke's theorem ensures that $M\otimes_\Z\C$ decomposes as a direct sum
\[ M\otimes_\Z\C=\bigoplus_{\gamma\in\widehat G_c}M^\gamma \]
of primary representations, and $F$ induces a $G_c$-equivariant (surjective) map
\[ f=F|_{M^\chi}:M^\chi\longrightarrow M\otimes_\chi\C. \]
Finally, again by universality one obtains a natural $G_c$-equivariant map
\[ g:M\otimes_\chi\C\longrightarrow M^\chi \]
of $\C$-vector spaces, and it can be checked that $f$ and $g$ are inverses of each other. \end{proof}
Choose once and for all an (algebraic) isomorphism $\C_p\simeq\C$ which is the identity on $\Z[\chi]$. Henceforth we shall view $\C$ as a $\W$-module via this isomorphism, obtaining an isomorphism
\begin{equation} \label{tensor-can-eq}
\bigl(E(H_c)\otimes_\chi\W\bigr)\otimes_\W\C\simeq E(H_c)\otimes_\chi\C.
\end{equation}
For later reference, we state the following

\begin{lemma} \label{alg-2}
The module $\W$ is flat over $\Z[G_c]$, and every $\F_p[G_c]$-module is flat.
\end{lemma}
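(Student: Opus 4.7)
The plan is to exploit condition 1 of Assumption \ref{ass} --- namely $p \nmid h(c) = |G_c|$ --- so that Maschke-type semisimplicity arguments apply throughout. The second claim is then almost immediate: since $p \nmid |G_c|$, Maschke's theorem makes $\F_p[G_c]$ a semisimple ring, and over a semisimple ring every module is a direct sum of simples, in particular projective, hence flat.

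For the first claim I would first reduce the flatness of $\W$ over $\Z[G_c]$ to the flatness of $\W$ over $\Z_p[G_c]$. This is legitimate because $\Z_p[G_c] = \Z[G_c] \otimes_\Z \Z_p$ is flat over $\Z[G_c]$ (base change along the flat morphism $\Z \hookrightarrow \Z_p$), and flatness composes.

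The remaining step is the one deserving care, and it is the main obstacle. The plan is to invoke the structural decomposition of $\Z_p[G_c]$ for a finite abelian group of order prime to $p$: Wedderburn for $\Q_p[G_c]$ together with Hensel's lemma yields a canonical product decomposition
\[
\Z_p[G_c] \;\simeq\; \prod_{[\psi]} \cl O_\psi,
\]
where $[\psi]$ ranges over $\Gal(\bar\Q_p/\Q_p)$-orbits of characters of $G_c$ and $\cl O_\psi$ is the ring of integers of the unramified extension of $\Q_p$ generated by the values of $\psi$. Since the order of $\chi$ divides $h(c)$, the prime $p$ is unramified in $\Z[\chi]$, and the $\fr p$-adic completion $\W$ is identified with the factor $\cl O_\chi$ corresponding to the $\Gal(\bar\Q_p/\Q_p)$-orbit of $\chi$ singled out by $\fr p$. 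Under this identification, the map $\Z_p[G_c] \to \W$ induced by $\chi$ is precisely the projection onto that factor, so $\W = e \cdot \Z_p[G_c]$ for an idempotent $e$. In particular $\W$ is a direct summand of $\Z_p[G_c]$, hence projective, hence flat. The key bookkeeping is verifying that the choice of $\fr p$ made at the beginning of Section \ref{algebraic-sec} is indeed the choice that selects the $\cl O_\chi$ factor; once this compatibility is in place, both assertions of the lemma follow.
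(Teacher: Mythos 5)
Your proof is correct, but it takes a genuinely different route from the paper's. The paper disposes of both assertions in two lines by invoking a general flatness criterion of Benson and Goodearl \cite[Theorem 1.6]{BG} for modules over group rings of finite groups: it suffices to check that $\W$ is flat over $\Z$ and that $\W/\ell\W=0$ for every prime $\ell$ dividing $h(c)$ (which holds because any such $\ell$ is different from $p$, hence invertible in the $\Z_p$-algebra $\W$); the statement about $\F_p[G_c]$-modules is handled identically. You instead give a self-contained structural argument: Maschke's theorem for the semisimplicity of $\F_p[G_c]$, and for the first claim the idempotent decomposition $\Z_p[G_c]\simeq\prod_{[\psi]}\cl O_\psi$ into rings of integers of unramified extensions (valid precisely because $p\nmid h(c)$), which exhibits $\W$ as the direct factor cut out by the $p$-adic avatar of $\chi$ determined by $\fr p$, hence as a projective $\Z_p[G_c]$-module; transitivity of flatness along the flat base change $\Z[G_c]\to\Z_p[G_c]$ then finishes the argument. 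All the steps check out: the $\Z[G_c]$-structure on $\W$ is indeed the restriction of the $\Z_p[G_c]$-structure, the surjectivity of $\Z_p[G_c]\to\W$ follows since the values of $\chi$ together with $\Z_p$ generate $\W$ topologically, and the compatibility with the choice of $\fr p$ made at the start of Section \ref{algebraic-sec} is exactly as you describe. Your approach buys an explicit description (in fact projectivity, not merely flatness, of $\W$ over $\Z_p[G_c]$) at the cost of length; the paper's citation-based argument is shorter but opaque without consulting \cite{BG}. Both hinge on the same arithmetic input, namely condition 1 of Assumption \ref{ass}.
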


\begin{proof} First of all, $\W$ is flat over $\Z$. Moreover, if $\ell$ is a prime number dividing $h(c)$ then $\ell\not=p$, hence $\W/\ell\W=0$. The flatness of $\W$ follows from \cite[Theorem 1.6]{BG}. The second assertion can be shown in exactly the same way. \end{proof}

Now we can prove

\begin{prop} \label{alg-4}
If $\Sel_p(E/H_c)\otimes_\chi\W=0$ then $E(H_c)^\chi=0$.
\end{prop}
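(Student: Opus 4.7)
The plan is to combine the classical descent sequence for the $p$-Selmer group with the flatness statement in Lemma \ref{alg-2}, Nakayama's lemma for the complete DVR $\W$, and the identification provided by Proposition \ref{alg-1}.

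First I would invoke the short exact sequence of $\Z[G_c]$-modules coming from Kummer theory,
\[
0\longrightarrow E(H_c)/pE(H_c)\longrightarrow\Sel_p(E/H_c)\longrightarrow\Sha_p(E/H_c)\longrightarrow 0,
\]
and apply the functor $-\otimes_\chi\W$. By Lemma \ref{alg-2}, $\W$ is flat over $\Z[G_c]$, so tensoring preserves injectivity and yields an inclusion
\[
\bigl(E(H_c)/pE(H_c)\bigr)\otimes_\chi\W\;\longmono\;\Sel_p(E/H_c)\otimes_\chi\W=0.
\]
Hence $\bigl(E(H_c)/pE(H_c)\bigr)\otimes_\chi\W=0$. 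Since tensor product commutes with the cokernel of multiplication by $p$, this can be rewritten as
\[
\bigl(E(H_c)\otimes_\chi\W\bigr)\big/p\bigl(E(H_c)\otimes_\chi\W\bigr)=0.
\]

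Next I would apply Nakayama's lemma. The Mordell--Weil group $E(H_c)$ is finitely generated over $\Z$, hence over $\Z[G_c]$, so $E(H_c)\otimes_\chi\W$ is finitely generated over $\W$. Because $p$ is unramified in $\Z[\chi]$, the ideal $p\W$ is the maximal ideal of the complete discrete valuation ring $\W$, and Nakayama's lemma forces
\[
E(H_c)\otimes_\chi\W=0.
\]

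Finally, tensoring this vanishing with $\C$ over $\W$ via the fixed embedding $\W\hookrightarrow\C$ and using the canonical isomorphism \eqref{tensor-can-eq}, I get $E(H_c)\otimes_\chi\C=0$. Proposition \ref{alg-1}, applied to the finitely generated $\Z[G_c]$-module $M=E(H_c)$, identifies this tensor product with the $\chi$-eigenspace, so $E(H_c)^\chi=0$, as desired. There is no real obstacle here: the only point that requires care is making sure the flatness of $\W$ (and not just its flatness over $\Z$) is available in order to tensor the Kummer sequence, which is exactly what Lemma \ref{alg-2} provides.
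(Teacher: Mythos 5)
Your proof is correct and follows essentially the same route as the paper's: the injection $E(H_c)/pE(H_c)\hookrightarrow\Sel_p(E/H_c)$ tensored with the flat module $\W$ (Lemma \ref{alg-2}), Nakayama's lemma for the local ring $\W$ with maximal ideal $p\W$, and the identification of $E(H_c)\otimes_\chi\C$ with $E(H_c)^\chi$ via Proposition \ref{alg-1} and \eqref{tensor-can-eq}. The only cosmetic difference is that you argue forward from the full Kummer sequence while the paper reduces backward, needing only the injectivity of $\delta$ rather than the whole short exact sequence.
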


\begin{proof} By Proposition \ref{alg-1} and equation \eqref{tensor-can-eq}, it is enough to show that
\[ E(H_c)\otimes_\chi\W=0. \]
Being finitely generated over $\Z$, the module $E(H_c)$ is \emph{a fortiori} finitely generated over $\Z[G_c]$, hence $E(H_c)\otimes_\chi\W$ is finitely generated as a $\W$-module. But $p\W$ is the maximal ideal of $\W$, so Nakayama's lemma ensures that the vanishing of $E(H_c)\otimes_\chi\W$ is equivalent to the vanishing of
\[ \bigl(E(H_c)\otimes_\chi\W\bigr)\otimes_\W(\W/p\W)\simeq E(H_c)\otimes_\chi(\W/p\W)\simeq\bigl(E(H_c)/pE(H_c)\bigr)\otimes_\chi\W. \]
By Lemma \ref{alg-2}, $\W$ is flat over $\Z[G_c]$. Tensoring the $G_c$-equivariant injection $E(H_c)/pE(H_c)\hookrightarrow\Sel_p(E/H_c)$ with $\W$ over $\Z[G_c]$ then yields an injection
\[ \bigl(E(H_c)/pE(H_c)\bigr)\otimes_\chi\W\;\longmono\;\Sel_p(E/H_c)\otimes_\chi\W. \]
Since $\Sel_p(E/H_c)\otimes_\chi\W=0$ by assumption, the proposition is proved. \end{proof}

Thus the triviality of $E(H_c)^\chi$ is guaranteed by that of $\Sel_p(E/H_c)\otimes_\chi\W$. This vanishing will be proved in Theorem \ref{th-sel-n}. To this end, we need a couple of further algebraic lemmas, to which the next $\S$ is devoted.

\subsection{Towards the vanishing of Selmer groups}

In the following, we adopt the same symbol $\chi$ to denote the $\Z$-linear extension
\[ \Z[G_c]\overset{\chi}{\longrightarrow}\Z[\chi]\subset\W \]
of the character $\chi$. If we compose $\chi$ with the projection onto $\W/p\W$ then the resulting homomorphism factors through $\F_p[G_c]=\Z[G_c]/p\Z[G_c]$, and the triangle
\[ \xymatrix@R=35pt{\Z[G_c]\ar[r]^-\chi\ar@{->>}[d]&\W\ar@{->>}[r]&\W/p\W\\
          \F_p[G_c]\ar[urr]^-{\chi_p}&&} \]
is commutative. In particular, the map $\chi_p$ gives $\W/p\W$ a structure of $\F_p[G_c]$-module (which is nothing but the structure induced naturally by that of $\Z[G_c]$-module on $\W$), and for an $\F_p[G_c]$-module $M$ the notation $M\otimes_{\chi_p}(\W/p\W)$ will be used to indicate that the tensor product is taken over $\F_p[G_c]$ via $\chi_p$.

Write $I_{\chi_p}$ for the kernel of $\chi_p$, and for any $\F_p[G_c]$-module $M$ let
\[ M[I_{\chi_p}]:=\bigl\{m\in M\mid \text{$xm=0$ for all $x\in I_{\chi_p}$}\bigr\} \]
be the $I_{\chi_p}$-torsion submodule of $M$. Moreover, let $\chi_p^{-1}:\F_p[G_c]\rightarrow\W/p\W$ be the map induced by the inverse character to $\chi$. Equivalently, for any $a=\sum_\sigma a_\sigma\sigma\in\F_p[G_c]$ set $a^{-1}:=\sum_\sigma a_\sigma\sigma^{-1}$. Since $G_c$ is abelian, the map $\sigma\mapsto\sigma^{-1}$ is an automorphism of $G_c$ which induces an algebra automorphism $\varpi$ of $\F_p[G_c]$ sending $a$ to $a^{-1}$. Then there is an equality of maps
\[ \xymatrix@C=30pt{\F_p[G_c]\ar[r]_-{\simeq}^-{\varpi}\ar@/^1.7pc/[rr]^-{\chi_p^{-1}} & \F_p[G_c]\ar[r]^-{\chi_p} & \W/p\W.} \]
Finally, write $I_{\chi_p^{-1}}$ for the kernel of $\chi_p^{-1}$ and for any $\F_p[G_c]$-module $M$ denote by $M\bigl[I_{\chi_p^{-1}}\bigr]$ the $I_{\chi_p^{-1}}$-torsion submodule of $M$.  

\begin{lemma} \label{lemma10.1}
For every $\F_p[G_c]$-module $M$ there are canonical identifications
\[ M\otimes_\chi\W=M\otimes_{\chi_p}(\W/p\W)=M[I_{\chi_p}]\otimes_{\chi_p}(\W/p\W)=M[I_{\chi_p}]\otimes_\chi\W \]
of $\W$-modules.
\end{lemma}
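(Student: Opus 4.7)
The statement packages four isomorphisms, but the outer two are essentially bookkeeping and the middle one is the genuine content. My plan is to handle the outer equalities by a uniform $p$-torsion argument, and to deduce the middle equality from the semisimplicity of $\F_p[G_c]$ (available because $p\nmid h(c)$).

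For the first and fourth identifications I would observe that for \emph{any} $\F_p[G_c]$-module $N$, the chain of natural identifications
\[ N\otimes_\chi\W\;=\;N\otimes_{\Z[G_c]}\W\;=\;N\otimes_{\Z[G_c]}(\W/p\W)\;=\;N\otimes_{\F_p[G_c]}(\W/p\W)\;=\;N\otimes_{\chi_p}(\W/p\W) \]
holds: the middle step uses that $pN=0$ forces $m\otimes(pw)=(pm)\otimes w=0$, and the next step uses that the $\Z[G_c]$-action on $\W/p\W$ factors through $\F_p[G_c]$ via $\chi_p$ by construction. Applying this to $N=M$ yields the first equality and applying it to $N=M[I_{\chi_p}]$ yields the fourth.

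For the middle identification I would invoke Maschke's theorem: since condition $(1)$ of Assumption \ref{ass} gives $p\nmid h(c)=\#G_c$, the algebra $\F_p[G_c]$ is semisimple, and being commutative it is a finite product of field extensions of $\F_p$. The map $\chi_p$ is surjective (because $\chi\colon\Z[G_c]\twoheadrightarrow\Z[\chi]$ is), so its kernel $I_{\chi_p}$ is a direct factor; equivalently there is a central idempotent $e_\chi\in\F_p[G_c]$ with $I_{\chi_p}=(1-e_\chi)\F_p[G_c]$ and with $\chi_p$ inducing a ring isomorphism $e_\chi\F_p[G_c]\simeq\W/p\W$. For any $\F_p[G_c]$-module $M$ the resulting splitting $M=e_\chi M\oplus(1-e_\chi)M$ identifies the two summands as $M[I_{\chi_p}]$ and $I_{\chi_p}M$ respectively (one direction is clear; for the other, if $m\in M[I_{\chi_p}]$ then $(1-e_\chi)m=0$, so $m=e_\chi m\in e_\chi M$). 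Tensoring with $\W/p\W$ over $\F_p[G_c]$ via $\chi_p$ kills the factor $I_{\chi_p}M$ (since $I_{\chi_p}$ acts by zero on $\W/p\W$), so the inclusion $M[I_{\chi_p}]\hookrightarrow M$ becomes an isomorphism after $\,\otimes_{\chi_p}(\W/p\W)$, which is exactly the middle equality.

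The only step that is not a formal tensor manipulation is the production of the idempotent $e_\chi$, and this is the ``main obstacle'' only in the mild sense that one has to verify it exists over $\F_p$ itself rather than over an extension; this is automatic because $\W/p\W=\Z[\chi]/\fr p$ is generated over $\F_p$ by the values of $\chi$, so $\chi_p$ is surjective and Maschke gives the required splitting directly over $\F_p[G_c]$. Once $e_\chi$ is in hand all four objects in the displayed chain are canonically identified with $e_\chi M$, and the lemma follows.
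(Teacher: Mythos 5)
Your proof is correct, but it takes a genuinely different technical route from the paper's. The paper proves the middle identification by choosing generators $x_1,\dots,x_m$ of $I_{\chi_p}$, tensoring each short exact sequence $0\to K_i\to M\to x_iM\to 0$ (with $K_i=\ker\mu_{x_i}$) against $\W/p\W$, observing that $x_iM\otimes_{\chi_p}(\W/p\W)=0$ so $M\otimes_{\chi_p}(\W/p\W)=K_i\otimes_{\chi_p}(\W/p\W)$, and then using flatness of $\W/p\W$ over $\F_p[G_c]$ (Lemma \ref{alg-2}, itself appealing to a theorem of Benson--Goodearl) to pull the finite intersection $\bigcap_i K_i=M[I_{\chi_p}]$ through the tensor product. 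You instead use Maschke's theorem directly to split $\F_p[G_c]$ by a central idempotent $e_\chi$, identify $M[I_{\chi_p}]$ with $e_\chi M$, and kill the complementary summand $(1-e_\chi)M$ under $\otimes_{\chi_p}(\W/p\W)$. The two arguments rest on the same arithmetic input ($p\nmid h(c)$, hence $\F_p[G_c]$ is semisimple), but yours is more hands-on: it produces the splitting explicitly rather than invoking the abstract flatness statement, and it canonically identifies all four modules in the chain with the concrete summand $e_\chi M$. The paper's route is shorter on the page because Lemma \ref{alg-2} is already available, and it avoids having to verify that the idempotent lives over $\F_p$ itself; your route has the advantage of making the mechanism visible and of spelling out the ``straightforward algebraic argument'' that the paper leaves to the reader for the outer two equalities. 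Both are sound.
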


\begin{proof} Since $\F_p[G_c]$ is a (commutative) noetherian ring, we can choose generators $x_1,\dots,x_m$ of the ideal $I_{\chi_p}$. For all $i=1,\dots,m$ there is a short exact sequence
\[ 0\longrightarrow K_i:=\ker(\mu_{x_i})\longrightarrow M\xrightarrow{\mu_{x_i}}x_iM\longrightarrow 0, \]
where $\mu_{x_i}$ is the multiplication-by-$x_i$ map. Since $\W/p\W$ is flat over $\F_p[G_c]$ by Lemma \ref{alg-2}, tensoring the above sequence with $\W/p\W$ over $\F_p[G_c]$ gives an equality
\[ M\otimes_{\chi_p}(\W/p\W)=K_i\otimes_{\chi_p}(\W/p\W) \]
for all $i=1,\dots,m$. But $\cap_iK_i=M[I_{\chi_p}]$, hence
\[ \begin{split} 
M\otimes_{\chi_p}(\W/p\W)&=\bigcap_{i=1}^m\bigl(K_i\otimes_{\chi_p}(\W/p\W)\bigr)\\
&=\Big(\bigcap_{i=1}^mK_i\Big)\otimes_{\chi_p}(\W/p\W)=M[I_{\chi_p}]\otimes_{\chi_p}(\W/p\W), 
\end{split} \]
where the second equality is a consequence of the flatness of $\W/p\W$. 

Finally, a straightforward algebraic argument shows that there is an identification
\[ N\otimes_\chi\W=N\otimes_{\chi_p}(\W/p\W) \]
of $\W$-modules for every $\F_p[G_c]$-module $N$, and the lemma is completely proved. \end{proof}

\begin{lemma} \label{lemma-injection}
If $M$ is an $\F_p[G_c]$-module then $M[I_{\chi_p}]$ injects into $M\otimes_\chi\W$. Moreover, $M\bigl[I_{\chi^{-1}_p}\bigr]$ injects into $M\bigl[I_{\chi^{-1}_p}\bigr]\otimes_\chi\W$.
\end{lemma}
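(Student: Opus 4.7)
The proof will rest on one elementary algebraic fact: if $\varphi \colon A \twoheadrightarrow B$ is a surjective homomorphism of commutative rings and $N$ is a $B$-module (regarded as an $A$-module via $\varphi$), then the canonical map $n \mapsto n \otimes 1$ gives an isomorphism $N \simeq N \otimes_A B$, with inverse $n \otimes b \mapsto bn$; well-definedness of the inverse uses the surjectivity of $\varphi$.

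First I would observe that $\chi_p \colon \F_p[G_c] \to \W/p\W$ is surjective. Indeed, $\W/p\W = \Z[\chi]/\fr p$ is generated as an $\F_p$-algebra by the reductions of the values of $\chi$, all of which lie in the image of $\chi_p$; the same argument shows that $\chi_p^{-1}$ is surjective as well. Consequently there are canonical isomorphisms $\F_p[G_c]/I_{\chi_p} \simeq \W/p\W$ and $\F_p[G_c]/I_{\chi_p^{-1}} \simeq \W/p\W$, so that both $M[I_{\chi_p}]$ and $M[I_{\chi_p^{-1}}]$ inherit a natural $\W/p\W$-module structure (via $\chi_p$ and $\chi_p^{-1}$ respectively).

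For the first assertion, I would invoke Lemma \ref{lemma10.1} to rewrite $M \otimes_\chi \W$ as $M[I_{\chi_p}] \otimes_{\chi_p} (\W/p\W)$. The algebraic fact above, applied to $A = \F_p[G_c]$, $B = \W/p\W$, $\varphi = \chi_p$ and $N = M[I_{\chi_p}]$, identifies this tensor product canonically with $M[I_{\chi_p}]$ itself. Chasing through the identifications, the natural composite $M[I_{\chi_p}] \hookrightarrow M \to M \otimes_\chi \W$ becomes the identity on $M[I_{\chi_p}]$, which is therefore injective (and is in fact an isomorphism onto $M[I_{\chi_p}] \otimes_\chi \W$).

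The \emph{moreover} part is handled by the same strategy with the roles of $\chi$ and $\chi^{-1}$ interchanged: Lemma \ref{lemma10.1} admits an obvious symmetric form, and applying the general algebraic fact to $\varphi = \chi_p^{-1}$ and $N = M[I_{\chi_p^{-1}}]$ yields the desired injection. I do not expect any substantive obstacle in the proof — it amounts to formal bookkeeping around Lemma \ref{lemma10.1}. The only point demanding care is to keep the two $\F_p[G_c]$-actions on $\W/p\W$ (coming from $\chi_p$ and from $\chi_p^{-1}$) conceptually distinct when invoking the symmetric version of Lemma \ref{lemma10.1}.
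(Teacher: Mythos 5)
Your proof is correct and follows essentially the same route as the paper's: the paper obtains the isomorphism $M[I_{\chi_p}]\simeq M[I_{\chi_p}]\otimes_{\chi_p}\mathrm{Im}(\chi_p)$ by tensoring the short exact sequence $0\to I_{\chi_p}\to\F_p[G_c]\to\mathrm{Im}(\chi_p)\to0$ with $M[I_{\chi_p}]$ and then injects $\mathrm{Im}(\chi_p)$ into $\W/p\W$ using flatness, which is the same computation $N\otimes_A A/I\simeq N$ that you package as base change along the surjection $\chi_p$. Your observation that $\chi_p$ is surjective (so $\mathrm{Im}(\chi_p)=\W/p\W$) merely collapses the paper's final flatness step, and your handling of the second assertion --- producing the injection into the tensor product formed via $\chi_p^{-1}$, with the two $\F_p[G_c]$-structures on $\W/p\W$ kept distinct --- is exactly what the paper's own proof does.
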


\begin{proof} Since $M[I_{\chi_p}]$ is flat over $\F_p[G_c]$ by Lemma \ref{alg-2}, tensoring the short exact sequence 
\begin{equation} \label{seq-eq}
0\longrightarrow I_{\chi_p}\longrightarrow\F_p[G_c]\overset{\chi_p}{\longrightarrow}\text{Im}(\chi_p)\longrightarrow0 
\end{equation}
with $M[I_{\chi_p}]$ over $\F_p[G_c]$ yields a short exact sequence
\[ 0\longrightarrow M[I_{\chi_p}]\otimes_{\F_p[G_c]}I_{\chi_p}\longrightarrow M[I_{\chi_p}]\longrightarrow M[I_{\chi_p}]\otimes_{\chi_p}\text{Im}(\chi_p)\longrightarrow0. \]
But the image of $M[I_{\chi_p}]\otimes I_{\chi_p}$ in $M[I_{\chi_p}]$ is trivial, so there is an isomorphism
\begin{equation} \label{inj1-eq}
M[I_{\chi_p}]\overset{\simeq}{\longrightarrow}M[I_{\chi_p}]\otimes_{\chi_p}\text{Im}(\chi_p). 
\end{equation}
Finally, the flatness of $M[I_{\chi_p}]$ gives a further injection
\begin{equation} \label{inj2-eq}
M[I_{\chi_p}]\otimes_{\chi_p}\text{Im}(\chi_p)\;\longmono\;M[I_{\chi_p}]\otimes_{\chi_p}(\W/p\W),
\end{equation} 
whence the first claim by combining \eqref{inj1-eq} and \eqref{inj2-eq} with Lemma \ref{lemma10.1}. 

The second assertion follows in a completely analogous way upon replacing \eqref{seq-eq} with 
\[ 0\longrightarrow I_{\chi^{-1}_p}\longrightarrow\F_p[G_c]\xrightarrow{\chi^{-1}_p}\text{Im}(\chi_p)\longrightarrow0 \]
and tensoring with $M\bigl[I_{\chi^{-1}_p}\bigr]$ over $\F_p[G_c]$. \end{proof}

\begin{rem} \label{injection-rem}
For future reference, we explicitly observe that the injections of Lemma \ref{lemma-injection} send $x$ to $x\otimes1$.
\end{rem}

For any $\F_p$-vector space $V$ denote the $\F_p$-dual of $V$ by 
\begin{equation} \label{dual-V-eq} 
V^\vee:=\Hom_{\F_p}(V,\F_p). 
\end{equation} 
If the map $f:V_1\rightarrow V_2$ of $\F_p$-vector spaces is injective (respectively, surjective) then the dual map $f^\vee:V_2^\vee\rightarrow V_1^\vee$ is surjective (respectively, injective). The dual of an $\F_p[G_c]$-module inherits a natural structure of $\F_p[G_c]$-module: a Galois element $\sigma$ acts on a homomorphism $\varphi$ by $\sigma(\varphi):=\varphi\circ\sigma^{-1}$. Furthermore, if $f$ is a map of $\F_p[G_c]$-modules then its dual $f^\vee$ is again $G_c$-equivariant.

\begin{rem} \label{I-chi-conj-rem}
It follows directly from the definition of the dual $G_c$-action that if an $\F_p[G_c]$-module $M$ is of $I_{\chi_p}$-torsion (i.e., $M=M[I_{\chi_p}]$) then its dual $M^\vee$ is of $I_{\chi_p^{-1}}$-torsion (i.e., $M^\vee=M^\vee[I_{\chi_p^{-1}}]$).
\end{rem} 

We apply the above results to the $I_{\chi_p}$-torsion submodule of the Selmer group $\Sel_p(E/H_c)$. Let $\ell$ be an admissible prime and let $C$ denote the cokernel of the restriction 
\[ \res_\ell:\Sel_p(E/H_c)[I_{\chi_p}]\longrightarrow H^1_{\rm fin}(H_{c,\ell},E[p])[I_{\chi_p}]. \]
Of course, $C=C[I_{\chi_p}]$. There is a commutative diagram with exact rows
\begin{equation} \label{I-chi-diag-eq}
\xymatrix@C=38pt{\Sel_p(E/H_c)[I_{\chi_p}]\ar[r]^-{\res_\ell}\ar@{^{(}->}[d]&H^1_{\rm fin}(H_{c,\ell},E[p])[I_{\chi_p}]\ar@{->>}[r]\ar@{^{(}->}[d]& C\ar@{^{(}->}[d]\\
\Sel_p(E/H_c)\otimes_\chi\W\ar[r]^-{\res_\ell\otimes\mathrm{id}}&H^1_{\rm fin}(H_{c,\ell},E[p])\otimes_\chi\W\ar@{->>}[r]& C\otimes_\chi\W} 
\end{equation}
in which the right horizontal arrows are surjective and the vertical arrows are ${\rm id}\otimes 1$. Observe that the bottom row is a consequence of Lemma \ref{lemma10.1} and the flatness of $\W$, while the vertical maps are injections by Lemma \ref{lemma-injection}.

\begin{lemma} \label{aux-lemma-I}
If there exists $s\in\Sel_p(E/H_c)[I_{\chi_p}]$ such that $\res_\ell(s)\neq0$ then the map 
\[ \res_\ell^\vee\otimes{\rm id}:H^1_{\rm fin}(H_{c,\ell},E[p])[I_{\chi_p}]^\vee\otimes_\chi\W\longrightarrow\Sel_p(E/H_c)[I_{\chi_p}]^\vee\otimes_\chi\W \] 
is injective and non-zero. 
\end{lemma}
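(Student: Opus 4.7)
The plan is to first observe that $A := H^1_{\rm fin}(H_{c,\ell},E[p])[I_{\chi_p}]$ is one-dimensional as a vector space over the field $K := \F_p[G_c]/I_{\chi_p}$, then to exploit this to upgrade the non-vanishing of $\res_\ell(s)$ into the surjectivity of $\res_\ell\colon \Sel_p(E/H_c)[I_{\chi_p}] \twoheadrightarrow A$, and finally to dualize and tensor.

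First, by Lemma \ref{local-iso-lemma}, $H^1_{\rm fin}(H_{c,\ell},E[p]) \cong \F_p[G_c]$ as $\F_p[G_c]$-modules. Condition (1) of Assumption \ref{ass} yields $p \nmid h(c) = |G_c|$, so Maschke's theorem decomposes $\F_p[G_c]$ into a product of finite fields, one factor per Galois orbit of characters of $G_c$; the factor corresponding to $\chi_p$ is precisely $K$, and $A$ is canonically identified with this factor, hence is one-dimensional over $K$. Because $\res_\ell$ is $G_c$-equivariant, its restriction to $I_{\chi_p}$-torsion is a $K$-linear map, and the existence of $s$ with $\res_\ell(s) \neq 0$ forces surjectivity onto the one-dimensional $K$-space $A$.

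Surjectivity of $\res_\ell$ on $I_{\chi_p}$-torsion dualizes over $\F_p$ to injectivity of $\res_\ell^\vee\colon A^\vee \hookrightarrow \Sel_p(E/H_c)[I_{\chi_p}]^\vee$. By Lemma \ref{alg-2}, $\W$ is flat over $\Z[G_c]$, so tensoring with $\W$ via $\chi$ preserves this injectivity; this is the first conclusion of the lemma. For the non-vanishing, note that $A^\vee \cong K^\vee$ is non-zero, and by Remark \ref{I-chi-conj-rem} it is $I_{\chi_p^{-1}}$-torsion; the second assertion of Lemma \ref{lemma-injection} then provides a canonical injection $A^\vee \hookrightarrow A^\vee \otimes_\chi \W$, so the source of $\res_\ell^\vee \otimes {\rm id}$ is non-zero, and combined with the injectivity just established, the map itself must be non-zero.

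The only genuinely delicate point I anticipate is the bookkeeping between $\chi$ and $\chi^{-1}$ after dualizing, which is precisely what Remark \ref{I-chi-conj-rem} and the second part of Lemma \ref{lemma-injection} are designed to handle; once those are invoked correctly, the remainder is formal algebra resting on the semisimplicity of $\F_p[G_c]$ and the flatness of $\W$ over $\Z[G_c]$.
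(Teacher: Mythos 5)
Your proof is correct. The overall skeleton coincides with the paper's: establish surjectivity of $\res_\ell$ on $I_{\chi_p}$-torsion, dualize to get injectivity, tensor with the flat module $\W$, and then deduce non-vanishing from Remark \ref{I-chi-conj-rem} together with the second assertion of Lemma \ref{lemma-injection}. Where you genuinely diverge is in the key surjectivity step. The paper first passes to $\otimes_\chi\W$: it notes that $\res_\ell(s)\otimes 1\neq 0$ generates the one-dimensional $\W/p\W$-space $H^1_{\rm fin}(H_{c,\ell},E[p])\otimes_\chi\W$, concludes that $C\otimes_\chi\W=0$ for the cokernel $C$ of $\res_\ell$ on $I_{\chi_p}$-torsion, and only then descends to $C=0$ via the injection $C\hookrightarrow C\otimes_\chi\W$ of diagram \eqref{I-chi-diag-eq}. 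You instead stay entirely at the level of $\F_p[G_c]$-modules: since $p\nmid h(c)$, Maschke's theorem identifies $H^1_{\rm fin}(H_{c,\ell},E[p])[I_{\chi_p}]$ with the field factor $K=\F_p[G_c]/I_{\chi_p}$ of the semisimple algebra $\F_p[G_c]$, so it is one-dimensional over $K$ and any $K$-linear map hitting a non-zero element is onto. Your route is more direct and avoids the back-and-forth through $\W$, at the cost of invoking the semisimple decomposition explicitly; the paper's route keeps the bookkeeping inside the flatness machinery it has already set up (Lemmas \ref{alg-2}, \ref{lemma10.1}, \ref{lemma-injection}), which is why it reads as a diagram chase rather than a dimension count. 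Both arguments ultimately rest on the same hypothesis $p\nmid h(c)$, and both are complete.
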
 

\begin{proof} Keep the notation of \eqref{I-chi-diag-eq} and let $s\in\Sel_p(E/H_c)[I_{\chi_p}]$ be such that $\res_\ell(s)\neq0$. Then Lemma \ref{lemma-injection} ensures that $\res_\ell(s)\otimes 1\neq0$ in $H^1_{\rm fin}(H_{c,\ell},E[p])\otimes_\chi\W$ (cf. Remark \ref{injection-rem}). Since 
\[ H^1_{\rm fin}(H_{c,\ell},E[p])\otimes_\chi\W\simeq\W/p\W \]
by Lemma \ref{local-iso-lemma}, the non-zero element $\res_\ell(s)\otimes1$ generates $H^1_{\rm fin}(H_{c,\ell},E[p])\otimes_\chi\W$ as a $\W/p\W$-vector space. Then the $\W$-linear map
\[ \res_\ell\otimes\text{id}:\Sel_p(E/H_c)\otimes_\chi\W\longrightarrow H^1_{\rm fin}(H_{c,\ell},E[p])\otimes_\chi\W \] 
is surjective, and this shows that $C\otimes_\chi\W=0$. It follows from the injection in diagram \eqref{I-chi-diag-eq} that $C=0$ as well, thus the map 
\[ \res_\ell:\Sel_p(E/H_c)[I_{\chi_p}]\longrightarrow H^1_{\rm fin}(H_{c,\ell},E[p])[I_{\chi_p}] \] 
is surjective. By duality, we obtain an injection 
\[ \res_\ell^\vee:H^1_{\rm fin}(H_c,E[p])[I_{\chi_p}]^\vee\;\longmono\;\Sel_p(E/H_c)[I_{\chi_p}]^\vee. \]  
Since $\W$ is a flat $\Z[G_c]$-module, tensoring with $\W$ gives the required injection
\begin{equation} \label{res-tens-inj-eq}
\res_\ell^\vee\otimes{\rm id}:H^1_{\rm fin}(H_{c,\ell},E[p])[I_{\chi_p}]^\vee\otimes_\chi\W\;\longmono\;\Sel_p(E/H_c)[I_{\chi_p}]^\vee\otimes_\chi\W. 
\end{equation}
Our assumption on $s$ ensures that $H^1_{\rm fin}(H_{c,\ell},E[p])[I_{\chi_p}]\not=0$, hence $H^1_{\rm fin}(H_{c,\ell},E[p])[I_{\chi_p}]^\vee\not=0$ as well. On the other hand, $H^1_{\rm fin}(H_{c,\ell},E[p])[I_{\chi_p}]^\vee$ is of $I_{\chi_p^{-1}}$-torsion (cf. Remark \ref{I-chi-conj-rem}), so it injects into $H^1_{\rm fin}(H_{c,\ell},E[p])[I_{\chi_p}]^\vee\otimes_\chi\W$ by Lemma \ref{lemma-injection}. This shows that
\[ H^1_{\rm fin}(H_{c,\ell},E[p])[I_{\chi_p}]^\vee\otimes_\chi\W\not=0, \]
and we conclude that the injection \eqref{res-tens-inj-eq} is non-zero. \end{proof}

As a consequence of the above results, we can prove 

\begin{prop} \label{lemma-II}
If there exists $s\in\Sel_p(E/H_c)[I_{\chi_p}]$ such that $\res_\ell(s)\neq0$ then the map 
\[ \res_\ell^\vee\otimes{\rm id}:H^1_{\rm fin}(H_{c,\ell},E[p])^\vee\otimes_\chi\W\longrightarrow\Sel_p(E/H_c)^\vee\otimes_\chi\W \] 
is non-zero.
\end{prop}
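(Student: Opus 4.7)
The plan is to deduce Proposition \ref{lemma-II} from Lemma \ref{aux-lemma-I} by a straightforward functoriality/lifting argument, using that the only difference between the two statements is that the latter concerns the full Selmer and local cohomology groups rather than just their $I_{\chi_p}$-torsion submodules.

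First I would record that the two $G_c$-equivariant inclusions
\[ \Sel_p(E/H_c)[I_{\chi_p}]\;\longmono\;\Sel_p(E/H_c),\qquad H^1_{\rm fin}(H_{c,\ell},E[p])[I_{\chi_p}]\;\longmono\;H^1_{\rm fin}(H_{c,\ell},E[p]) \]
are morphisms of finite $\F_p[G_c]$-modules. Taking $\F_p$-duals (in the sense of \eqref{dual-V-eq}) turns them into surjections of $\F_p[G_c]$-modules, and since the functor $(-)\otimes_\chi\W$ is right exact, these remain surjections after tensoring with $\W$ over $\Z[G_c]$ via $\chi$ (no flatness is needed for this step). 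Naturality of the restriction map and of the dualization functor in the coefficient module $E[p]$ then yields a commutative square
\[ \xymatrix@C=30pt{H^1_{\rm fin}(H_{c,\ell},E[p])^\vee\otimes_\chi\W\ar[r]^-{\res_\ell^\vee\otimes\mathrm{id}}\ar@{->>}[d] & \Sel_p(E/H_c)^\vee\otimes_\chi\W\ar@{->>}[d]\\
H^1_{\rm fin}(H_{c,\ell},E[p])[I_{\chi_p}]^\vee\otimes_\chi\W\ar[r]^-{\res_\ell^\vee\otimes\mathrm{id}} & \Sel_p(E/H_c)[I_{\chi_p}]^\vee\otimes_\chi\W} \]
in which both vertical arrows are surjective.

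The conclusion then follows by a direct diagram chase. By Lemma \ref{aux-lemma-I}, the lower horizontal arrow is non-zero, so there exists $y$ in its domain with $(\res_\ell^\vee\otimes\mathrm{id})(y)\neq 0$. Using the surjectivity of the left vertical arrow, I would lift $y$ to an element $\tilde y$ of the upper-left corner. Commutativity of the square forces $(\res_\ell^\vee\otimes\mathrm{id})(\tilde y)$ to project in the lower-right corner to the non-zero image of $y$, and in particular $(\res_\ell^\vee\otimes\mathrm{id})(\tilde y)$ is itself non-zero in the upper-right corner, which is exactly the desired conclusion.

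There is no real obstacle here: all the genuinely delicate points (controlling the $I_{\chi_p}$-torsion, injecting it into its $\W$-tensor via Lemma \ref{lemma-injection}, and exhibiting a non-zero class through Lemma \ref{local-iso-lemma} and Lemma \ref{lemma10.1}) have already been absorbed into Lemma \ref{aux-lemma-I}. The present proposition is the cost-free upgrade to full Selmer and local cohomology groups and amounts to a diagram chase combined with the right exactness of the tensor product.
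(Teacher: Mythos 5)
Your argument is correct and is essentially identical to the paper's proof: both form the same commutative square with surjective vertical arrows (duals of the $I_{\chi_p}$-torsion inclusions, tensored with $\W$) and conclude by the same lifting/diagram chase from the non-vanishing of the bottom arrow supplied by Lemma \ref{aux-lemma-I}. Your extra remarks on right exactness of $(-)\otimes_\chi\W$ and on why the vertical maps are surjective simply make explicit what the paper leaves implicit.
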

\begin{proof} In the commutative square
\[ \xymatrix@C=30pt{H^1_{\rm fin}(H_{c,\ell},E[p])^\vee\otimes_\chi\W\ar[r]^-{\res_\ell^\vee\otimes\text{id}}\ar@{->>}[d]&\Sel_p(E/H_c)^\vee\otimes_\chi\W\ar@{->>}[d]\\ 
H^1_{\rm fin}(H_{c,\ell},E[p])[I_{\chi_p}]^\vee\otimes_\chi\W\ar@{^{(}->}[r]&\Sel_p(E/H_c)[I_{\chi_p}]^\vee\otimes_\chi\W} \]
the vertical maps are surjective and the bottom horizontal arrow is (injective and) non-zero by Lemma \ref{aux-lemma-I}. Hence the upper horizontal arrow must be non-zero. \end{proof} 

\section{Bounding the Selmer group} \label{selmer-sec}

\subsection{An equality of divisors} \label{divisors-subsec}

Let $\ell$ be an admissible prime and, as usual, set $H_{c,\ell}:=H_c\otimes\Q_\ell=\oplus_{\lambda\mid\ell}H_{c,\lambda}$,
where the sum is over the primes of $H_c$ above $\ell$. An element of the completion $H_{c,\lambda}$ can be viewed as an equivalence class of Cauchy sequences in $H_c$ with respect to the $\lambda$-adic metric, two such sequences being identified if their difference has limit $0$. On the other hand, $H_{c,\lambda}=K_\ell$ for all $\lambda|\ell$. With this in mind, in this $\S$ we adopt the following notation: if $\lambda|\ell$ and $x\in H_c$ then we choose a sequence $\boldsymbol x_\lambda:=(x_{\lambda,n})_{n\in\N}$ of elements of $K$ converging to $x$ in the $\lambda$-adic topology, and denote $[\boldsymbol x_\lambda]$ its equivalence class in $K_\ell$. In other words, $x=[\boldsymbol x_\lambda]$ in $H_{c,\lambda}=K_\ell$.

Thus we obtain a composition of maps
\begin{equation} \label{Hc1}
\begin{array}{ccccc}H_c&\longmono&\bigoplus_{\lambda|\ell}H_{c,\lambda}&\overset{\simeq}{\longrightarrow}&\bigoplus_{\lambda|\ell}K_\ell\\[2mm]
x&\longmapsto&(x,\dots,x)&\longmapsto&\bigl([\boldsymbol x_\lambda]\bigr)_{\lambda|\ell}
\end{array}
\end{equation}
which gives a concrete description of the ``diagonal embedding'' of $H_c$ in $\oplus_{\lambda|\ell}K_\ell$.

Now recall the prime $\lambda_0$ above $\ell$ fixed in \S \ref{admissible-subsec}. Then, with notation as in \eqref{bijection-inverse-eq}, there is an identification of $K_\ell$-vector spaces between $\oplus_{\lambda\mid\ell}K_\ell$ and the group algebra $K_\ell[G_c]$ via the map
\begin{equation} \label{Hc2}
(y_\lambda)_{\lambda|\ell}\longmapsto\sum_{\lambda|\ell}y_\lambda\sigma_\lambda=\sum_{\sigma\in G_c}y_{\sigma(\lambda_0)}\sigma.
\end{equation}
Note that the map in \eqref{Hc2} depends on the choice of the prime ideal $\lambda_0$ above $\ell$, while the one in \eqref{Hc1} does not. Let now
\[ \theta_\ell:H_c\longrightarrow K_\ell[G_c] \]
be the composition of the maps in \eqref{Hc1} and \eqref{Hc2}; then $\theta_\ell$ depends on the choice of $\lambda_0$. Explicitly, one has
\begin{equation} \label{theta-eq}
\theta_\ell(x)=\sum_{\sigma\in G_c}[\boldsymbol x_{\sigma(\lambda_0)}]\sigma
\end{equation}
for all $x\in H_c$. By definition, for all $\sigma\in G_c$ the sequence $\boldsymbol x_{\sigma(\lambda_0)}$ converges to $x$ in the $\sigma(\lambda_0)$-adic topology. On the other hand, since $x_{\sigma(\lambda_0),n}\in K$ for all $n\in\N$, it also follows from this that $\boldsymbol x_{\sigma(\lambda_0)}$ converges to $\sigma^{-1}(x)$ in the $\lambda_0$-adic topology.

Let $\beta_\ell:H_c[G_c]\rightarrow K_\ell[G_c]$ denote the map defined by
\[ \beta_\ell\Big(\sum_{\sigma\in G_c}z_\sigma\sigma\Big):=\sum_{\sigma\in G_c}[\boldsymbol z_\sigma]\sigma \]
where $[\boldsymbol z_\sigma]$ is the equivalence class of a sequence $\boldsymbol z_\sigma:=(z_{\sigma,n})_{n\in\N}$ of elements of $K$ converging to $z_\sigma$ in the $\lambda_0$-adic topology (so $\beta_\ell$ depends on the choice of  $\lambda_0$). It is straightforward to check that
\begin{equation} \label{Hc4}
\theta_\ell(x)=\beta_\ell\Big(\sum_{\sigma\in G_c}\sigma^{-1}(x)\sigma\Big)
\end{equation}
for all $x\in H_c$. The above maps $\theta_\ell$ and $\beta_\ell$ yield in the obvious way maps
\[ \theta_\ell:J^{(\ell)}(H_c)\;\longmono\;J^{(\ell)}(K_\ell)\otimes\Z[G_c],\qquad \beta_\ell:J^{(\ell)}(H_c)\otimes\Z[G_c]\longrightarrow J^{(\ell)}(K_\ell)\otimes\Z[G_c] \]
which we denote by the same symbols.

With notation as in \S \ref{reciprocity-subsec}, one then has the following

\begin{lemma} \label{equality-P-alpha}
$\theta_\ell(\xi_c)=\beta_\ell(\alpha_c)$.
\end{lemma}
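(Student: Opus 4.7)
The plan is to reduce the statement to the identity \eqref{Hc4}, which already encodes exactly this equality for elements of $H_c$. The key observation is that the definition of $\alpha_c$ can be rewritten by changing the summation variable, and the resulting expression is precisely what \eqref{Hc4} relates to $\theta_\ell$.

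Concretely, I would first perform the substitution $\tau=\sigma^{-1}$ in the defining sum
\[ \alpha_c=\sum_{\sigma\in G_c}\xi_c^\sigma\otimes\sigma^{-1} \]
to rewrite it as
\[ \alpha_c=\sum_{\tau\in G_c}\tau^{-1}(\xi_c)\otimes\tau. \]
Then the right-hand side of the claim becomes
\[ \beta_\ell(\alpha_c)=\beta_\ell\Big(\sum_{\tau\in G_c}\tau^{-1}(\xi_c)\,\tau\Big), \]
which is formally identical to the right-hand side of \eqref{Hc4} with $x$ replaced by $\xi_c$.

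Second, I would note that equation \eqref{Hc4} has been established for $x\in H_c$, but both $\theta_\ell$ and $\beta_\ell$ extend to the jacobian componentwise: the definitions of $\theta_\ell$ and $\beta_\ell$ on $J^{(\ell)}(H_c)$ and $J^{(\ell)}(H_c)\otimes\Z[G_c]$ are obtained by applying the completion maps $H_c\hookrightarrow H_{c,\lambda}=K_\ell$ to coordinates, and these completions commute with the Galois action exactly as in the case of scalars. Hence the formula \eqref{Hc4} holds verbatim for $\xi_c\in J^{(\ell)}(H_c)$, giving
\[ \theta_\ell(\xi_c)=\beta_\ell\Big(\sum_{\sigma\in G_c}\sigma^{-1}(\xi_c)\,\sigma\Big)=\beta_\ell(\alpha_c). \]

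The only point requiring care, which I expect to be the main (minor) technical obstacle, is the bookkeeping of the Galois action and the choice of Cauchy sequences underlying $\beta_\ell$: one has to confirm that for each $\sigma\in G_c$ a sequence $\boldsymbol{\sigma^{-1}(\xi_c)}$ of $K$-rational representatives converging $\lambda_0$-adically to $\sigma^{-1}(\xi_c)$ can be identified, via the fact that $\boldsymbol x_{\sigma(\lambda_0)}$ converges to $\sigma^{-1}(x)$ in the $\lambda_0$-adic topology, with the $\sigma$-component of $\theta_\ell(\xi_c)$ as given by \eqref{theta-eq}. Once this identification is made transparent the equality is essentially a tautology, reflecting nothing more than the compatibility between the ``diagonal embedding'' $H_c\hookrightarrow\oplus_{\lambda|\ell}K_\ell$ used to define $\theta_\ell$ and the re-indexing $\sigma\leftrightarrow\sigma(\lambda_0)$ built into \eqref{bijection-eq}.
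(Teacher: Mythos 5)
Your proposal is correct and follows exactly the paper's argument: the paper's proof is literally ``Immediate from equality \eqref{Hc4} by definition of $\xi_c$,'' and your reindexing $\tau=\sigma^{-1}$ together with the extension of \eqref{Hc4} to points of the jacobian is precisely the unwinding of that one-line proof. The technical point you flag about Cauchy-sequence bookkeeping is exactly what the surrounding discussion in \S\ref{divisors-subsec} (the remarks after \eqref{theta-eq}) already settles, so nothing further is needed.
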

\begin{proof} Immediate from equality \eqref{Hc4} by definition of $\xi_c$. \end{proof}

\subsection{The Galois action}

Fix $\sigma_0\in G_c$, let $x\in H_c$ and set $y:=\sigma_0(x)\in H_c$. Then equation \eqref{theta-eq} shows that
\begin{equation} \label{Hc3}
\theta_\ell(y)=\sum_{\sigma\in G_c}[\boldsymbol y_{\sigma(\lambda_0)}]\sigma,
\end{equation}
where $\boldsymbol y_{\sigma(\lambda_0)}$ is a sequence of elements in $K$ which converges to $y$ in the $\sigma(\lambda_0)$-adic topology. Thus the sequence $\boldsymbol y_{\sigma(\lambda_0)}$ also converges to $x$ in the $\sigma_0^{-1}\sigma(\lambda_0)$-adic topology. On the other hand, there is a canonical action of $\sigma_0$ on $K_\ell[G_c]$ induced by multiplication on group-like elements and given by
\[ \sigma_0\Big(\sum_{\sigma\in G_c}z_\sigma\sigma\Big):=\sum_{\sigma\in G_c}z_\sigma(\sigma_0\sigma). \]
Then there is an equality
\[ \sigma_0\bigl(\theta_\ell(x)\bigr)=\sum_{\sigma\in G_c}[\boldsymbol x_{\sigma(\lambda_0)}](\sigma_0\sigma)=\sum_{\sigma\in G_c}\bigl[\boldsymbol x_{\sigma_0^{-1}\sigma(\lambda_0)}\bigr]\sigma. \]
In the above equation, the sequence $\boldsymbol x_{\sigma_0^{-1}\sigma(\lambda_0)}$ converges to $x$ in the $\sigma_0^{-1}\sigma(\lambda_0)$-adic topology. This fact and the observation immediately after \eqref{Hc3} show that $\theta_\ell\bigl(\sigma_0(x)\bigr)=\sigma_0\bigl(\theta_\ell(x)\bigr)$. Since $\sigma_0\in G_c$ and $x\in H_c$ were taken arbitrarily, it follows that
\begin{equation} \label{theta-sigma}
\theta_\ell\circ\sigma=\sigma\circ\theta_\ell
\end{equation}
for all $\sigma\in G_c$. In other words, the map $\theta_\ell$ is $G_c$-equivariant.
\begin{rem}
Equality \eqref{theta-sigma} holds both as a relation between maps from $H_c$ to $K_\ell[G_c]$ and as a relation between maps from $J^{(\ell)}(H_c)$ to $J^{(\ell)}(K_\ell)\otimes\Z[G_c]$.
\end{rem}

\subsection{Construction of an Euler system}

Let
\begin{equation} \label{d-ell-eq}
d_\ell:H^1(H_c,E[p])\longrightarrow H_{\rm sing}^1(H_{c,\ell},E[p]) 
\end{equation}
denote the composition of $\text{res}_\ell$ with the projection onto the singular quotient. We introduce $\chi$-twisted versions of the maps $\theta_\ell$, $\beta_\ell$ and $d_\ell$ defined above. Namely, set
\begin{equation} \label{twisted-maps-eq}
\begin{split}
\theta_\ell^\chi&:=(\text{id}\otimes\chi)\circ\theta_\ell:J^{(\ell)}(H_c)\longrightarrow J^{(\ell)}(K_\ell)\otimes_\Z\W,\\
\beta_\ell^\chi&:=(\text{id}\otimes\chi)\circ\beta_\ell:J^{(\ell)}(H_c)\otimes_\Z\Z[G_c]\longrightarrow J^{(\ell)}(K_\ell)\otimes_\Z\W,\\
d_\ell^\chi&:=d_\ell\otimes1:H^1(H_c,E[p])\longrightarrow H_{\rm sing}^1(H_{c,\ell},E[p])\otimes_\chi\W.
\end{split}
\end{equation}
As explained in Lemma \ref{local-iso-lemma}, the choice of the prime $\lambda_0$ of $H_c$ above $\ell$ induces isomorphisms
\[ \gamma_\ell:H^1_\star(H_{c,\ell},E[p])\overset{\simeq}{\longrightarrow}H^1_\star(K_\ell,E[p])\otimes_\Z\Z[G_c]=\Z/p\Z[G_c] \]
for $\star\in\{\text{fin, sing}\}$ (the maps $\gamma_\ell$ are defined as in \S \ref{divisors-subsec}, and the equality on the right is a consequence of the identifications in \eqref{fin-sing-iso-eq}). Since $\lambda_0$ has been fixed once and for all in \S \ref{admissible-subsec}, we will sometimes regard the maps $\gamma_\ell$ as identifications (or, better, canonical isomorphisms). Then it is easily checked that the square
\begin{equation} \label{square-aux-eq}
\xymatrix{H^1(H_c,E[p])\ar[r]^-{\gamma_\ell\circ d_\ell}\ar[d]^-{d_\ell^\chi} & H^1_{\rm sing}(K_\ell,E[p])\otimes_\Z\Z[G_c]\ar[d]^-{{\rm id}\otimes\chi}\\
             H^1_{\rm sing}(H_{c,\ell},E[p])\otimes_\chi\W\ar[r]^-{\simeq} & H^1_{\rm sing}(K_\ell,E[p])\otimes_\Z\W}
\end{equation}
is commutative (here the bottom row is given by $\gamma_\ell$ followed by the canonical identification between tensor products).

In light of diagram \eqref{square-aux-eq}, there is a further commutative diagram
\begin{equation} \label{diagram2}
\xymatrix{J^{(\ell)}(H_c)\ar[r]^-{\theta_\ell^\chi}\ar[d]^-{\bar\kappa} & J^{(\ell)}(K_\ell)\otimes_\Z\W\ar[r]^-{\bar\partial_\ell\otimes{\rm id}} & \Phi_\ell/\fr m_{f_\ell}\otimes_\Z\W\ar[d]^-{\nu_\ell}_-{\simeq} \\
H^1(H_c,E[p])\ar[r]^-{d_\ell^\chi} & H_{\rm sing}^1(H_{c,\ell},E[p])\otimes_\chi\W\ar[r]^-{\simeq} & H^1_{\rm sing}(K_\ell,E[p])\otimes_\Z\W }
\end{equation}
where $\bar\kappa$ is defined as in \eqref{kappa} and $\nu_\ell$ is the isomorphism of \cite[Corollary 5.18]{BD} tensored with the identity map of $\W$ (see \emph{loc. cit.} for details). Now define a cohomology class
\[ \kappa(\ell):=\bar\kappa(\xi_c)\in H^1(H_c,E[p]). \]
\begin{prop} \label{nonvanishing-prop}
If $L_K(f,\chi,1)\neq0$ then
\[ d_\ell^\chi\bigl(\kappa(\ell)\bigr)\neq0 \]
in $H^1_{\rm sing}(H_{c,\ell},E[p])\otimes_\chi\W$.
\end{prop}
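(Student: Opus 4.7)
The plan is to chase diagram \eqref{diagram2} and then invoke the explicit reciprocity law (Theorem \ref{rec-law}). Starting from $\xi_c$ in the upper-left corner, travelling down and then right produces $d_\ell^\chi(\bar\kappa(\xi_c)) = d_\ell^\chi(\kappa(\ell))$ in the lower middle. Travelling right and then down produces, via the isomorphism $\nu_\ell$, the image of $(\bar\partial_\ell\otimes{\rm id})(\theta_\ell^\chi(\xi_c))$ sitting in the lower right. Since $\nu_\ell$ and the bottom-right isomorphism are both bijections, it is enough to prove that
\[
(\bar\partial_\ell\otimes{\rm id})\bigl(\theta_\ell^\chi(\xi_c)\bigr)\neq 0\qquad\text{in }\Phi_\ell/\fr m_{f_\ell}\otimes_\Z\W.
\]

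To this end, I would first apply Lemma \ref{equality-P-alpha}, which gives $\theta_\ell(\xi_c)=\beta_\ell(\alpha_c)$ in $J^{(\ell)}(K_\ell)\otimes_\Z\Z[G_c]$, hence $\theta_\ell^\chi(\xi_c)=\beta_\ell^\chi(\alpha_c)$ after applying ${\rm id}\otimes\chi$. Unwinding definitions, writing $\alpha_c=\sum_{\sigma\in G_c}\xi_c^\sigma\otimes\sigma^{-1}$ and using the $\W$-linearity of $\bar\partial_\ell\otimes{\rm id}$, one obtains
\[
(\bar\partial_\ell\otimes{\rm id})\bigl(\beta_\ell^\chi(\alpha_c)\bigr)=\sum_{\sigma\in G_c}(\bar\partial_\ell\circ i_{\lambda_0})(\xi_c^\sigma)\otimes\chi(\sigma^{-1})=\bigl((\bar\partial_\ell\circ i_{\lambda_0})\otimes\chi\bigr)(\alpha_c),
\]
which is exactly the left-hand side of Theorem \ref{rec-law}.

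By Theorem \ref{rec-law}, this element equals $[\cl L(f,\chi)]$ up to a unit in $\Z[\chi]/p\Z[\chi]$. Next I would use the choice of $\fr p$ made in \eqref{alg-nonzero-eq}: by condition 3 of Assumption \ref{ass} (which applies because $L_K(f,\chi,1)\neq 0$), the image of $\cl L(f,\chi)$ in $\Z[\chi]/\fr p=\W/p\W$ is non-zero. Composing the identification
\[
\Phi_\ell/\fr m_{f_\ell}\otimes_\Z\W\;\simeq\;(\Z/p\Z)\otimes_\Z\W\;=\;\W/p\W
\]
coming from \eqref{Phi-ell-2} with the natural projection $\Z[\chi]/p\Z[\chi]\twoheadrightarrow\Z[\chi]/\fr p=\W/p\W$ (well-defined since $p$ is unramified in $\Z[\chi]$), we see that the image of $[\cl L(f,\chi)]$ in $\Phi_\ell/\fr m_{f_\ell}\otimes_\Z\W$ is non-zero. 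Hence $(\bar\partial_\ell\otimes{\rm id})(\theta_\ell^\chi(\xi_c))\neq 0$, and the commutativity of \eqref{diagram2} concludes the proof.

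No step looks truly difficult: the heart is the reciprocity law, which is already established, together with the consistency of the two uses of $\lambda_0$ (one to build $\theta_\ell$, the other to identify the singular local cohomology with $\Z/p\Z[G_c]$). The only care-demanding point is verifying that the tensor-product identification $\Phi_\ell/\fr m_{f_\ell}\otimes_\Z\W\simeq\W/p\W$ matches the projection $\Z[\chi]/p\Z[\chi]\twoheadrightarrow\Z[\chi]/\fr p$ used to exploit the non-vanishing of $\cl L(f,\chi)$ modulo $\fr p$; this however is immediate from the definition of $\W$ as the completion of $\Z[\chi]$ at $\fr p$ together with the flatness of $\W$ supplied by Lemma \ref{alg-2}.
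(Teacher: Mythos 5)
Your proposal is correct and follows essentially the same route as the paper's own proof: apply Lemma \ref{equality-P-alpha} to rewrite $\theta_\ell^\chi(\xi_c)$ as $\beta_\ell^\chi(\alpha_c)$, identify $(\bar\partial_\ell\otimes{\rm id})(\beta_\ell^\chi(\alpha_c))$ with the left-hand side of Theorem \ref{rec-law}, use \eqref{alg-nonzero-eq} to see that $\cl L(f,\chi)$ is non-zero in $\W/p\W$, and conclude by the commutativity of \eqref{diagram2}. The only difference is that you spell out the unwinding of $\beta_\ell^\chi(\alpha_c)$ and the compatibility between $\Z[\chi]/p\Z[\chi]$ and $\Z[\chi]/\fr p$, details the paper leaves implicit.
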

\begin{proof} By Lemma \ref{equality-P-alpha} we know that $\theta_\ell(\xi_c)=\beta_\ell(\alpha_c)$, hence
\[ \theta_\ell^\chi(\xi_c)=\beta_\ell^\chi(\alpha_c). \]
Keeping in mind that $L_K(f,\chi,1)\not=0$, assumption \eqref{alg-nonzero-eq} ensures that the image of $\cl L(f,\chi)$ in $\W/p\W$ is non-zero, and then $\bigl(\bar\partial_\ell\otimes{\rm id}\bigr)\bigl(\beta_\ell^\chi(\alpha_c)\bigr)\not=0$ by Theorem \ref{rec-law}. Now the proposition follows by the commutativity of \eqref{diagram2} and the definition of $\kappa(\ell)$. \end{proof}

The collection of cohomology classes $\{k(\ell)\}$ indexed by the set of admissible primes is an \emph{Euler system} relative to $E_{/K}$ and will be used in the sequel to bound the $p$-Selmer groups.

\subsection{Local and global Tate pairings}

For any prime $q$ of $\Z$ denote by
\[ \langle\,,\rangle_q:H^1(H_{c,q},E[p])\times H^1(H_{c,q},E[p])\longrightarrow\Z/p\Z \]
the local Tate pairing at $q$ and by
\[ \langle\,,\rangle:H^1(H_c,E[p])\times H^1(H_c,E[p])\longrightarrow\Z/p\Z \]
the global Tate pairing. The reciprocity law of class field theory shows that
\begin{equation} \label{GRL}
\langle k,s\rangle=\sum_q\langle \res_q(k),\res_q(s)\rangle_q=0
\end{equation}
for all $k,s\in H^1(H_c,E[p])$. From here on let $\ell$ be an admissible prime. The local Tate pairing $\langle\,,\rangle_\ell$ gives rise to a non-degenerate pairing of finite dimensional $\F_p$-vector spaces
\[ \langle\,,\rangle_\ell:H^1_{\rm fin}(H_{c,\ell},E[p])\times H^1_{\rm sing}(H_{c,\ell},E[p])\longrightarrow \Z/p\Z; \]
then, with notation as in \eqref{dual-V-eq}, if $\{\star,\bullet\}=\{\text{fin, sing}\}$ we get isomorphisms of finite-dimensional $\F_p$-vector spaces
\begin{equation} \label{isom-tate-eq}
H^1_\star(H_{c,\ell},E[p])\overset{\simeq}{\longrightarrow}H^1_\bullet(H_{c,\ell},E[p])^\vee.
\end{equation}
It is immediate to see that
\[ \langle\sigma(k),\sigma(s)\rangle_\ell=\langle k,s\rangle_\ell \]
for all $\sigma\in G_c$, $k\in H^1_{\rm fin}(H_{c,\ell},E[p])$ and $s\in H^1_{\rm sing}(H_{c,\ell},E[p])$, hence the map \eqref{isom-tate-eq} is $G_c$-equivariant (as usual, $\sigma$ acts on a homomorphism $\varphi$ by $\sigma(\varphi):=\varphi\circ\sigma^{-1}$).

Now recall that, by Lemma \ref{local-iso-lemma}, both the finite and the singular cohomology at $\ell$ are isomorphic to $\Z/p\Z[G_c]$ as $\Z[G_c]$-modules. Then from \eqref{isom-tate-eq} we get isomorphisms of one-dimensional $\W/p\W$-vector spaces
\begin{equation} \label{iso-tate}
H^1_\star(H_{c,\ell},E[p])\otimes_\chi\W\overset{\simeq}{\longrightarrow}H^1_\bullet(H_{c,\ell},E[p])^\vee\otimes_\chi\W.
\end{equation}
By local Tate duality, the restriction
\[ \res_\ell:\Sel_p(E/H_c)\longrightarrow H^1_{\rm fin}(H_{c,\ell},E[p]) \]
induces a $\W$-linear map 
\[ \eta_\ell:H^1_{\rm sing}(H_{c,\ell},E[p])\otimes_\chi\W\longrightarrow\Sel_p(E/H_c)^\vee\otimes_\chi\W. \]

\begin{lemma} \label{aux-lemma-IV} 
If there exists $s\in\Sel_p(E/H_c)[I_{\chi_p}]$ such that $\res_\ell(s)\neq 0$ then $\eta_\ell$ is non-zero.
\end{lemma}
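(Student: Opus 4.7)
The plan is to express $\eta_\ell$ as a composition in which the first factor is the local Tate duality isomorphism of \eqref{iso-tate} and the second factor is the map $\res_\ell^\vee\otimes{\rm id}$ studied in Proposition \ref{lemma-II}. Once this factorisation is set up, the desired non-vanishing of $\eta_\ell$ will follow immediately from Proposition \ref{lemma-II} together with the fact that the left factor is an isomorphism.

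More precisely, I would first unpack the definition of $\eta_\ell$. By construction, given a singular class $c\in H^1_{\rm sing}(H_{c,\ell},E[p])$, the functional $\eta_\ell(c\otimes1)\in\Sel_p(E/H_c)^\vee\otimes_\chi\W$ acts on a Selmer class $s$ by the rule $s\mapsto\langle\res_\ell(s),c\rangle_\ell$. Translating the pairing into the isomorphism \eqref{iso-tate} (with $\star=\mathrm{sing}$ and $\bullet=\mathrm{fin}$), this means precisely that $\eta_\ell$ is the composition of the isomorphism $H^1_{\rm sing}(H_{c,\ell},E[p])\otimes_\chi\W\simeq H^1_{\rm fin}(H_{c,\ell},E[p])^\vee\otimes_\chi\W$ provided by \eqref{iso-tate} followed by the map $\res_\ell^\vee\otimes{\rm id}$ of Proposition \ref{lemma-II}. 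Concretely, one has a commutative triangle
\[
\xymatrix@C=30pt{H^1_{\rm sing}(H_{c,\ell},E[p])\otimes_\chi\W\ar[rr]^-{\eta_\ell}\ar[dr]_-{\simeq}&&\Sel_p(E/H_c)^\vee\otimes_\chi\W\\
& H^1_{\rm fin}(H_{c,\ell},E[p])^\vee\otimes_\chi\W\ar[ur]_-{\res_\ell^\vee\otimes{\rm id}}&}
\]
in which the left diagonal arrow is an isomorphism of one-dimensional $\W/p\W$-vector spaces.

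At this point, the hypothesis that there exists $s\in\Sel_p(E/H_c)[I_{\chi_p}]$ with $\res_\ell(s)\neq0$ is exactly the hypothesis of Proposition \ref{lemma-II}, which guarantees that the right diagonal arrow is non-zero. Composing with an isomorphism preserves non-vanishing, so $\eta_\ell\neq0$, as required.

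The only mildly delicate point, which I would want to double-check but which I do not expect to be a real obstacle, is the $G_c$-equivariance needed to make the triangle well-defined after tensoring with $\W$ over $\Z[G_c]$. This comes for free from the Galois equivariance of the local Tate pairing (already recorded right before \eqref{iso-tate}) and from the fact that $\res_\ell$ is $G_c$-equivariant, so both slanted arrows in the triangle descend naturally to the $\chi$-twisted tensor products and the factorisation survives the base change.
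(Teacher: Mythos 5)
Your argument is correct and is exactly the one the paper uses: its proof of this lemma reads ``Immediate from \eqref{iso-tate} and Proposition \ref{lemma-II},'' which is precisely your factorisation of $\eta_\ell$ as the Tate-duality isomorphism followed by $\res_\ell^\vee\otimes{\rm id}$. Your write-up simply makes the implicit commutative triangle and the equivariance check explicit.
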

\begin{proof} Immediate from \eqref{iso-tate} and Proposition \ref{lemma-II}. \end{proof}

For the next two results, recall the maps $d_\ell$ and $d_\ell^\chi$ defined in \eqref{d-ell-eq} and \eqref{twisted-maps-eq}, and recall also that $p$ satisfies Assumption \ref{ass}. In the first statement we use the notation of \S \ref{classical-selmer-subsec}.

\begin{lemma} \label{bad-kummer-lemma}
If $q$ is a prime dividing $N$ then ${\rm Im}(\delta_q)=0$.
\end{lemma}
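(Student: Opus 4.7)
The plan is to exploit the structural decomposition of the local Mordell--Weil group from Lutz's theorem (recalled in the Remark after Assumption \ref{ass}) together with condition $5$ of Assumption \ref{ass}, and the fact that the prime $q$ dividing $N$ is automatically distinct from $p$.

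First I would reduce to a prime-by-prime analysis. Since $\delta_q=\prod_{\fr q\mid q}\delta_\fr q$ and the Kummer sequence gives a natural identification $\Image(\delta_\fr q)\simeq E(H_{c,\fr q})/pE(H_{c,\fr q})$, it suffices to prove that $E(H_{c,\fr q})/pE(H_{c,\fr q})=0$ for every prime $\fr q$ of $H_c$ lying above $q$.

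Next I would invoke Lutz's theorem, quoted in the excerpt, to obtain an isomorphism
\[ E(H_{c,\fr q})\simeq\Z_q^{[H_{c,\fr q}:\Q_q]}\times T_\fr q, \]
where $T_\fr q$ is the (finite) torsion subgroup of $E(H_{c,\fr q})$. By condition $1$ of Assumption \ref{ass} one has $p\nmid N$ and in particular $p\neq q$; therefore multiplication by $p$ is a bijection on the pro-$q$ factor $\Z_q^{[H_{c,\fr q}:\Q_q]}$, and
\[ E(H_{c,\fr q})/pE(H_{c,\fr q})\simeq T_\fr q/pT_\fr q. \]
Condition $5$ of Assumption \ref{ass} states precisely that $p\nmid\#T_\fr q$, so $T_\fr q/pT_\fr q=0$. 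Hence $\Image(\delta_\fr q)=0$, and taking the product over all $\fr q\mid q$ gives $\Image(\delta_q)=0$.

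There is no real obstacle in this argument: it is a purely formal consequence of Assumption \ref{ass}, which has been set up exactly so as to kill the local Kummer image at primes dividing $N$. The only small subtlety is making sure that $p\neq q$ (so that $p$ acts invertibly on the free $\Z_q$-part), but this is immediate from the condition $p\nmid cNDh(c)$.
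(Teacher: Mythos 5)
Your proof is correct and follows the same approach the paper has in mind: the paper's own proof is terser, simply noting that $\delta_q$ factors through $E(H_{c,q})/pE(H_{c,q})$ and that condition 5 of Assumption \ref{ass} gives the vanishing, but it implicitly relies on exactly the Lutz decomposition and the observation $p\neq q$ that you spell out. You have merely filled in the details that the authors left to the reader.
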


\begin{proof} Since the local Kummer map $\delta_q$ factors through $E(H_{c,q})/pE(H_{c,q})$, the claim follows from condition $5$ in Assumption \ref{ass}. \end{proof}

The following is essentially a reformulation of \cite[Lemma 6.4]{BD}.

\begin{prop} \label{kernel-eta-prop} 
The element $d_\ell^\chi\bigl(\kappa(\ell)\bigr)$ belongs to the kernel of $\eta_\ell$.
\end{prop}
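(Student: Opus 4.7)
The plan is to invoke the global reciprocity law \eqref{GRL} for the Tate pairing and show that every local contribution outside $\ell$ automatically vanishes, so that the pairing at $\ell$ must be trivial; then the proposition follows by unwinding the definition of $\eta_\ell$.

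Fix any $s\in\Sel_p(E/H_c)$ and consider the cohomology class $\kappa(\ell)\in H^1(H_c,E[p])$. Applying \eqref{GRL} to the pair $\bigl(\kappa(\ell),s\bigr)$ gives
\[
\sum_q \langle\res_q(\kappa(\ell)),\res_q(s)\rangle_q=0.
\]
First I would treat the primes $q\mid N$: by Lemma \ref{bad-kummer-lemma} we have ${\rm Im}(\delta_q)=0$, and since $\res_q(s)\in{\rm Im}(\delta_q)$ by the Selmer condition, the local term is zero. Next I would handle the primes $q\nmid N\ell$ (including the case $q=p$, which is admissible since $p\nmid N\ell$): by Proposition \ref{prop-kummer}, whose hypothesis $e<p-1$ is satisfied because $p$ is unramified in $H_c$ (condition $1$ of Assumption \ref{ass} gives $p\nmid cD$, so $p$ is unramified in $K$ and in $H_c/K$) and $p\geq 5$, one gets $\res_q(\kappa(\ell))\in{\rm Im}(\delta_q)$. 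Since $\res_q(s)$ also lies in ${\rm Im}(\delta_q)$ and ${\rm Im}(\delta_q)$ is a maximal isotropic subspace for the local Tate pairing at $q$ (a classical fact for abelian varieties, encoded at $q\neq p$ in Proposition \ref{prop-fin-sing} and the isotropy of $H^1_{\rm fin}$, and at $q=p$ in the flat-cohomological description of $\S\ref{flat-subsec}$), the local term vanishes.

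Consequently the only potentially non-zero term is at $\ell$, forcing $\langle\res_\ell(\kappa(\ell)),\res_\ell(s)\rangle_\ell=0$. Since $\res_\ell(s)\in H^1_{\rm fin}(H_{c,\ell},E[p])$ and $H^1_{\rm fin}$ is its own annihilator under $\langle\,,\rangle_\ell$, the pairing $\langle\cdot,\res_\ell(s)\rangle_\ell$ depends only on the image in the singular quotient, so
\[
\bigl\langle d_\ell(\kappa(\ell)),\res_\ell(s)\bigr\rangle_\ell=0\qquad\text{for every }s\in\Sel_p(E/H_c).
\]

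To conclude, I would translate this vanishing into the claim about $\eta_\ell$. By construction, $\eta_\ell$ is the $\W$-linear map obtained by composing the isomorphism \eqref{iso-tate} (Tate duality with $\star={\rm sing}$, $\bullet={\rm fin}$) with the dual $(\res_\ell)^\vee\otimes{\rm id}$ of the Selmer restriction at $\ell$, tensored with $\W$ via $\chi$. Unwinding these identifications, $\eta_\ell\bigl(d_\ell^\chi(\kappa(\ell))\bigr)$ is the element of $\Sel_p(E/H_c)^\vee\otimes_\chi\W$ whose value at $s$ is (up to the $\W$-linear extension) $\langle d_\ell(\kappa(\ell)),\res_\ell(s)\rangle_\ell\otimes 1$, which is zero by the previous step.

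The main obstacle, in my view, is the careful bookkeeping in the last step: one has to verify that the $\chi$-twist in the definition of $\eta_\ell$ and of $d_\ell^\chi$ is compatible with the $G_c$-equivariance of the local Tate pairing (a point which relies on $\langle\sigma k,\sigma s\rangle_\ell=\langle k,s\rangle_\ell$), so that the vanishing statement $\langle d_\ell(\kappa(\ell)),\res_\ell(s)\rangle_\ell=0$ descends to zero in the $\W$-linearized target. All other ingredients are either purely formal (global reciprocity, isotropy) or already established: the crucial geometric input is Proposition \ref{prop-kummer}, which is precisely where the flat-cohomological analysis of $\S\ref{flat-subsec}$ takes care of the prime above $p$ without any ordinariness assumption.
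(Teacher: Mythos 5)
Your proposal is correct and follows essentially the same route as the paper's proof: global reciprocity \eqref{GRL}, Lemma \ref{bad-kummer-lemma} at primes dividing $N$, Proposition \ref{prop-kummer} at the remaining primes away from $\ell$, and then local Tate duality tensored with $\W$. The only difference is that you spell out explicitly the isotropy of ${\rm Im}(\delta_q)$ under the local pairing, which the paper leaves implicit.
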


\begin{proof} 
Let $s\in\Sel_p(E/H_c)$. Since $p\geq5$ is unramified in $H_c$ (because $p\nmid cD$ by condition $1$ in Assumption \ref{ass}), we can apply Proposition \ref{prop-kummer} to $P=\xi_c$ and get
\[ \big\langle\res_q(s),\res_q\bigl(\kappa(\ell)\bigr)\big\rangle_q=0 \]
for all primes $q\neq N\ell$. On the other hand, Lemma \ref{bad-kummer-lemma} says that ${\rm Im}(\delta_q)=0$ for all primes $q|N$, hence by definition of the Selmer group we conclude that
\[ \big\langle\res_q(s),\res_q\bigl(\kappa(\ell)\bigr)\big\rangle_q=0 \]
for these finitely many primes $q$ as well. Then equation \eqref{GRL} implies that
\[ \big\langle\res_\ell(s),\res_\ell\bigl(\kappa(\ell)\bigr)\big\rangle_\ell=0,\] 
which shows that $d_\ell\bigl(\kappa(\ell)\bigr)$ belongs to the kernel of the map 
\begin{equation} \label{dual-sel-eq}
H^1_{\rm sing}(H_{c,\ell},E[p])\longrightarrow\Sel_p^\vee(E/H_c) 
\end{equation} 
induced by the local Tate duality. By definition of $d_\ell^\chi$ and $\eta_\ell$, tensoring \eqref{dual-sel-eq} with $\W$ gives the result. \end{proof}

\subsection{Proof of the main result}

Now we are in a position to (re)state and prove the main result of our paper, that is the part of Theorem \ref{thm-intro-2} concerning the Selmer groups: all other results will follow from this. 

\begin{teo} \label{th-sel-n}
If $L_K(f,\chi,1)\neq 0$ then $\Sel_p(E/H_c)\otimes_\chi\W=0$.
\end{teo}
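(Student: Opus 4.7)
The plan is to argue by contradiction, assembling the pieces already constructed in Sections \ref{section-Galois-representations}--\ref{selmer-sec} into a short Euler-system argument. Suppose that $\Sel_p(E/H_c)\otimes_\chi\W\neq 0$. Since $\Sel_p(E/H_c)$ is annihilated by $p$, it is naturally an $\F_p[G_c]$-module, so Lemma \ref{lemma10.1} gives the identification
\[ \Sel_p(E/H_c)\otimes_\chi\W=\Sel_p(E/H_c)[I_{\chi_p}]\otimes_\chi\W, \]
and Lemma \ref{lemma-injection} together with Remark \ref{injection-rem} shows that $\Sel_p(E/H_c)[I_{\chi_p}]$ injects into the right-hand side. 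Hence our contradiction hypothesis produces a non-zero element $s\in\Sel_p(E/H_c)[I_{\chi_p}]$.

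Next, I would feed $s$ into Proposition \ref{existence-admissible-primes} to obtain an admissible prime $\ell$ relative to $f$ and $p$ for which $\res_\ell(s)\neq 0$ in $H^1(H_{c,\ell},E[p])$. Because $s\in\Sel_p(E/H_c)$ and $\ell\neq p$ is of good reduction for $E$ (it does not divide $Npc$), Proposition \ref{prop-fin-sing} guarantees that this restriction already lives in $H^1_{\rm fin}(H_{c,\ell},E[p])$. Consequently the hypothesis of Lemma \ref{aux-lemma-IV} is satisfied, and the induced map
\[ \eta_\ell:H^1_{\rm sing}(H_{c,\ell},E[p])\otimes_\chi\W\longrightarrow\Sel_p(E/H_c)^\vee\otimes_\chi\W \]
is non-zero.

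Now I would invoke the Euler system $\{\kappa(\ell)\}$ of Heegner-type classes. By Proposition \ref{nonvanishing-prop}, which is where the hypothesis $L_K(f,\chi,1)\neq 0$ enters through the explicit reciprocity law (Theorem \ref{rec-law}) and assumption \eqref{alg-nonzero-eq}, we have $d_\ell^\chi(\kappa(\ell))\neq 0$ in $H^1_{\rm sing}(H_{c,\ell},E[p])\otimes_\chi\W$. By Proposition \ref{kernel-eta-prop} (which relies on the flat-cohomological control of the Kummer image at primes above $p$ proved in Proposition \ref{prop-kummer} and on the triviality of local Kummer images at primes dividing $N$ from Lemma \ref{bad-kummer-lemma}), this same class lies in $\ker(\eta_\ell)$. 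However, by Lemma \ref{local-iso-lemma} the source $H^1_{\rm sing}(H_{c,\ell},E[p])\otimes_\chi\W$ is isomorphic to $\W/p\W$, a one-dimensional vector space over the field $\W/p\W$; a non-zero $\W$-linear map out of it must therefore be injective. Thus $\ker(\eta_\ell)=0$, contradicting the non-vanishing of $d_\ell^\chi(\kappa(\ell))$, and the proof is complete.

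The conceptually delicate step is the combination of Lemma \ref{aux-lemma-IV} with the one-dimensionality of the singular cohomology at $\ell$: it is this passage that lets a single admissible prime simultaneously detect $s$ and annihilate the Euler system class, which is why the careful $I_{\chi_p}$-torsion bookkeeping of Section \ref{algebraic-sec} was needed. All the genuinely hard work (the reciprocity law, the construction of admissible primes respecting eigenspaces, and the supersingular-friendly flat-cohomology computation) is packaged into the earlier propositions, so the theorem itself reduces to this one-line Euler-system contradiction.
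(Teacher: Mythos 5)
Your argument is correct and follows the same route as the paper's proof: reduce via Lemma~\ref{lemma10.1} to showing $\Sel_p(E/H_c)[I_{\chi_p}]=0$, pick an admissible $\ell$ detecting a hypothetical non-zero $s$ via Proposition~\ref{existence-admissible-primes}, and derive a contradiction by pitting Proposition~\ref{nonvanishing-prop} (non-vanishing of $d_\ell^\chi(\kappa(\ell))$) against Proposition~\ref{kernel-eta-prop} and Lemma~\ref{aux-lemma-IV} using the one-dimensionality from Lemma~\ref{local-iso-lemma}. Your wrap-up — a non-zero $\W$-linear map out of a one-dimensional $\W/p\W$-space is injective — is phrased slightly differently from the paper's (which notes that $d_\ell^\chi(\kappa(\ell))$ would generate the source, so lying in $\ker(\eta_\ell)$ forces $\eta_\ell=0$), but the two are logically identical; the extra invocation of Lemma~\ref{lemma-injection} in your opening reduction is harmless, though Lemma~\ref{lemma10.1} already suffices there.
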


\begin{proof} By Lemma \ref{lemma10.1}, it is enough to show that $\Sel_p(E/H_c)[I_{\chi_p}]=0$. Assume that $s\in\Sel_p(E/H_c)[I_{\chi_p}]$ is not zero. Choose an admissible prime $\ell$ such that $\res_\ell(s)\neq0$, whose existence is guaranteed by Proposition \ref{existence-admissible-primes}. Since $L_K(f,\chi,1)\not=0$, Proposition \ref{nonvanishing-prop} ensures that $d_\ell^\chi\bigl(\kappa(\ell)\bigr)$ is non-zero; then $d_\ell^\chi\bigl(\kappa(\ell)\bigr)$ generates $H^1_{\rm sing}(H_{c,\ell},E[p])\otimes_\chi\W$ over $\W$. On the other hand, Proposition \ref{kernel-eta-prop} says that $d_\ell^\chi\bigl(\kappa(\ell)\bigr)$ belongs to the kernel of the $\W$-linear map $\eta_\ell$, and this contradicts the non-triviality of $\eta_\ell$ that was shown in Lemma \ref{aux-lemma-IV}. \end{proof}

Now recall that for all $n\geq1$ the $p^n$-Shafarevich--Tate group $\Sha_{p^n}(E/H_c)$ of $E$ over $H_c$ is the cokernel of the Kummer map $\delta$, so that it fits into a short exact sequence of $G_c$-modules
\begin{equation} \label{sha-eq}
0\longrightarrow E(H_c)/p^nE(H_c)\overset{\delta}{\longrightarrow}\Sel_{p^n}(E/H_c)\longrightarrow \Sha_{p^n}(E/H_c)\longrightarrow 0.
\end{equation}
Equivalently, define
\[ \Sha(E/H_c):=\Ker\Big(H^1\bigl(H_c,E(\bar\Q)\bigr)\xrightarrow{\prod_q\res_q}\prod_q H^1\bigl(H_{c,q},E(\bar\Q)\bigr)\Big); \]
then $\Sha_{p^n}(E/H_c)$ is the $p^n$-torsion subgroup of $\Sha(E/H_c)$. As usual, set
\[ \Sel_{p^\infty}(E/H_c):=\dirlim_n\Sel_{p^n}(E/H_c),\qquad \Sha_{p^\infty}(E/H_c):=\dirlim_n\Sha_{p^n}(E/H_c), \]
so $\Sha_{p^\infty}(E/H_c)$ is the $p$-primary subgroup of $\Sha(E/H_c)$. Tensoring \eqref{sha-eq} with $\W$ over $\Z[G_c]$,
and using the fact that $\W$ is flat over $\Z[G_c]$ by Lemma \ref{alg-2}, we get a short exact sequence
\begin{equation} \label{sha-eq-II}
0\rightarrow\bigl(E(H_c)/p^nE(H_c)\bigr)\otimes_\chi\W\rightarrow\Sel_{p^n}(E/H_c)\otimes_\chi\W\rightarrow\Sha_{p^n}(E/H_c)\otimes_\chi\W\rightarrow 0.
\end{equation}
As a by-product of Theorem \ref{th-sel-n}, we also obtain

\begin{teo} \label{sel-sha-vanishing-teo}
If $L_K(f,\chi,1)\neq 0$ then
\[ \Sel_{p^n}(E/H_c)\otimes_\chi\W=0, \qquad \Sha_{p^n}(E/H_c)\otimes_\chi\W=0 \]
for all integers $n\geq1$.
\end{teo}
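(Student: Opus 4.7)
The plan is to prove the theorem by induction on $n$, with Theorem \ref{th-sel-n} serving as the base case $n=1$. The $\Sha$-statement will then follow automatically from the $\Sel$-statement: the exact sequence \eqref{sha-eq-II} is obtained by tensoring \eqref{sha-eq} with $\W$ over $\Z[G_c]$, and this tensoring preserves exactness by the flatness of $\W$ established in Lemma \ref{alg-2}. Hence, once $\Sel_{p^n}(E/H_c)\otimes_\chi\W=0$ is established, the vanishing of $\Sha_{p^n}(E/H_c)\otimes_\chi\W$ is automatic, and I focus on the Selmer groups.

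For the inductive step, I would use the short exact sequence of $G_{H_c}$-modules
\[
0 \longrightarrow E[p] \overset{\iota}{\longrightarrow} E[p^n] \overset{p}{\longrightarrow} E[p^{n-1}] \longrightarrow 0,
\]
where $\iota$ is the natural inclusion and the right map is multiplication by $p$. The surjectivity of $\rho_{E,p}$ (condition $2$ of Assumption \ref{ass}) together with the linear disjointness $H_c\cap\Q(E[p])=\Q$ (established inside the proof of Proposition \ref{existence-admissible-primes}) implies $E[p](H_c)=0$, and an immediate induction gives $E[p^k](H_c)=0$ for all $k\geq 1$. Consequently the long exact sequence in Galois cohomology begins with
\[
0 \longrightarrow H^1(H_c,E[p]) \overset{\iota_*}{\longrightarrow} H^1(H_c,E[p^n]) \overset{p_*}{\longrightarrow} H^1(H_c,E[p^{n-1}]),
\]
and similarly at every local place. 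The standard compatibility of the Kummer maps $\delta_q^{(k)}$ at the levels $k=1,\,n-1,\,n$---expressed by the morphisms between the classical Kummer sequences $0\to E[p^k]\to E\xrightarrow{p^k}E\to 0$---shows that both $\iota_*$ and $p_*$ carry Selmer classes to Selmer classes, and a diagram chase produces an exact sequence
\[
0 \longrightarrow \Sel_p(E/H_c) \overset{\iota_*}{\longrightarrow} \Sel_{p^n}(E/H_c) \overset{p_*}{\longrightarrow} \Sel_{p^{n-1}}(E/H_c).
\]

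Since $\W$ is flat over $\Z[G_c]$ by Lemma \ref{alg-2}, tensoring this sequence with $\W$ preserves its exactness; the outer terms vanish (the left by Theorem \ref{th-sel-n} and the right by the inductive hypothesis), so the middle term $\Sel_{p^n}(E/H_c)\otimes_\chi\W$ is squeezed between two trivial modules and must be zero, closing the induction.

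The principal obstacle I anticipate is verifying exactness of the Selmer sequence at the middle term. Injectivity of $\iota_*$ on Selmer groups is inherited directly from injectivity on $H^1$, and the composition $p_*\circ\iota_*$ vanishes because $p\circ\iota=0$ on $E[p]$; the delicate point is that a class $s\in\Sel_{p^n}(E/H_c)$ with $p_*(s)=0$ must lift to a class already in $\Sel_p(E/H_c)$. Equivalently, at each prime $q$ one must show that any $t\in H^1(H_{c,q},E[p])$ satisfying $\iota_*(t)\in\mathrm{Im}(\delta_q^{(n)})$ already lies in $\mathrm{Im}(\delta_q^{(1)})$. This is a purely local check relying on the injectivity of $\delta_q^{(n)}$ together with the explicit description $\iota_*\bigl(\mathrm{Im}(\delta_q^{(1)})\bigr)=\delta_q^{(n)}\bigl(p^{n-1}E(H_{c,q})/p^nE(H_{c,q})\bigr)$ furnished by the Kummer compatibility diagrams above.
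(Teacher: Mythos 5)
Your argument is correct in substance but follows a genuinely different route from the paper's. You prove the Selmer statement by induction on $n$ via the multiplication sequences $0\to E[p]\to E[p^n]\xrightarrow{\,p\,}E[p^{n-1}]\to 0$ and the resulting control sequence $0\to\Sel_p(E/H_c)\to\Sel_{p^n}(E/H_c)\to\Sel_{p^{n-1}}(E/H_c)$, and then deduce the $\Sha$-statement from the surjection in \eqref{sha-eq-II}. The paper goes the other way around: it first kills $\Sha_{p^n}(E/H_c)\otimes_\chi\W$ directly, using flatness to identify it with $\bigl(\Sha(E/H_c)\otimes_\chi\W\bigr)[p^n]$ and the elementary fact that a module with trivial $p$-torsion has trivial $p^n$-torsion; it then kills $\bigl(E(H_c)/p^nE(H_c)\bigr)\otimes_\chi\W$ by identifying it, via the triviality of the $p$-torsion of $E(H_c)$ and flatness, with $M/p^nM$ for $M=E(H_c)\otimes_\chi\W$, where $M/pM=0$ is already known from the case $n=1$; the vanishing of $\Sel_{p^n}(E/H_c)\otimes_\chi\W$ then drops out of \eqref{sha-eq-II}. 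The paper's route requires no cohomological control of Selmer groups up the $p$-power tower, only module-theoretic manipulations of the defining exact sequence, which is why it is shorter; your route is the more standard ``control theorem'' argument and would generalize to situations where one cannot argue termwise on \eqref{sha-eq-II}.

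One point in your inductive step deserves more care. In the local verification of exactness at $\Sel_{p^n}(E/H_c)$ you reduce (correctly, using the injectivity of $\delta_q^{(n-1)}$, not of $\delta_q^{(n)}$) to the implication that $\iota_*(t)\in\iota_*\bigl(\mathrm{Im}(\delta_q^{(1)})\bigr)$ forces $t\in\mathrm{Im}(\delta_q^{(1)})$. This is not automatic, because the local map $\iota_*:H^1(H_{c,q},E[p])\to H^1(H_{c,q},E[p^n])$ need not be injective: its kernel is the image of the connecting map $E[p^{n-1}](H_{c,q})\to H^1(H_{c,q},E[p])$, and local torsion points of $E$ need not vanish even though $E[p^{n-1}](H_c)=0$ globally. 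The argument survives because that connecting map is precisely the restriction of $\delta_q^{(1)}$ to the subgroup of torsion points, so $\ker(\iota_*)\subset\mathrm{Im}(\delta_q^{(1)})$; this observation should be made explicit, as it is the only place where the local exactness could conceivably fail.
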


\begin{proof}
The case $n=1$ is immediate from Theorem \ref{th-sel-n} and \eqref{sha-eq-II}. Since $\W$ is flat over $\Z[G_c]$, for all $n\geq1$ there is an equality
\[ \bigl(\Sha(E/H_c)\otimes_\chi\W\bigr)[p^n]=\Sha_{p^n}(E/H_c)\otimes_\chi\W. \]
Then the vanishing of $\Sha_p(E/H_c)\otimes_\chi\W$ implies that of $\Sha_{p^n}(E/H_c)\otimes_\chi\W$ for all $n\geq1$, and the statement about the Shafarevich--Tate groups is proved.

To prove the vanishing of the Selmer groups one can proceed as follows. By condition $2$ in Assumption \ref{ass}, the representation $\rho_{E,p}$ is surjective, hence the $p$-torsion of $E(H_c)$ is trivial (\cite[Lemma 4.3]{G2}). Thus for all $n\geq1$ there is a short exact sequence of $G_c$-modules
\[ 0\longrightarrow E(H_c)\overset{p^n}{\longrightarrow}E(H_c)\longrightarrow E(H_c)/p^nE(H_c)\longrightarrow 0, \]
where the second arrow is the multiplication-by-$p^n$ map. The flatness of $\W$ over $\Z[G_c]$ then shows that
\begin{equation} \label{E-p^n-eq}
\bigl(E(H_c)/p^nE(H_c)\bigr)\otimes_\chi\W\simeq\bigl(E(H_c)\otimes_\chi\W\bigr)\big/p^n\bigl(E(H_c)\otimes_\chi\W\bigr).
\end{equation}
But \eqref{sha-eq-II} implies that $\bigl(E(H_c)/pE(H_c)\bigr)\otimes_\chi\W=0$ because $\Sel_p(E/H_c)\otimes_\chi\W=0$ by Theorem \ref{th-sel-n}, hence \eqref{E-p^n-eq} immediately gives
\[ \bigl(E(H_c)/p^nE(H_c)\bigr)\otimes_\chi\W=0 \]
for all $n\geq1$. Since we already know that $\Sha_{p^n}(E/H_c)\otimes_\chi\W=0$, now sequence \eqref{sha-eq-II} yields
\[ \Sel_{p^n}(E/H_c)\otimes_\chi\W=0 \]
for all $n\geq1$, and this completes the proof of the theorem. \end{proof}

Since tensor product commutes with direct limits, the following result is an immediate consequence of Theorem \ref{sel-sha-vanishing-teo}.

\begin{coro}
If $L_K(f,\chi,1)\neq 0$ then
\[ \Sel_{p^\infty}(E/H_c)\otimes_\chi\W=0,\qquad \Sha_{p^\infty}(E/H_c)\otimes_\chi\W=0. \]
\end{coro}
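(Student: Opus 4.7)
The plan is to derive this corollary from Theorem \ref{sel-sha-vanishing-teo} via a purely formal argument based on the compatibility of tensor products with direct limits. All the arithmetic substance, including the Euler system construction, the explicit reciprocity law and the Tate duality machinery, has already been absorbed into the finite-level vanishing result, so what remains is essentially a single categorical manipulation.

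Concretely, I would begin by recalling the definitions $\Sel_{p^\infty}(E/H_c)=\dirlim_n\Sel_{p^n}(E/H_c)$ and $\Sha_{p^\infty}(E/H_c)=\dirlim_n\Sha_{p^n}(E/H_c)$ stated just before Theorem \ref{sel-sha-vanishing-teo}, where the transition maps are those induced by the natural $G_c$-equivariant inclusions $E[p^n]\hookrightarrow E[p^{n+1}]$. Next, I would invoke the functorial isomorphism
\[ \Big(\dirlim_n M_n\Big)\otimes_\chi\W\simeq\dirlim_n\bigl(M_n\otimes_\chi\W\bigr), \]
valid for every directed system $(M_n)$ of $\Z[G_c]$-modules because the functor $-\otimes_{\Z[G_c]}\W$ is a left adjoint and therefore commutes with all colimits. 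Applying this identity first to $M_n=\Sel_{p^n}(E/H_c)$ and then to $M_n=\Sha_{p^n}(E/H_c)$, and combining it with the vanishing of each $M_n\otimes_\chi\W$ furnished by Theorem \ref{sel-sha-vanishing-teo}, yields both statements at once.

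There is no real obstacle here; the only point worth checking is that the transition maps of the two inductive systems are $\Z[G_c]$-linear, which is automatic from the $G_c$-equivariance of the inclusions $E[p^n]\hookrightarrow E[p^{n+1}]$, so that tensoring over $\Z[G_c]$ is compatible with the colimit. Alternatively, one could argue element by element: any $x\in\Sel_{p^\infty}(E/H_c)\otimes_\chi\W$ (respectively $\Sha_{p^\infty}(E/H_c)\otimes_\chi\W$) is the image of some $x_n\in\Sel_{p^n}(E/H_c)\otimes_\chi\W$ (respectively $\Sha_{p^n}(E/H_c)\otimes_\chi\W$), which vanishes by Theorem \ref{sel-sha-vanishing-teo}, whence $x=0$.
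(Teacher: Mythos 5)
Your argument is correct and coincides with the paper's own proof, which likewise deduces the corollary from Theorem \ref{sel-sha-vanishing-teo} by the single observation that tensor product commutes with direct limits. The extra checks you mention (left-adjointness, $\Z[G_c]$-linearity of the transition maps, the element-by-element version) are all fine but add nothing beyond the one-line argument the paper gives.
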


\subsection{Applications} \label{applications-subsec}

The first consequence of Theorem \ref{th-sel-n} is Theorem \ref{thm-intro}, which we now restate.

\begin{teo}[Bertolini--Darmon] If $L_K(f,\chi,1)\neq 0$ then $E(H_c)^\chi=0$. \end{teo}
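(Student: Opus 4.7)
The plan is to deduce this theorem as an immediate formal consequence of two results that have already been established in the excerpt, namely Theorem \ref{th-sel-n} (vanishing of the $\chi$-twisted $p$-Selmer group) and Proposition \ref{alg-4} (vanishing of the $\chi$-component of the Mordell--Weil group follows from vanishing of the $\chi$-twisted Selmer group). No new geometric or cohomological input is needed; the whole content is a single application of existing statements once a suitable auxiliary prime $p$ has been chosen.

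First, I would fix a prime $p$ satisfying Assumption \ref{ass}. The existence of at least one such $p$ is guaranteed by the remark following Assumption \ref{ass}: the non-vanishing $L_K(f,\chi,1)\neq 0$ combined with Theorem \ref{Gross-formula} forces $\cl L(f,\chi)\neq 0$, so condition $(3)$ excludes only finitely many primes; condition $(2)$ excludes only finitely many more by Serre's open image theorem, since $E$ has no complex multiplication; and the remaining conditions exclude only finitely many primes by inspection. Thus the set of admissible $p$ is not merely non-empty but cofinite in the primes.

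For this fixed $p$, Theorem \ref{th-sel-n} gives $\Sel_p(E/H_c)\otimes_\chi\W=0$. Proposition \ref{alg-4} then yields $E(H_c)^\chi=0$ at once. This is the whole argument. I would also remark that it does not matter which admissible prime $p$ is chosen: the statement $E(H_c)^\chi=0$ is independent of $p$, so any single valid choice suffices to conclude.

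There is essentially no obstacle at this stage: the real work — the construction of the Euler system $\{\kappa(\ell)\}$, the explicit reciprocity law of Theorem \ref{rec-law}, the control of local Kummer images via flat cohomology (Proposition \ref{prop-flat} and Proposition \ref{prop-kummer}), and the Nakayama/flatness argument turning $\Sel_p\otimes_\chi\W=0$ into the statement about $E(H_c)^\chi$ — has already been carried out in the preceding sections. The only conceptual point worth flagging in the write-up is that one is passing from a statement that depends on an auxiliary $p$ to a $p$-free statement, and this is legitimate precisely because Proposition \ref{alg-4} requires only the existence of one good $p$, via the identification $E(H_c)^\chi = E(H_c)\otimes_\chi\C$ of Proposition \ref{alg-1} together with the fixed isomorphism $\C_p\simeq\C$ used in \eqref{tensor-can-eq}.
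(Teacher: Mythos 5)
Your proposal is correct and matches the paper's own proof, which likewise derives the theorem immediately by combining Proposition \ref{alg-4} with Theorem \ref{th-sel-n}; your additional remarks on the existence of a suitable prime $p$ and the $p$-independence of the conclusion are accurate and consistent with the remark following Assumption \ref{ass}.
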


\begin{proof} Immediate upon combining Proposition \ref{alg-4} and Theorem \ref{th-sel-n}. \end{proof}

As remarked in the introduction, this is the $\chi$-twisted conjecture of Birch and Swinnerton-Dyer for $E$ over $H_c$ in the case of analytic rank zero.

Now we want to show how, by specializing Theorem \ref{thm-intro-2} to the trivial character, one can prove the finiteness of $E(K)$ and obtain vanishing results for almost all $p$-Selmer groups of $E$ \emph{over} $K$. 

Recall that the conjecture of Shafarevich and Tate (ST conjecture, for short) predicts that if $E_{/F}$ is an elliptic defined over a number field $F$ then the Shafarevich--Tate group $\Sha(E/F)$ of $E$ over $F$ is finite. As pointed out (for real quadratic fields) in \cite[Theorem 4.1]{bdd}, the next theorem is a consequence of Kolyvagin's results on Euler systems of Heegner points.

\begin{teo}[Kolyvagin] \label{kol-teo}
Let $E_{/\Q}$ be an elliptic curve and let $K$ be a quadratic number field. If $L_K(E,1)\not=0$ then $E(K)$ and $\Sha(E/K)$ are finite.
\end{teo}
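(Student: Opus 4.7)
\emph{Plan.} My plan is to factor $L_K(E,s)$ over $\Q$, apply Kolyvagin's classical rank-zero theorem for elliptic curves over $\Q$ to both factors, and then transfer the finiteness statements from $\Q$ to $K$ via Weil restriction.

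First I would exploit the standard factorization $L_K(E,s)=L(E,s)\cdot L(E^K,s)$, where $E^K$ denotes the quadratic twist of $E$ by the non-trivial character of $\Gal(K/\Q)$. The hypothesis $L_K(E,1)\neq 0$ then forces both $L(E,1)\neq 0$ and $L(E^K,1)\neq 0$, so that $E$ and $E^K$ each have analytic rank zero over $\Q$. I would next invoke Kolyvagin's rank-zero theorem for elliptic curves over $\Q$---itself obtained from his Euler system of Heegner points on a cleverly chosen auxiliary imaginary quadratic field $K_0$ (selected so that the sign of the functional equation of $L_{K_0}(E,s)$ is $-1$, so that $L_{K_0}(E,s)$ has a simple zero at $s=1$, and so that the corresponding Heegner point is non-torsion thanks to Gross--Zagier combined with non-vanishing results for central derivatives of quadratic twists)---applied to both $E$ and $E^K$. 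This yields finiteness of $E(\Q)$, $\Sha(E/\Q)$, $E^K(\Q)$ and $\Sha(E^K/\Q)$.

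Then I would descend from $\Q$ to $K$ by Weil restriction. Setting $A:=\mathrm{Res}_{K/\Q}(E\times_\Q K)$, a comparison of $\ell$-adic Tate modules shows that $T_\ell A$ is induced from $T_\ell E$ along $\Gal(\bar\Q/K)\hookrightarrow\Gal(\bar\Q/\Q)$, and hence decomposes, as a $\Gal(\bar\Q/\Q)$-module, into the direct sum $T_\ell E\oplus T_\ell E^K$. Faltings' isogeny theorem then provides a $\Q$-isogeny $A\sim E\times_\Q E^K$. Any such isogeny induces maps on Mordell--Weil groups and on Shafarevich--Tate groups with finite kernel and cokernel; combined with the canonical identifications $A(\Q)=E(K)$ (defining property of Weil restriction) and $\Sha(A/\Q)=\Sha(E/K)$ (Shapiro's lemma in Galois cohomology), the finiteness of $E(\Q)\times E^K(\Q)$ and of $\Sha(E/\Q)\times\Sha(E^K/\Q)$ transfers immediately to $E(K)$ and $\Sha(E/K)$.

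The main obstacle is the first step's reliance on Kolyvagin's rank-zero theorem over $\Q$, which is a deep result resting on the full Euler-system machinery for Heegner points; invoking it is legitimate here since the entire statement of the theorem is explicitly attributed to Kolyvagin. By contrast, the subsequent Weil-restriction descent is essentially formal, and the $L$-function factorization is classical.
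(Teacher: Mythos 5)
Your proof is correct and follows essentially the same route as the paper's: factor $L_K(E,s)=L(E,s)\cdot L(E^K,s)$, deduce that both factors are non-zero at $s=1$, apply Kolyvagin's rank-zero theorem over $\Q$ to $E$ and to its twist $E^K$ (which, exactly as you indicate, rests on choosing auxiliary imaginary quadratic fields via the non-vanishing theorems for central derivatives due to Bump--Friedberg--Hoffstein and Murty--Murty, combined with Gross--Zagier), and then transfer the finiteness of Mordell--Weil and Shafarevich--Tate from $\Q$ to $K$. The only cosmetic divergence is in the last step, where the paper simply cites Kolyvagin's Corollary B while you carry out the standard descent via Weil restriction; that is a correct elaboration, though invoking Faltings is heavier than necessary since the $\Q$-isogeny $\mathrm{Res}_{K/\Q}(E_K)\sim E\times_{\Q}E^K$ can be exhibited directly.
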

\begin{proof}[Sketch of proof.] There is an equality of $L$-series
\[ L_K(E,s)=L(E,s)\cdot L(E_{(\varepsilon)},s) \]
where $E_{(\varepsilon)}$ is the twist of $E$ by the Dirichlet character $\varepsilon$ attached to $K$, hence both $L(E,1)$ and $L(E_{(\varepsilon)},1)$ are non-zero. By analytic results of Bump, Friedberg and Hoffstein (\cite{bfh}) and Murty and Murty (\cite{mm}) there exist auxiliary imaginary quadratic fields $K_1,K_2$ such that 
\begin{itemize}
\item $L'_{K_1}(E,1)\not=0$, $L'_{K_2}(E_{(\varepsilon)},1)\not=0$;
\item all primes dividing the conductor of $E$ (respectively, of $E_{(\varepsilon)}$) split in $K_1$ (respectively, in $K_2$). 
\end{itemize}
Then Kolyvagin's theorem (see, e.g., \cite{G2}, \cite{ko} and \cite{ru}) shows that $E(\Q)$, $E_{(\varepsilon)}(\Q)$, $\Sha(E/\Q)$ and $\Sha(E_{(\varepsilon)}/\Q)$ are finite, and this implies that $E(K)$ and $\Sha(E/K)$ are finite as well (see \cite[Corollary B]{ko} for details). \end{proof}

In other words, both the BSD conjecture and the ST conjecture are true for $E_{/K}$ as in the statement of the theorem. The consequence of Theorem \ref{kol-teo} we are interested in is the following

\begin{coro} \label{kol-coro}
Let $E_{/\Q}$ be an elliptic curve and let $K$ be a quadratic number field. If $L_K(E,1)\not=0$ then $\Sel_p(E/K)=\Sha_p(E/K)=0$ for all but finitely many primes $p$.
\end{coro}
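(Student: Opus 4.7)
The plan is to deduce this corollary as a purely formal consequence of Theorem \ref{kol-teo} together with the defining short exact sequence of the $p$-Selmer group. Under the hypothesis $L_K(E,1)\neq 0$, Theorem \ref{kol-teo} tells us that both $E(K)$ and $\Sha(E/K)$ are finite abelian groups, so I would let $T$ denote the (finite) set of rational primes dividing the product $|E(K)|\cdot|\Sha(E/K)|$.

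For any $p\notin T$, the finiteness of $E(K)$ combined with $p\nmid|E(K)|$ makes multiplication by $p$ an automorphism of $E(K)$, hence $E(K)/pE(K)=0$; similarly, $\Sha_p(E/K)=\Sha(E/K)[p]=0$ since $p\nmid|\Sha(E/K)|$. Feeding these two vanishings into the tautological descent sequence
\[ 0\longrightarrow E(K)/pE(K)\longrightarrow\Sel_p(E/K)\longrightarrow\Sha_p(E/K)\longrightarrow 0 \]
immediately forces $\Sel_p(E/K)=0$, which together with the triviality of $\Sha_p(E/K)$ gives the corollary for every $p\notin T$.

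There is essentially no obstacle here: the deep content is fully absorbed into Theorem \ref{kol-teo}, whose own proof combines the non-vanishing results of Bump--Friedberg--Hoffstein \cite{bfh} and Murty--Murty \cite{mm} with Kolyvagin's Euler system of Heegner points over auxiliary imaginary quadratic fields of analytic rank one. Since the aim in this section is simply to record the Selmer-theoretic reformulation of Kolyvagin's finiteness statements, no rank-zero technology from the main body of the paper needs to be invoked at this stage. The point of Theorem \ref{sel-K-intro-teo} in the introduction, by contrast, is precisely to offer an alternative proof of this same conclusion that replaces the rank-one input of Gross--Zagier and Kolyvagin by the rank-zero argument based on specializing Theorem \ref{th-sel-n} to the trivial character of $G_c$.
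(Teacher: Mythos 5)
Your proof is correct and fills in the routine deduction that the paper leaves implicit: the paper states Corollary \ref{kol-coro} immediately after Theorem \ref{kol-teo} without a proof block, taking it as self-evident. Your argument — extract the finite set $T$ of primes dividing $|E(K)|\cdot|\Sha(E/K)|$, observe that for $p\notin T$ both $E(K)/pE(K)$ and $\Sha_p(E/K)=\Sha(E/K)[p]$ vanish, and then read off $\Sel_p(E/K)=0$ from the Kummer descent sequence — is exactly the intended reasoning, and your closing remark correctly locates the role of Theorem \ref{sel-K-vanishing-teo} as the rank-zero alternative.
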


Let now $E_{/\Q}$ be an elliptic curve, let $K$ be an imaginary quadratic field and suppose that all the arithmetic assumptions made at the outset of this article are satisfied. Denote by $\boldsymbol 1=\boldsymbol 1_{G_c}$ the trivial character of the Galois group $G_c$. As anticipated a few lines above, we specialize Theorem \ref{thm-intro-2} to the case where $\chi=\boldsymbol 1$ and deduce results on the $p$-Selmer groups of $E$ over $K$. More precisely, in Theorem \ref{sel-K-vanishing-teo} we provide an alternative proof of Corollary \ref{kol-coro} for $E$ over $K$ which does not rely on either finiteness results for Shafarevich--Tate groups or Kolyvagin's results in rank one. 

Thus take $c=1$, $\chi=\boldsymbol 1_{G_1}$ and, for simplicity, set $H:=H_1$, $G:=G_1$. Observe that $H$ is the Hilbert class field of $K$ and $G\simeq\Pic(\cl O_K)$. Since $\chi$ is trivial, one has $\Z[\chi]=\Z$ and $\W=\Z_p$. Moreover, there is an equality 
\[ L_K(f,\boldsymbol 1_G,s)=L_K(E,s) \]
of $L$-functions. 

Before proving the main result of this section, we need a couple of auxiliary facts.

\begin{lemma} \label{sel-p-inj-lem}
There is an injection
\[ \Sel_p(E/K)\;\longmono\;\Sel_p(E/H)^G \]
for all prime numbers $p$ satisfying condition 2 in Assumption \ref{ass}.
\end{lemma}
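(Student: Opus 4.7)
The plan is to deduce the injection from the inflation--restriction exact sequence
\[ 0 \longrightarrow H^1(G, E[p]^{G_H}) \longrightarrow H^1(K, E[p]) \stackrel{\mathrm{res}_{H/K}}{\longrightarrow} H^1(H, E[p])^G. \]
First I will check that $\mathrm{res}_{H/K}$ sends $\Sel_p(E/K)$ into $\Sel_p(E/H)^G$: for any place $v$ of $K$ and any place $w$ of $H$ lying above $v$, functoriality of the Kummer exact sequence under the inclusion $K_v\hookrightarrow H_w$ ensures that the local restriction $H^1(K_v,E[p])\to H^1(H_w,E[p])$ carries $\mathrm{Im}(\delta_v)$ into $\mathrm{Im}(\delta_w)$, so the Selmer condition is preserved; the image then lies automatically in the $G$-invariants.

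Next, the kernel of the induced map $\mathrm{res}_{H/K}\colon\Sel_p(E/K)\to\Sel_p(E/H)^G$ is contained in $H^1(G,E[p]^{G_H})$, and I will argue that this group vanishes by showing $E[p]^{G_H}=0$. Since $\rho_{E,p}$ is surjective, the image $\rho_{E,p}(G_K)$ has index at most $2$ in $\GL_2(\F_p)$ and hence contains the commutator subgroup $\SL_2(\F_p)$; in particular $E[p]$ is irreducible as an $\F_p[G_K]$-module. Because $H$ is the Hilbert class field of $K$, the extension $H/K$ is Galois and $G_H$ is normal in $G_K$, so $E[p]^{G_H}$ is a $G_K$-stable submodule of $E[p]$ and by irreducibility must equal either $0$ or $E[p]$.

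The main obstacle is ruling out the second possibility $E[p]^{G_H}=E[p]$. In that case $G_H$ would act trivially on $E[p]$, forcing $\Q(E[p])\subseteq H$ and thus presenting $\GL_2(\F_p)=\Gal(\Q(E[p])/\Q)$ as a quotient of $\Gal(H/\Q)$. But $\Gal(H/\Q)$ is a generalized dihedral group, being a semidirect product $G\rtimes\Gal(K/\Q)$ in which complex conjugation acts on $G\simeq\Pic(\cl O_K)$ by inversion; its derived subgroup is therefore abelian. This contradicts the fact that the derived subgroup of $\GL_2(\F_p)$ is $\SL_2(\F_p)$, which is non-abelian. Hence $E[p]^{G_H}=0$, which yields the desired injection.
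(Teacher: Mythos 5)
Your argument is correct and follows essentially the same route as the paper: inflation--restriction for $H/K$ combined with the vanishing of $E[p]^{G_H}$, which the paper outsources to \cite[Lemma 4.3]{G2} and you prove directly via the generalized-dihedral structure of $\Gal(H/\Q)$ (this is in fact Gross's own argument). The only caveat is that your derived-subgroup step requires $p\geq3$ (for $p=2$ one has $\GL_2(\F_2)\cong S_3$, which is itself dihedral), but this is harmless since condition 1 of Assumption \ref{ass} forces $p\geq5$ in the setting where the lemma is applied.
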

\begin{proof} As explained in \cite[Lemma 4.3]{G2}, condition 2 in Assumption \ref{ass} ensures that $E$ has no $p$-torsion rational over $H$. Hence the group $H^1\bigl(G,E[p](H)\bigr)$ is trivial, and the desired injection is a consequence of the inflation-restriction sequence in Galois cohomology. \end{proof}

The group $G$ acts trivially on $\Sel_p(E/H)^G$ (by definition) and on $\Z_p$ (since $\chi=\boldsymbol 1_G$). The next lemma is an exercise in linear algebra.

\begin{lemma} \label{sel-tensor-lem}
There is an isomorphism
\[ \Sel_p(E/H)^G\otimes_\Z\Z_p\simeq\Sel_p(E/H)^G\otimes_{\Z[G]}\Z_p \]
of $\Z_p$-modules. 
\end{lemma}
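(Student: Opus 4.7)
The plan is to recognize that when $G$ acts trivially on both tensor factors, passing from $\Z$-tensor to $\Z[G]$-tensor introduces no new relations. Concretely, set $M:=\Sel_p(E/H)^G$, which by definition of $G$-invariants carries the trivial $G$-action. The $\Z[G]$-module structure on $\Z_p$ used on the right-hand side is the one induced by $\chi=\boldsymbol 1_G$, so $G$ also acts trivially on $\Z_p$; this means the $\Z[G]$-module $\Z_p$ factors as the extension of scalars of $\Z_p$ along the augmentation map $\epsilon:\Z[G]\twoheadrightarrow\Z$.

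First I would invoke associativity of the tensor product for the chain of ring maps $\Z[G]\twoheadrightarrow\Z\hookrightarrow\Z_p$, obtaining
\[ M\otimes_{\Z[G]}\Z_p \;=\; M\otimes_{\Z[G]}(\Z\otimes_\Z\Z_p) \;\simeq\; (M\otimes_{\Z[G]}\Z)\otimes_\Z\Z_p. \]
Next I would use the standard identification $M\otimes_{\Z[G]}\Z=M_G$ with the module of coinvariants (i.e., $M$ modulo the submodule generated by $\{gm-m:g\in G,\ m\in M\}$). Because $G$ acts trivially on $M$, every element $gm-m$ is zero, hence $M_G=M$. Substituting back gives the claimed isomorphism $M\otimes_{\Z[G]}\Z_p\simeq M\otimes_\Z\Z_p$.

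There is no real obstacle here; the statement is a purely formal consequence of standard tensor product identities together with the triviality of the two $G$-actions involved. Both isomorphisms in the chain are canonical and $\Z_p$-linear, so the resulting identification is an isomorphism of $\Z_p$-modules, which is what the lemma asserts.
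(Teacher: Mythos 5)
Your proof is correct. It reaches the same conclusion as the paper but packages the argument more conceptually: you factor the $\Z[G]$-structure on $\Z_p$ through the augmentation $\epsilon:\Z[G]\twoheadrightarrow\Z$, apply associativity of base change to reduce to $M\otimes_{\Z[G]}\Z=M_G$ (coinvariants), and note that $M_G=M$ because $G$ acts trivially on $M=\Sel_p(E/H)^G$. The paper's proof instead writes down the two candidate maps $m\otimes x\mapsto m\otimes' x$ and $m\otimes' x\mapsto m\otimes x$ by hand, checks each is well-defined using bilinearity and the triviality of the $G$-action, and verifies they are mutually inverse. Both arguments rest on exactly the same input (trivial $G$-action on source and target), so neither buys more generality than the other; your version is a little shorter and delegates the verifications to standard facts (associativity of tensor product over a chain of ring maps, the identification $M\otimes_{\Z[G]}\Z\simeq M_G$), while the paper's is self-contained and explicit about the maps realizing the isomorphism. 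Your final observation that every step is canonical and $\Z_p$-linear is the right thing to note, since the lemma asserts an isomorphism of $\Z_p$-modules.
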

\begin{proof} As a shorthand, set $M:=\Sel_p(E/H)^G$. To avoid confusion, denote $m\otimes x$ and $m\otimes'x$ the images of a pair $(m,x)\in M\times\Z_p$ in $M\otimes_\Z\Z_p$ and $M\otimes_{\Z[G]}\Z_p$, respectively. The map
\[ M\times\Z_p\longrightarrow M\otimes_{\Z[G]}\Z_p,\qquad (m,x)\longmapsto m\otimes'x \]
is clearly $\Z$-bilinear, hence it induces a map
\[ M\otimes_\Z\Z_p\overset{\varphi}{\longrightarrow}M\otimes_{\Z[G]}\Z_p,\qquad m\otimes x\longmapsto m\otimes' x \]
of abelian groups. Analogously, since $G$ acts trivially, there is a map
\[ M\otimes_{\Z[G]}\Z_p\overset{\psi}{\longrightarrow}M\otimes_\Z\Z_p,\qquad m\otimes'x\longmapsto m\otimes x \]
of $\Z[G]$-modules. Of course, the maps $\varphi$ and $\psi$ are inverse of each other, hence we get an isomorphism
\[ M\otimes_\Z\Z_p\simeq M\otimes_{\Z[G]}\Z_p \]
of abelian groups (actually, of $\Z[G]$-modules, but we will not need this fact). Since $\varphi$ and $\psi$ are $\Z_p$-linear, the lemma is proved. \end{proof}

Now we can prove the following result, which is a consequence of Theorem \ref{thm-intro-2} and was stated in the introduction as Theorem \ref{sel-K-intro-teo} and in this $\S$ as Corollary \ref{kol-coro}.

\begin{teo} \label{sel-K-vanishing-teo} 
If $L_K(E,1)\not=0$ then $\Sel_p(E/K)=\Sha_p(E/K)=0$ for all but finitely many primes $p$ and $E(K)$ is finite.
\end{teo}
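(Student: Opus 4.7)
The plan is to deduce this theorem by specializing Theorem \ref{th-sel-n} to $c=1$ and to the trivial character $\chi = \boldsymbol 1_G$ of $G = G_1 = \Gal(H/K)$, where $H = H_1$ is the Hilbert class field of $K$. In this specialization $\Z[\chi] = \Z$ and $\W = \Z_p$, and there is an equality of $L$-functions $L_K(f,\boldsymbol 1_G, s) = L_K(E,s)$, so the non-vanishing hypothesis of Theorem \ref{th-sel-n} reduces to $L_K(E,1) \neq 0$. Thus for every prime $p$ satisfying Assumption \ref{ass} (a cofinite set of primes, as noted in the remark following that assumption) Theorem \ref{th-sel-n} yields
\[ \Sel_p(E/H) \otimes_{\boldsymbol 1} \Z_p = 0. \]

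I would then transfer this vanishing from $H$ down to $K$ by exploiting the flatness results already set up. The $G$-equivariant inclusion $\Sel_p(E/H)^G \hookrightarrow \Sel_p(E/H)$, combined with the flatness of $\Z_p = \W$ over $\Z[G]$ furnished by Lemma \ref{alg-2}, yields an injection
\[ \Sel_p(E/H)^G \otimes_{\boldsymbol 1} \Z_p \;\longmono\; \Sel_p(E/H) \otimes_{\boldsymbol 1} \Z_p = 0. \]
Since $\Sel_p(E/H)^G$ is $p$-torsion and carries trivial $G$-action, Lemma \ref{sel-tensor-lem} identifies the tensor product $\Sel_p(E/H)^G \otimes_{\boldsymbol 1} \Z_p$ with $\Sel_p(E/H)^G \otimes_\Z \Z_p = \Sel_p(E/H)^G$ itself. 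Hence $\Sel_p(E/H)^G = 0$, and Lemma \ref{sel-p-inj-lem} immediately gives $\Sel_p(E/K) = 0$.

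To finish, I would invoke the Kummer short exact sequence
\[ 0 \longrightarrow E(K)/pE(K) \overset{\delta}{\longrightarrow} \Sel_p(E/K) \longrightarrow \Sha_p(E/K) \longrightarrow 0 \]
(compare with \eqref{sha-eq}): the vanishing of the middle term forces both $\Sha_p(E/K) = 0$ and $E(K)/pE(K) = 0$ for all such $p$. Writing $E(K) \simeq \Z^r \oplus T$ with $T$ finite and picking any one $p$ satisfying Assumption \ref{ass} which in addition is coprime to $|T|$, the isomorphism $E(K)/pE(K) \simeq (\Z/p\Z)^r \oplus T/pT = 0$ forces $r = 0$, so $E(K)$ is finite.

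I do not anticipate any serious obstacle: once Theorem \ref{th-sel-n} is in hand the argument is entirely formal. The only genuine subtlety is the passage between the $\chi$-twisted tensor product and the Galois invariants, which is exactly what the preparatory Lemmas \ref{alg-2}, \ref{sel-p-inj-lem} and \ref{sel-tensor-lem} are designed to handle; the role of condition~1 in Assumption \ref{ass} (ensuring $p \nmid |G|$) is hidden inside the flatness of $\W$ over $\Z[G]$.
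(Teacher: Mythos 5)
Your proposal is correct and follows the paper's own proof essentially step for step: specialize Theorem \ref{th-sel-n} to $c=1$ and the trivial character, use flatness of $\Z_p$ over $\Z[G]$ together with Lemmas \ref{sel-tensor-lem} and \ref{sel-p-inj-lem} to descend from $\Sel_p(E/H)\otimes_{\Z[G]}\Z_p=0$ to $\Sel_p(E/K)=0$. The only difference is that you spell out the final deduction of $\Sha_p(E/K)=0$ and the finiteness of $E(K)$ from the Kummer sequence, which the paper dismisses with ``it suffices to prove the vanishing of the $p$-Selmer groups.''
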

\begin{proof} Of course, it suffices to prove the vanishing of the $p$-Selmer groups. Suppose that $p$ satisfies Assumption \ref{ass}. By Theorem \ref{thm-intro-2}, we know that $\Sel_p(E/H)\otimes_{\Z[G]}\Z_p=0$, hence
\begin{equation} \label{sel-inv-tensor-eq}
\Sel_p(E/H)^G\otimes_{\Z[G]}\Z_p=0 
\end{equation} 
since $\Z_p$ is flat over $\Z[G]$ by Lemma \ref{alg-2}. Combining \eqref{sel-inv-tensor-eq} and Lemma \ref{sel-tensor-lem}, we get
\begin{equation} \label{sel-inv-tensor-eq2}
\Sel_p(E/H)^G\otimes_\Z\Z_p=0.
\end{equation}
But $\Sel_p(E/H)^G$ is a finite dimensional $\F_p$-vector space, so \eqref{sel-inv-tensor-eq2} implies that 
\[ \Sel_p(E/H)^G=0. \]
The vanishing of $\Sel_p(E/K)$ follows from Lemma \ref{sel-p-inj-lem}. \end{proof}

\begin{rem}
1. As is clear from the proof, to obtain Theorem \ref{sel-K-vanishing-teo} one can fix an arbitrary $c$ prime to $ND$, as in the rest of the paper: we chose $c=1$ only to avoid excluding more primes $p$ than necessary for the purposes of this section.

2. The BSD conjecture for $E$ over $K$ in the case of analytic rank $0$, which was already proved in Theorem \ref{kol-teo},  can also be easily deduced from Theorem \ref{thm-intro} by specialization to the trivial character of $G_c$.
\end{rem}  

As already remarked, from our point of view the proof of Theorem \ref{sel-K-vanishing-teo} is interesting because it shows that, in the case of analytic rank zero, the vanishing of (almost all) the $p$-Selmer groups $\Sel_p(E/K)$ of an elliptic curve $E_{/\Q}$ can be obtained (at least when $K$ is an imaginary quadratic field) without resorting to auxiliary results in rank one as in the classical arguments due to Kolyvagin. 

\appendix

\section{Cohomology and Galois extensions} \label{appendix}

The purpose of this appendix is to describe certain Galois-theoretic properties of field extensions cut out by cohomology classes that are used in the main body of the paper. Although of an elementary nature, these results are somewhat hard to find in the literature, so for the convenience of the reader we decided to include them here.

Quite generally, in the following we let $F$ be a field (of arbitrary characteristic, though we apply our results only for
number fields) and let $F^s$ be the separable closure of $F$ in a fixed algebraic closure $\bar F$. All field extensions of $F$ that we consider will be contained in $F^s$, so we need not bother about separability issues (in particular, an extension $E/K$ is Galois if and only if it is normal).

For any extension $K/F$ contained in $F^s$ let $G_K:=\Gal(F^s/K)$. For any Galois extension $K/F$ contained in $F^s$ and any abelian discrete $G_K$-module (respectively, $\Gal(K/F)$-module) $M$ let $H^i(G_K,M)$ (respectively, $H^i(\Gal(K/F),M)$) denote the $i$-th continuous cohomology groups of $G_K$ (respectively, $\Gal(K/F)$) with values in $M$.

Fix a $G_F$-module $M$ and a Galois extension $K/F$. For any Galois extension $E/K$, the group $H^1(G_E,M)$ has a structure of a right $G_K$-module; the structure map is denoted $c\mapsto c^g$ for $c\in H^1(G_E,M)$ and $g\in G_K$, and defined by
\[ (c^g)(h):=g\bigl(c(g^{-1}hg)\bigr) \]
for all $h\in G_E$. Since the normal subgroup $G_E\subset G_K$ acts trivially on $H^1(G_E,M)$, this group becomes a $\Gal(E/K)$-module. Denote
\[ 0\longrightarrow H^1(\Gal(E/K),M^{G_E})\xrightarrow{{\rm inf}_{E/K}}H^1(G_K,M)\xrightarrow{\res_{E/K}}H^1(G_E,M)^{\Gal(E/K)} \]
the inflation-restriction exact sequence in Galois cohomology.

Fix a Galois extension $E$ of $F$ contained in $F^s$ and containing $K$ such that the subgroup $G_E$ of $G_F$ acts trivially
on $M$. In this case,
\[ H^1(G_E,M)^{\Gal(E/K)}=\Hom_{\Gal(E/K)}(G_E,M). \]
Take a non-zero class $s\in H^1(G_K,M)$ and assume, to avoid trivialities, that $\bar s:=\res_{E/K}(s)\neq 0$. This is the case, for example, if $M$ is irreducible as a $G_E$-module because in this situation $M^{G_E}=0$ and $\res_{E/K}$ is injective.

Let $E(s)$ denote the extension of $E$ cut out by $s$, that is, the field $E(s)$ such that the kernel of $\bar s$ is $G_{E(s)}$. Since $\bar s\neq 0$, $E(s)\neq E$. Moreover, the extension $E(s)/E$ is Galois. If $h\in G_E$ and $g\in G_F$ then $g^{-1}hg\in G_E$ because $E/F$ is Galois. In particular, if $h\in G_E$ and $g\in G_F$ then $g^{-1}hg\in G_{E(s)}$ if and only if $\bar s(g^{-1}hg)=0$.

\begin{prop} \label{app-prop-1}
The extension $E(s)/K$ is Galois.
\end{prop}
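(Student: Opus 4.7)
The plan is to reduce the statement to showing that $G_{E(s)}$ is a normal subgroup of $G_K$, then to verify normality by a direct conjugation argument using the Galois-equivariance hypothesis on $\bar{s}$. Since $E(s)$ is defined as the fixed field of $G_{E(s)} = \ker(\bar{s})$, the extension $E(s)/K$ is Galois if and only if $\ker(\bar{s})$ is stable under conjugation by every element of $G_K$.

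First I would observe that for $h \in G_{E(s)} \subset G_E$ and $g \in G_K$, the conjugate $g^{-1}hg$ automatically lies in $G_E$. This is because $G_E \triangleleft G_K$: the extension $E/F$ is Galois and $K \subset E$, so $E/K$ is Galois as well. Thus the question reduces entirely to showing $\bar{s}(g^{-1}hg) = 0$ whenever $\bar{s}(h) = 0$.

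The key step is then to unwind the meaning of the hypothesis $\bar{s} \in H^1(G_E, M)^{\Gal(E/K)}$ in terms of the explicit $G_K$-action
\[ (c^g)(h) = g\bigl(c(g^{-1}hg)\bigr) \]
recalled immediately before the proposition. Because $G_E$ acts trivially on $M$, the group $H^1(G_E,M)$ is literally $\Hom(G_E,M)$ (all coboundaries vanish), so $\bar{s}$ is represented uniquely by a homomorphism, and the invariance $\bar{s}^g = \bar{s}$ becomes the pointwise identity
\[ g\bigl(\bar{s}(g^{-1}hg)\bigr) = \bar{s}(h)\qquad \text{for all }g \in G_K,\ h \in G_E. \]
Equivalently, $\bar{s}(g^{-1}hg) = g^{-1}\bigl(\bar{s}(h)\bigr)$. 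Specializing to $h \in G_{E(s)}$, the right-hand side is $g^{-1}(0) = 0$, so $g^{-1}hg \in \ker(\bar{s}) = G_{E(s)}$, establishing normality.

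I do not anticipate a serious obstacle here: the entire argument is a bookkeeping exercise using the definition of the $G_K$-action on $H^1(G_E,M)$ together with the uniqueness of cocycle representatives in the trivial-action case. The only point requiring care is to note that the trivial action of $G_E$ on $M$ ensures $\bar{s}$ itself (not merely its cohomology class) is $\Gal(E/K)$-invariant, so the identity above can be applied to individual elements $h \in G_E$ rather than merely up to coboundaries.
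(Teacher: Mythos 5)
Your argument is correct and is essentially identical to the paper's: both reduce normality of $G_{E(s)}$ in $G_K$ to the identity $g(\bar s(g^{-1}hg)) = \bar s(h)$ that follows from $\bar s \in \Hom_{\Gal(E/K)}(G_E,M)$, and conclude by evaluating at $h \in \ker(\bar s)$. Your extra remark that triviality of the $G_E$-action makes $\bar s$ a genuine homomorphism (not just a cohomology class) is a useful clarification of a point the paper leaves implicit, but the proof is the same.
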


\begin{proof} The extension $E(s)/K$ is Galois if and only if $G_{E(s)}$ is a normal subgroup of $G_K$ or, equivalently, if and only if  $\bar s(g^{-1}hg)=0$ for all $h\in G_{E(s)}$ and $g\in G_K$. Since $\bar s\in\Hom_{\Gal(E/K)}(G_E,M)$, one has
\[ g\bigl(\bar s(g^{-1}hg)\bigr)=\bar s(h)=0 \]
for all $h\in G_{E(s)}$ and $g\in G_K$, and we are done. \end{proof}
The Galois group $\Gal(E(s)/E)$ is not trivial and injects into $M$ as a $G_K$-module, hence
\begin{equation} \label{N}
\Gal(E(s)/E)\simeq N(s)
\end{equation}
for a non-trivial $G_K$-submodule $N(s)$ of $M$.

\begin{prop} \label{app-prop-2}
The group $\Gal(E(s)/K)$ is isomorphic to the semidirect product
\[ \Gal(E(s)/K)\simeq N(s)\rtimes \Gal(E/K), \]
where the quotient $\Gal(E/K)$ acts on the abelian normal subgroup $N(s)$ by $n\mapsto \tilde g(n)$ for $n\in N(s)$ and
$g\in\Gal(E/K)$, where $\tilde g$ is any lift of $g$ to $G_K$.
\end{prop}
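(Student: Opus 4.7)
The plan is to exhibit an explicit splitting of the exact sequence
\begin{equation*}
1 \to \Gal(E(s)/E) \to \Gal(E(s)/K) \to \Gal(E/K) \to 1
\end{equation*}
and to identify the conjugation action on the kernel with the prescribed module action. The main tool will be the cocycle $s$ itself, combined with the identification $\Gal(E(s)/E) \simeq N(s)$ recorded in \eqref{N} and induced by $\bar s$.

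First, I would fix a continuous $1$-cocycle $\tilde s: G_K \to M$ representing the class $s$ and any set-theoretic section $\sigma: \Gal(E/K) \to G_K$ of the canonical surjection $G_K \twoheadrightarrow \Gal(E/K)$. This descends to a set-theoretic section $\bar\sigma: \Gal(E/K) \to \Gal(E(s)/K)$ of the exact sequence above. The failure of $\bar\sigma$ to be a homomorphism is measured by the $2$-cocycle
\begin{equation*}
c(g_1, g_2) \;:=\; \bar\sigma(g_1)\bar\sigma(g_2)\bar\sigma(g_1g_2)^{-1} \;\in\; \Gal(E(s)/E) \simeq N(s),
\end{equation*}
which represents the extension class in $H^2\bigl(\Gal(E/K), N(s)\bigr)$. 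The key computation, using the cocycle identity for $\tilde s$ together with the triviality of the $G_E$-action on $M$ (which forces any element of $G_E$ that appears as a product $\sigma(g_1)\sigma(g_2)\sigma(g_1g_2)^{-1}$ to act trivially on the $M$-values), yields the identity
\begin{equation*}
\bar s\bigl(c(g_1, g_2)\bigr) \;=\; t(g_1) \,+\, g_1 \cdot t(g_2) \,-\, t(g_1 g_2),
\end{equation*}
where $t := \tilde s \circ \sigma: \Gal(E/K) \to M$; in other words, $c$ equals the coboundary of $t$ in $M$-valued cochains.

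The main obstacle is to pass from this identity (which only says $c$ is a coboundary with values in the larger module $M$) to the conclusion that $c$ is a coboundary in $N(s)$-valued cochains, so that the extension class in $H^2\bigl(\Gal(E/K), N(s)\bigr)$ really is trivial. I would handle this by exploiting the freedom to modify $\sigma$ by a function $\Gal(E/K) \to G_E$; since $\bar s(G_E) = N(s)$, such a modification alters $t$ by an $N(s)$-valued cochain, which provides exactly the flexibility needed to arrange that $t$ lands in $N(s) \subset M$ in the situation at hand. Once such a refinement is achieved, $c = dt$ with $t$ valued in $N(s)$, so the sequence splits. Finally, with the splitting in hand, the induced action of $\Gal(E/K)$ on $N(s)$ is by conjugation of $\Gal(E(s)/E)$ inside $\Gal(E(s)/K)$; under $\bar s$, this conjugation corresponds to the $G_K$-module action on $N(s) \subset M$, and since $G_E$ acts trivially on $M$ the action descends to $\Gal(E/K)$ and matches the formula $n \mapsto \tilde g(n)$ of the statement.
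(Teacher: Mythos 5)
Your cocycle computation is correct, and you have correctly identified the real issue: the identity $\bar s(c)=dt$ only exhibits $c$ as a coboundary with $t$ valued in $M$, whereas splitting the extension requires coefficients in $N(s)$. The fix you propose, however, cannot work. Modifying the section $\sigma$ by a map $h\colon\Gal(E/K)\to G_E$ changes $t(g)=\tilde s(\sigma(g))$ to $t(g)+\bar s(h(g))$ (using the cocycle relation together with the triviality of the $G_E$-action), i.e.\ by an $N(s)$-valued function, so the reduction $\bar t := t \bmod N(s)\colon\Gal(E/K)\to M/N(s)$ is unchanged by any such modification. One checks that $\bar t$ is a $1$-cocycle and that the extension class $[c]\in H^2\bigl(\Gal(E/K),N(s)\bigr)$ is precisely the image of $[\bar t]$ under the connecting homomorphism of $0\to N(s)\to M\to M/N(s)\to 0$, and there is no reason for this to vanish. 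In fact the proposition as stated is \emph{false}: take $F=K=\Q$, $M=\Z/p^2\Z$ with trivial Galois action, let $s\colon G_\Q\to\Z/p^2\Z$ be the homomorphism cutting out a cyclic degree-$p^2$ extension $L/\Q$ (e.g.\ inside the cyclotomic $\Z_p$-extension), and let $E$ be the degree-$p$ subfield. Then $\bar s=s|_{G_E}$ has image $N(s)=p\Z/p^2\Z\simeq\Z/p\Z$, $E(s)=L$, and $\Gal(E(s)/K)\simeq\Z/p^2\Z$, which is not a semidirect product of $\Z/p\Z$ by $\Z/p\Z$.

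The result is nevertheless correct in the only case actually used in the paper, namely when $M$ is irreducible as a $G_K$-module (as holds for $M=E[p]$ in the proof of Proposition \ref{existence-admissible-primes}): then $N(s)=M$, so $t$ is automatically $N(s)$-valued and your identity $c=dt$ already exhibits $c$ as a coboundary in $H^2\bigl(\Gal(E/K),N(s)\bigr)$ with no modification of $\sigma$ needed; the description of the action $n\mapsto\tilde g(n)$ then follows from the $\Gal(E/K)$-equivariance of $\bar s$. It is worth noting that the paper's own proof is a one-liner which merely invokes the conjugation action and never addresses why the extension splits, so your more explicit cocycle argument is genuinely more informative: it supplies the missing splitting argument in the irreducible case and at the same time reveals that the proposition requires an irreducibility hypothesis that the paper only imposes later, just before Proposition \ref{app-prop-4}.
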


\begin{proof} The natural action by conjugation of $\Gal(E/K)$ on $\Gal(E(s)/E)$ translates into an action on $N(s)$ via the isomorphism \eqref{N}, and this gives $\Gal(E(s)/K)$ a structure of semidirect product as above. \end{proof}

In general, the extension $E(s)/F$ is not Galois. The canonical action of $\Gal(K/F)$ on $H^1(G_K,M)$ induces an action of
$\Gal(K/F)$ on $H^1(G_E,M)^{\Gal(E/K)}$ by restriction. Explicitly, let $c\in H^1(G_E,M)^{\Gal(E/K)}$ and $g\in\Gal(K/F)$, choose an extension $\tilde g\in G_F$ of $g$ and set
\[ (c^g)(h):=\tilde g\bigl(c(\tilde g^{-1}h\tilde g)\bigr). \]
Since $K/F$ is Galois, $G_K$ is normal in $G_F$ and the above action does not depend on the choice of $\tilde g$ because $c$ is $G_K$-invariant. With $s$ as before, for any $g\in\Gal(K/F)$ consider the extension $E(s^g)$ of $E$ cut out by $\bar s^g$. Moreover, denote $L$ the composite of the extensions $E(s^g)$ with $g$ ranging over $\Gal(K/F)$.

\begin{prop} \label{app-prop-3}
The extension $L/F$ is Galois.
\end{prop}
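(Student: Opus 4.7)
The plan is to verify $L/F$ is Galois by the standard criterion that its absolute Galois subgroup $G_L=\bigcap_{g\in\Gal(K/F)}G_{E(s^g)}$ is normal in $G_F$, equivalently that $\sigma(L)=L$ for every $\sigma\in G_F$. Since $L$ is a composite of subfields indexed by $\Gal(K/F)$, it suffices to show that $\sigma$ permutes the family $\{E(s^g)\}_{g\in\Gal(K/F)}$. Concretely, I would prove that for every $\sigma\in G_F$, if $g_0:=\sigma|_K\in\Gal(K/F)$, then
\[
\sigma\bigl(E(s^g)\bigr)=E(s^{g_0 g})\qquad\text{for all }g\in\Gal(K/F).
\]

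The central computation is to identify the conjugate subgroup $\sigma\,G_{E(s^g)}\,\sigma^{-1}\subset G_F$. Since $E/F$ is Galois, $G_E$ is normal in $G_F$, so conjugation by $\sigma$ preserves $G_E$; thus an element $h\in G_E$ lies in $\sigma\,G_{E(s^g)}\,\sigma^{-1}$ iff $\overline{s^g}(\sigma^{-1}h\sigma)=0$. Now choose a lift $\tilde g\in G_F$ of $g$; then $\sigma\tilde g$ is a lift of $g_0 g$. Unwinding the definition of the action of $\Gal(K/F)$ on $H^1(G_E,M)^{\Gal(E/K)}$ gives, for $h\in G_E$,
\[
\overline{s^g}(\sigma^{-1}h\sigma)=\tilde g\bigl(\bar s(\tilde g^{-1}\sigma^{-1}h\sigma\tilde g)\bigr),\qquad \overline{s^{g_0 g}}(h)=\sigma\tilde g\bigl(\bar s(\tilde g^{-1}\sigma^{-1}h\sigma\tilde g)\bigr),
\]
so $\overline{s^{g_0 g}}(h)=\sigma\bigl(\overline{s^g}(\sigma^{-1}h\sigma)\bigr)$. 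Because $\sigma$ acts injectively on $M$, the two vanish simultaneously, whence $\sigma\,G_{E(s^g)}\,\sigma^{-1}=G_{E(s^{g_0 g})}$, which translates into the displayed equality of fields.

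Finally, since left multiplication by $g_0$ is a bijection of $\Gal(K/F)$, one has
\[
\sigma(L)=\sigma\!\left(\prod_{g\in\Gal(K/F)}E(s^g)\right)=\prod_{g\in\Gal(K/F)}E(s^{g_0 g})=L,
\]
so $G_L$ is normal in $G_F$ and $L/F$ is Galois. The only genuinely delicate point is the compatibility in the key computation: one has to confirm that the two different descriptions of the action on $\bar s$ (the one coming from $\Gal(K/F)$ via a chosen lift, and the one coming from conjugation by $\sigma\in G_F$ inside $G_F$) agree modulo the trivial $G_E$-action, which is precisely what makes the choice of lift irrelevant and the argument go through cleanly.
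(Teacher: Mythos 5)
Your proof is correct and rests on exactly the same computation as the paper's: the identity $\overline{s^{\bar k g}}(h)=k\bigl(\overline{s^g}(k^{-1}hk)\bigr)$ obtained by replacing a lift $\tilde g$ of $g$ by $k\tilde g$, a lift of $\bar k g$. The only cosmetic difference is organizational: you package the computation as the statement that conjugation by $\sigma\in G_F$ permutes the subgroups $G_{E(s^g)}$ (explicitly, $\sigma G_{E(s^g)}\sigma^{-1}=G_{E(s^{g_0g})}$ with $g_0=\sigma|_K$), and then deduce normality of the intersection, whereas the paper verifies directly that for $h\in G_L$ and $k\in G_F$ one has $\bar s^g(k^{-1}hk)=0$ for every $g$. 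Both routes are the same length and use the same ingredients; your phrasing makes the permutation of the fields $E(s^g)$ explicit, which is a mild conceptual bonus but not a genuinely different argument.
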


\begin{proof} The extension $L/F$ is Galois if and only if $G_L$ is a normal subgroup of $G_F$ or, equivalently, if and only if for any $h\in G_L$ one has $\bar s^g(k^{-1}hk)=0$ for all $g\in\Gal(K/F)$ and all $k\in G_F$. If $\tilde g\in G_F$ is an extension of $g$ then 
\[ (\bar s^g)(k^{-1}hk)=\tilde g\bigl(\bar s(\tilde g^{-1}k^{-1}hk\tilde g)\bigr). \] 
On the other hand, by assumption, if $g$ and $\tilde g$ are as above then
\[ k\tilde g\bigl(\bar s(\tilde g^{-1}k^{-1}hk\tilde g)\bigr)=\bar s^{\bar kg}(h)=0 \] 
with $\bar k:=k|_K$, hence
\[ \bar s\bigl(\tilde g^{-1}k^{-1}hk\tilde g\bigr)=0. \]
The proposition is proved. \end{proof}

Assume from now on that the $G_K$-module $M$ is \emph{finite} and \emph{irreducible}. Then $N(s^g)=M$ for all $g\in\Gal(K/F)$. Fix two distinct elements $g$ and $h$ in $\Gal(K/F)$. The group
\[ \Gal\bigl(E(s^g)/E(s^g)\cap E(s^h)\bigr) \] 
is identified via $s^g$ with a $G_K$-submodule of $M$, hence (by the irreducibility of $M$) either the extensions $E(s^g)$ and $E(s^h)$ coincide or they are linearly disjoint over $E$. In any case, $\Gal(L/E)$ is isomorphic to a product of copies of $M$ indexed by a subset $S\subset\Gal(K/F)$ which is minimal among the subsets of $\Gal(K/F)$ with the property that $\bigl\{E(s^g)\mid g\in S\bigr\}$ is equal to $\bigl\{E(s^g)\mid g\in\Gal(K/F)\bigr\}$. The isomorphism
\[ \Gal(L/E)=\prod_{g\in S}\Gal(E(s^g)/E)\simeq M^{\#S} \]
is explicitly given by $(h_g)_{g\in S}\mapsto\bigl(\bar s^g(h_g)\bigr)_{g\in S}$.

Let $h\in\Gal(E(s^g)/E)$ for some $g\in S$, and assume $h\neq1$. Then $\bar s^g(h)\neq 0$ and $\bar s^t(h)=0$ for all $t\in S$ with $t\neq g$. Let $k\in\Gal(E/F)$ and choose an extension $\tilde k\in G_F$ of $k$. Then
\begin{equation} \label{app-eq-1}
\bar s^{\bar kr}\bigl(\tilde kh\tilde k^{-1}\bigr)=\tilde k\bar s^r(h)
\end{equation}
for all $r\in\Gal(K/F)$, where $\bar k:=k|_K$, hence $\tilde kh\tilde k^{-1}\in\Gal\bigl(E(s^{\bar kg})/E\bigr)$. The last group is equal to $\Gal(E(s^u)/E)$ for a unique $u\in S$. Equality \eqref{app-eq-1} for $r=g$ shows that $k$ acts on $\Gal(L/E)$ as
\begin{equation} \label{app-eq-2}
\bar s^u\bigl(\tilde kh\tilde k^{-1}\bigr)=\bigl(\bar s^u(s^{\bar kg})^{-1}\bigr)\tilde k\bar s^g(h),
\end{equation} 
with $h\in\Gal(E(s^g)/E)$ and $g\in S$.

Equality \eqref{app-eq-2} shows that the action of $k\in\Gal(E/F)$ on an element $(\bar s^g(h_g))_{g\in S}\in M^{\#S}$ is essentially given by the action of $\tilde k$ on $\bar s^g(h_g)$, up to the action of the automorphism $\bar s^u(s^{\bar kg})^{-1}$ of $M$ which gives a permutation of the components.

\begin{prop} \label{app-prop-4}
The group $\Gal(L/K)$ is isomorphic to the semidirect product
\[ \Gal(L/K)\simeq M^{\#S}\rtimes\Gal(E/F), \]
where the action of the quotient $\Gal(E/F)$ on the abelian normal subgroup $M^{\#S}$ is explicitly described by \eqref{app-eq-2}.
\end{prop}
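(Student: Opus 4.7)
The plan is to build on Proposition \ref{app-prop-2} and the explicit computations already assembled in the paragraphs immediately preceding the statement. (I read ``$\Gal(L/K)$'' in the proposition as $\Gal(L/F)$: the cardinalities of the two sides match only in this case, and this is how the result is used in the proof of Proposition \ref{existence-admissible-primes}.)

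The first step is to record the short exact sequence
\[
1 \longrightarrow \Gal(L/E) \longrightarrow \Gal(L/F) \longrightarrow \Gal(E/F) \longrightarrow 1,
\]
which is legitimate because $L/F$ is Galois by Proposition \ref{app-prop-3} and $L/E$ is a compositum of the Galois extensions $E(s^g)/E$. The identification $\Gal(L/E)\simeq M^{\#S}$ via $(h_g)_{g\in S}\mapsto(\bar{s}^g(h_g))_{g\in S}$ is already established in the text, using the irreducibility of $M$ to force distinct $E(s^g)$ to be linearly disjoint over $E$. The computation around equation \eqref{app-eq-2} then shows that the conjugation action of $\Gal(E/F)$ on $M^{\#S}$ agrees with the action appearing in the proposition.

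The remaining work, and the main obstacle, is to split the displayed sequence. My approach is to exploit the splittings provided by Proposition \ref{app-prop-2} for each individual extension $E(s^g)/K$ and glue them together: each such splitting yields a complement $H_g\simeq\Gal(E/K)$ inside $\Gal(E(s^g)/K)$, and because the various $H_g$ all project to the same quotient $\Gal(E/K)$, one can choose them coherently so that their ``diagonal'' image in $\prod_{g\in S}\Gal(E(s^g)/K)$ actually lies in $\Gal(L/K)$, producing a section $\Gal(E/K)\hookrightarrow\Gal(L/K)$. To lift further to a section of $\Gal(L/F)\twoheadrightarrow\Gal(E/F)$, I would use the fact that the family $\{s^g\}_{g\in S}$ is $\Gal(K/F)$-stable by the very construction of $S$, together with the transformation rule \eqref{app-eq-1}, to transport the section along lifts of elements of $\Gal(K/F)$ in a compatible fashion.

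The hardest part will be the gluing: verifying that the local splittings coming from Proposition \ref{app-prop-2} are genuinely compatible rather than compatible only up to cocycles. Abstractly this is the vanishing of an obstruction class in $H^2(\Gal(E/F),M^{\#S})$; concretely it should reduce to a direct check using the explicit formula \eqref{app-eq-2} and the cocycle relations satisfied by the $s^g$. Once a section has been produced, the semidirect product decomposition $\Gal(L/F)\simeq M^{\#S}\rtimes\Gal(E/F)$, with action as prescribed, is immediate from the structure theorem for split extensions.
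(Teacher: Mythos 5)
Your reading of the statement as concerning $\Gal(L/F)$ is the right one: the quotient $\Gal(E/F)$, and the way the result is invoked for $\Gal(\tilde M_s/\Q)$ in the proof of Proposition \ref{existence-admissible-primes}, both force it. Up to the identification of the kernel $\Gal(L/E)\simeq M^{\#S}$ and of the conjugation action via \eqref{app-eq-2}, your argument coincides with the paper's, whose entire proof is ``clear from the above discussion''; so the only substantive issue is the splitting of the extension, which you correctly isolate as the missing step but do not close --- and which, it should be said, the paper does not address either.

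Two remarks on your splitting strategy. First, the gluing over $K$ is easier than you fear: by the pairwise linear disjointness of the $E(s^g)$ over $E$, a count of orders shows that $\Gal(L/K)$ \emph{is} the fibre product of the groups $\Gal(E(s^g)/K)$ over $\Gal(E/K)$, so any choice of sections $\Gal(E/K)\to\Gal(E(s^g)/K)$ (which exist by Proposition \ref{app-prop-2}, or more honestly because each $\bar s^g$ lifts to $s^g\in H^1(G_K,M)$, so its transgression to $H^2(\Gal(E/K),M)$ vanishes and, $M$ being irreducible, so does the extension class) assembles into a section of $\Gal(L/K)\to\Gal(E/K)$ with no compatibility condition to check; there is no obstruction at this stage. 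Second --- and this is the genuine gap --- ``transporting the section along lifts of elements of $\Gal(K/F)$'' is not an argument: a homomorphic section over $\Gal(E/K)$ does not extend to one over $\Gal(E/F)$ for free, since the individual $E(s^g)/F$ are not Galois and the classes $s^g$ live over $K$, not over $F$. The clean way to finish is to make your obstruction remark precise: the extension class lies in $H^2(\Gal(E/F),M^{\#S})$, which is killed by $|M|$ (a power of $p$ in the application); its restriction to $\Gal(E/K)$ vanishes by the previous step, so by restriction--corestriction the class is also killed by $[K:F]$. When $[K:F]$ is prime to $|M|$ --- true in the application, where $[H_c:\Q]=2h(c)$ is prime to $p$ by Assumption \ref{ass}, but not among the stated hypotheses of the appendix --- the class vanishes and the sequence splits. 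Without some such coprimality input, neither your sketch nor the paper's ``clear from the above discussion'' actually justifies the semidirect product decomposition.
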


\begin{proof} Clear from the above discussion. \end{proof}

\end{document}